\documentclass[12pt,reqno]{amsart}
\usepackage{amssymb, amsmath, amsfonts, amsthm, graphics}
\usepackage[hmargin=1 in, vmargin = 1 in]{geometry}

\RequirePackage[dvipsnames,usenames]{color}

\usepackage{mathrsfs}
\usepackage{mathtools}
\usepackage{tikz-cd} 
\usetikzlibrary{matrix, calc, arrows} 
\usepackage[hyphens]{url}
\usepackage{breakurl}

\usepackage{hyperref}
\usepackage[all]{xy}
\usepackage{marginnote}

\usepackage[active]{srcltx}
\usepackage{calc,amscd, eucal,ulem,stmaryrd}
\usepackage{alltt}

\usepackage{soul}
\usepackage{hyperref}

\input{kmacros3.sty}
\input{xy}
\xyoption{all}
\usepackage{tikz}
\usetikzlibrary{decorations.markings}
\usetikzlibrary{cd}

\usepackage{mabliautoref}
\usepackage{bm}
\usepackage{comment}
\usepackage[scr=boondox, cal=esstix]{mathalpha}

\newcommand{\isom}{\cong} 

\theoremstyle{definition}
\numberwithin{equation}{section}

\newcommand{\vol}{\mathrm{vol}}
\newcommand{\Amp}{\mathrm{Amp}}

\newcommand{\sumC}{\sum_{m=C_2}^{\frac{C_1p^e}{\|L\|}}}

\newcommand{\CC}{\mathbb{C}}

\newcommand{\NN}{\mathbb{N}}

\newcommand{\QQ}{\mathbb{Q}}
\newcommand{\RR}{\mathbb{R}}

\newcommand{\cC}{\mathcal{C}}
\newcommand{\cD}{\mathcal{D}}

\newcommand{\cF}{\mathcal{F}}
\newcommand{\cG}{\mathcal{G}}

\newcommand{\cL}{\mathcal{L}}

\newcommand{\fm}{\mathfrak{m}}

\newcommand{\fa}{\mathfrak{a}}
\newcommand{\fb}{\mathfrak{b}}

\newcommand{\Fe}{F^{e}_{*}}

\newcommand{\fad}{\mathfrak{a}_{\bullet}}
\newcommand{\tfad}{\tilde{\mathfrak{a}}_\bullet}
\newcommand{\s}{\mathscr{s}}
\newcommand{\FA}{\alpha_{F}}

\newcommand{\tr}{\mathrm{tr}^e}

	\title{The $F$-signature function on the big cone}

    \author{Seungsu Lee}
    \address[S.~Lee]{Department of Mathematics\\University of Michigan\\Ann Arbor, 
		MI 48109-1043\\USA}
    \email{\href{mailto:starlee@umich.edu}{starlee@umich.edu}}
    
    \author{Suchitra Pande}
	\address[S.~Pande]{Department of Mathematics\\University of Utah\\Salt Lake City, 
		UT 84112\\USA}
	\email{\href{mailto:suchitra.pande@utah.edu}{suchitra.pande@utah.edu}}
    \thanks{Pande was partially supported by the NSF grants \#1952399, \#1801697 and \#2101075}
    
\begin{document}

\maketitle
\begin{abstract}
We define and study the $F$-signature of big divisors on projective varieties in positive characteristics. For a normal projective variety $X$, we show that the $F$-signature function defines a continuous function on the big cone of $X$, extending our previous results on the ample cone. By studying the the Frobenius-alpha invariant on the big cone, we prove that the positivity of the $F$-signature of any big $\mathbb{R}$-divisor on $X$ is equivalent to the global $F$-regularity of $X$. As applications, we prove a birational transformation rule for the $F$-signature in terms of the $F$-signature of certain graded ideal sequences. We also prove that in a reduction modulo $p$ situation, a characteristic independent lower bound for the $F$-signature of $X$ with respect to any ample divisor $L$ spreads out to a similar bound for arbitrary big divisors on $X$.
\end{abstract}

\section{Introduction}
The $F$-signature is a numerical measurement of singularities in positive characteristics via the asymptotic flatness of the Frobenius map. For an $F$-finite local domain $R$ of positive characteristic $p$, the $F$-signature is the limiting percentage of free summands (as an $R$-module) in $R^{1/p^e}$ as $e \to \infty$. This invariant was formalized in \cite{SmithVanDenBerghSimplicityOfDiff,HunekeLeuschkeTwoTheoremsAboutMaximal} and the defining limit was shown to exist in \cite{TuckerFSigExists}. Furthermore, \cite{AberbachLeuschke} characterized strongly $F$-regular rings in terms of the positivity of their $F$-signatures. The numerical value of the $F$-signature provides subtle information about strongly $F$-regular singularities, including bounds on the sizes of the local fundamental groups \cite{carvajal-rojas_fundamental_2016}, divisor class groups \cite{PolstramaximalCM, MartinTorsionDivisors} and their local volumes.

Globally, the $F$-signature provides an interesting invariant of a pair $(X,L)$ consisting of a projective variety $X$ and an ample divisor $L$ over $X$ by considering the singularities of the section ring $S(X,L):= \bigoplus_{m\in \mathbb{Z}}H^0(X,\cO_X(mL))$. The $F$-signature (at the homogeneous maximal ideal) of $S(X,L)$ is a subtle variant of the volume of $L$ and helps detect the global $F$-regularity of $X$ (which in turn implies the Kodaira vanishing theorem for $X$).
Inspired by this analogy with the volume, our previous work \cite{LeePandeFsigAmpCone} defined and studied an $F$-signature \emph{function} $\s_X: \Amp (X) \to \mathbb{R}$ on the ample cone of $X$, that assigns: to any ample class $L$, the $F$-signature of its section ring $S(X,L)$; and satisfies the scaling property $s_X (\lambda L) = \frac{s_X (L)}{\lambda}$ for any $\lambda >0$. Moreover, we showed that the $F$-signature function is locally Lipschitz continuous at all real ample divisors and extends continuously to the non-zero part of the boundary of $\Amp (X)$.

\subsection{The $F$-signature of big divisors} The goal of this paper is to extend the $F$-signature function from the ample cone to the big cone of a normal projective variety $X$. There are two key difficulties in this extension:
\begin{itemize}
    \item The section ring $S(X,L)$ with respect to a big divisor $L$ may not be finitely generated, and thus, previous results on the existence of the defining limit \cite{TuckerFSigExists, PolstraTuckerCombinedApproach} do not apply. We note though that the finite generation of $S(X, L)$ is expected to hold for big divisors on globally $F$-regular varieties (see \cite{DattaSchwedeTuckerFiniteGenerationofSplitFregularMonoidAlgebras}).
    \item The section ring of a big divisor $L$ is invariant under pulling back along a proper birational map $\pi : Y \to X$. So, even when the section $S(X,L)$ is finitely generated, its $F$-signature is not intrinsic to $X$.
\end{itemize}

We address both these issues by exploiting the grading on $S(X,L)$ and measuring Frobenius splittings of the linear systems $H^0 (X, \cO_X(mL))$ individually instead of the entire section ring. More precisely, we define:
\begin{dfn}
    The $F$-signature of a big Cartier divisor $L$ on a normal projective variety $X$ over an algebraically closed field of characteristic $p>0$ is defined to be the limit:
    \[s_X(L):= \lim_{e\to\infty} \frac{1}{p^{e{(\dim X+1)}}}\sum_{m=0}^\infty\dim_k\frac{H^0(X, \cO_X (mL))}{I_e(mL)},\]
    where the Frobenius-splitting subspaces $I_e (mL)$ are defined as 
    \[I_e(mL):= \{f\in H^0(X,\cO_X (mL))\mid \varphi(F^e_*f) = 0 \text{ for all } \varphi \in  \Hom_{\cO_X}(F^e_*( \cO_X(mL)),\cO_X)) \}.\]
\end{dfn}

When $L$ is ample, $\s_X(L)$ equals the $F$-signature of the corresponding section ring $S(X, L)$ as showed in \cite[Lemma 4.6]{LeePandeFsigAmpCone}. With this definition, we are able to continuously extend the $F$-signature function to the big cone of $X$:


\begin{theoremA*}
Let $X$ be a normal projective variety over an algebraically closed field of positive characteristic and $L$ be a big Cartier divisor on $X$. Then,
\begin{enumerate}
\item \textnormal{(\autoref{MainthmA})} The limit defining $s_X(L)$ exists.
\item \textnormal{(\autoref{Pn:TransformationRuleFsig})}  We have a transformation rule $s_X(L^n)= \frac{1}{n}s_X(L)$ for any $n \geq 1$.
\item \textnormal{(\autoref{thm:FsigLocallyLipschitz})} The assignment $L \mapsto \s_X (L)$ extends to a well-defined, locally Lipschitz continuous function on the (real) big cone of $X$.

\end{enumerate}
\end{theoremA*}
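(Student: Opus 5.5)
Part (1) is the heart of the matter, and I would reduce it to a volume-type computation through a uniform approximation. Write $a_e(m):=\dim_k H^0(X,\cO_X(mL))/I_e(mL)$ and $S_e:=p^{-e(d+1)}\sum_m a_e(m)$, where $d=\dim X$.

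\emph{Truncation.} First I would show that only $m\le Cp^e$ contributes. Fix an ample $A$ and an effective $E$ with $L\sim A+E$ (Kodaira's lemma). Let $H$ be sufficiently ample. Once $m\|L\|\gg p^e$, the divisor $mL-p^e(\cdots)$ is not effective against any global map $F^e_*\cO_X(mL)\to\cO_X$. Indeed, such maps correspond to sections of $\cO_X((1-p^e)K_X-mL)$, and this sheaf has no sections when $m>Cp^e$, since $L$ is big and $((1-p^e)K_X-mL)\cdot H^{d-1}<0$. So $a_e(m)=0$ for $m>Cp^e$. Together with $a_e(m)\le h^0(mL)=O(m^d)$, this gives $S_e=O(1)$.

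\emph{Monotonicity and an ample-type comparison.} The standard approach (Tucker/Polstra–Tucker) compares $S_{e+1}$ with $S_e$ through $I_{e+1}(mL)\supseteq$ (things generated by $I_e$ via Frobenius). I would instead use the graded-family formalism. The sets $I_e(mL)$ form, for each $e$, a graded system under multiplication of sections: if $f\in I_e(mL)$ and $g\in H^0(nL)$, then $fg\in I_e((m+n)L)$. This holds because any map $\varphi$ precomposed with multiplication by $g$ is again such a map. Set $\mathfrak{a}^{(e)}$ to be the scaled family $\{I_e(mL)\}$. I would then show that the limit equals the limit of $\frac{1}{p^{e(d+1)}}\sum_m$ of a codimension count. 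This count can be computed on a Okounkov body: as in Lazarsfeld–Mustaţă, choose an admissible flag and record the valuation vectors of $H^0(mL)\setminus I_e(mL)$. These lie in $\Delta(L)\times[0,C]$ scaled by $p^e$, and they form the complement of a semigroup ideal. The key step is to show that the normalized point sets $\frac{1}{p^e}\nu(\cdots)$ converge. I would try to show that $p\cdot\nu(I_e)\subseteq\nu(I_{e+1})$ in a suitable sense, which gives convex bodies $K_e$ that are increasing after rescaling. The sum then becomes a Riemann sum for the volume of the limiting region, and it converges by the usual lattice-point counting. I expect this step to be the main obstacle, because Frobenius-compatibility of the valuation is delicate. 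Failing that, I would fall back on the Polstra–Tucker uniform estimate $|S_{e+1}-S_e|\le C p^{-e}$. This is obtained by pulling back via a resolution $\pi:Y\to X$ with $\pi^*L=A'+E'$, where $A'$ is ample, and comparing with the ample case from \cite{LeePandeFsigAmpCone} up to an error controlled by $h^0(mL)-h^0(m A')$ sums weighted by $E'$.

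\emph{Parts (2) and (3).} Part (2) follows formally. Using $I_e(mnL)$, the sum for $nL$ is a subsum of the one for $L$, over multiples of $n$. I would show that each residue class contributes asymptotically equally. The comparison comes from multiplying by a fixed section of $rL$, $0\le r<n$, in both directions, which shifts $m$ by $O(1)$ while changing $S_e$ by $o(1)$. This yields $s_X(nL)=s_X(L)/n$; as stated, "$L^n$" means $nL$. Consequently $s_X$ extends to $\QQ$-divisors by homogeneity. For (3), I would prove a Lipschitz estimate: for big $L$ and small $D$ with $\|D\|<\varepsilon$, write $L\pm D$ and use $L\ge(1-\delta)(L+D)+\text{effective}$ with $\delta=O(\|D\|)$ on an open subcone. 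Monotonicity under adding effective divisors, up to the scaling, gives $|s(L+D)-s(L)|\le C\|D\|$ locally. The constant is uniform on compact subsets of the big cone, since the truncation constant and the volumes are uniform there. Density of $\QQ$-classes then gives the unique continuous extension to the real big cone. Well-definedness on numerical classes follows by the same argument applied to numerically trivial $D$, which gives difference $0$.
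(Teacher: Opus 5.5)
Your truncation step in (1) is correct, and (2) works. Your cyclic comparison only needs the easy inequality $a_e(m+r)\le a_e(m)+h^0((m+r)L)-h^0(mL)$, which follows from $g\cdot I_e(mL)\subseteq I_e((m+r)L)$ for $0\neq g\in H^0(rL)$. Using shifts $r$ and $n-r$ then compares any two residue classes. This is lighter than the paper, which uses the two-sided twisting estimate (the bound $|a_e(mL)-a_e(mL+D)|=O(p^{e(d-1)})$). One detail: $H^0(rL)$ may vanish for small $r$, so shift by $r+kn$ with $k\gg 0$ instead.

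The genuine gap is the recursion in (1), which neither of your routes delivers.
\begin{itemize}
\item In the Okounkov route, the inclusion you hope for is immediate: $f\in I_e(mL)$ gives $f^p\in I_{e+1}(pmL)$, so $p\,\nu(I_e(mL))\subseteq \nu(I_{e+1}(pmL))$. But the split sets are not convex bodies; they are semigroup ideals of $\Gamma(L)$. Getting from ``rescaled split sets increase'' to convergence of the normalized lattice counts of their complements needs a real argument controlling the region near the boundary of the cone, where $\Gamma(L)$ need not contain all lattice points. That is the whole content of the step, and you leave it open.
\item The fallback fails. The spaces $I_e$ are not birational invariants: the paper's transformation rule shows $I^Y_e(\pi^*D)$ corresponds to $I_e^{\mathfrak{a}_\bullet}(D)$ on $X$, with $\mathfrak{a}_m=\pi_*\mathcal{O}_Y(\lfloor -mK_{Y/X}\rfloor)$ typically nontrivial, so $s_Y(\pi^*L)$ is not $s_X(L)$. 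Resolutions are also unavailable in characteristic $p$. Finally, an error of size $\sum_{m\le Cp^e}(h^0(mL)-h^0(mA'))$ is of order $(\mathrm{vol}(L)-\mathrm{vol}(A'))p^{e(d+1)}$, not $O(p^{ed})$.
\end{itemize}
The missing idea is a Frobenius degree-lowering comparison made directly on $X$. The paper takes injections $F_*\mathcal{O}_X(jL)\hookrightarrow \mathcal{O}_X(D)^{\oplus p^d}$ for $0\le j<p$, with cokernels supported in dimension at most $d-1$. This yields $p^d\, a_e(D+\lfloor m/p\rfloor L)\le a_{e+1}(mL)+O(p^{e(d-1)})$. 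The twist by $D$ is then removed with the twisting estimate, whose nontrivial ($J_e/I_e$) direction needs Frobenius duality and restriction to a normal divisor.

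Part (3) rests on a false claim: $s_X$ is not monotone under adding effective divisors, in either direction. Adding $\varepsilon L$ lowers it by scaling. But on $\mathbb{P}^1\times\mathbb{P}^1$, $s(\mathcal{O}(1,\delta))\le C\,\mathrm{vol}(\mathcal{O}(1,\delta))/\|\mathcal{O}(1,\delta)\|^3\to 0$, while $s(\mathcal{O}(1,1+\delta))$ stays bounded away from $0$; so adding $\mathcal{O}(0,1)$ raises it. Monotonicity is exactly what $\alpha_F$ has and $s_X$ lacks; your scaling trick is the paper's argument for $\alpha_F$.

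The step can be repaired with quasi-monotonicity. From $\sigma_E^m I_e(mL)\subseteq I_e(m(L+E))$ you get $a^{L+E}_e(m)\le a^L_e(m)+h^0(m(L+E))-h^0(mL)$. Summing up to the uniform truncation bound gives $s_X(L+E)\le s_X(L)+c\,(\mathrm{vol}(L+E)-\mathrm{vol}(L))$, with $c$ uniform on compact sets. Feeding this into your decomposition $L=(1-\delta)(L+D)+F$, together with local Lipschitz continuity of $\mathrm{vol}$ and the bound $s_X\le C\,\mathrm{vol}/\|\cdot\|^{d+1}$, gives the local Lipschitz estimate.
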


\subsection{Positivity and comparison to the volume function} In the case of ample divisors, the $F$-signature $s_X(L)$ is positive for some (equivalently, any) ample $L$ if and only if $X$ is globally $F$-regular \cite{SchwedeSmithLogFanoVsGloballyFRegular}. The same statement is true for the $F$-signature of big divisors as well:
\begin{theoremB*}[\autoref{thm:positivityofFsigbig}]
Let $X$ be a normal projective variety over an algebraically closed field of positive characteristic. Then, $X$ is globally $F$-regular (\autoref{dfn:globalFregularity}) if and only if the $F$-signature $s_X (L)$ with respect to some (equivalently, any) big Cartier divisor $L$ is positive.  
\end{theoremB*}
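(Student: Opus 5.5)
We prove the two implications separately, using the ample case \cite{SchwedeSmithLogFanoVsGloballyFRegular} (via \cite[Lemma 4.6]{LeePandeFsigAmpCone}) as the anchor. Throughout, $L$ is a big Cartier divisor. By Kodaira's lemma we choose $n \geq 1$, an ample Cartier divisor $A$ and an effective Cartier divisor $E$ with $nL \sim A + E$. By Theorem A(2), $s_X(L)=n\,s_X(nL)$, so $s_X(L)>0$ if and only if $s_X(nL)>0$. We may therefore assume $L = A+E$ and compare the splitting subspaces $I_e(mL)$ with $I_e(mA)$.

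\emph{Positivity implies global $F$-regularity.} Suppose $s_X(L)>0$. Then for infinitely many $e$ there is some $m$ with $H^0(X,\cO_X(mL))\neq I_e(mL)$; otherwise the sums in the definition of $s_X(L)$ would vanish. In particular, for such $e$ there is a section $f$ and a map $\varphi$ with $\varphi(F^e_*f)\neq 0$, and a rescaling argument on the $F$-signature sums should give us large $e$ and $m \geq 1$ with this property. Twisting by the divisor $D=\operatorname{div}(f)+mL\ge 0$, the map $\cO_X\to F^e_*\cO_X(D)$ splits after composing with $\varphi$. We want to upgrade this to global $F$-regularity, meaning that for every effective $D'$ some $\cO_X\to F^{e'}_*\cO_X(D')$ splits. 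Our plan is to show that positivity forces the number of $m$ admitting nonzero splittings to grow like $p^e$. Since $L$ is big, $mL - D'$ is effective for $m\gg0$, and for such $m$ a section $g$ of $mL$ vanishing along $D'$ lies outside $I_e(mL)$. This yields splittings of $\cO_X \to F^e_*\cO_X(D')$. A cleaner alternative is to compare with $A$: since $E\ge 0$, multiplication by the section defining $mE$ should send $H^0(mA)/I_e(mA)$ injectively to $H^0(mL)/I_e(mL)$, and conversely to bound $s_X(L)$ from above by a multiple of $s_X$ of an ample divisor of the form $A+\epsilon$-twists. We will then invoke \cite{SchwedeSmithLogFanoVsGloballyFRegular}, which says $X$ is globally $F$-regular iff the ample $F$-signature is positive.

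\emph{Global $F$-regularity implies positivity.} Suppose $X$ is globally $F$-regular. By global $F$-regularity there is an $e_0$ and a map $\psi:F^{e_0}_*\cO_X(E)\to\cO_X$ sending the section $s_E$ to $1$. Given $f\in H^0(mA)\setminus I_e(mA)$ and a witnessing $\varphi$, we compose to show $f\cdot s_E^{\,?}\notin I_{e+e_0}(\cdot)$. Choosing the exponents so that the degrees match, this embeds the quotients $H^0(mA)/I_e(mA)$ into quotients for $L$ with index shifted by a bounded amount. Summing over $m$ and dividing by $p^{e(\dim X+1)}$ gives $s_X(L)\ge c\,s_X(A)$ for some constant $c>0$ depending on $p^{e_0}$, and $s_X(A)>0$ by \cite{SchwedeSmithLogFanoVsGloballyFRegular}. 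The same argument run with $L$ in place of $A$ handles the "some" versus "any" equivalence.

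The main obstacle is the first implication: extracting a global splitting along an arbitrary divisor from mere positivity of an asymptotic count. We expect to resolve it through the Frobenius-alpha invariant mentioned in the abstract. Concretely, we would show that $s_X(L)>0$ forces the proportion of degrees $m\le Cp^e$ with $I_e(mL)\neq H^0(mL)$ to be bounded below, and then use that a nonvanishing splitting in degree $m$ with $m/p^e$ bounded away from zero produces a splitting of $\cO_X\to F^e_*\cO_X(\epsilon p^e L)$. Since $L$ is big, this dominates any fixed effective divisor.
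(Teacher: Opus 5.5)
\textbf{The converse (positivity $\Rightarrow$ global $F$-regularity) has a genuine gap.} The forward direction is a workable alternative to the paper's route, after two fixes.

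\emph{Forward direction.} The paper proves ``globally $F$-regular $\Rightarrow \s_X(L)>0$'' using $\FA(X,H)>0$ for ample $H$, the monotonicity \autoref{Prop:addingeffective}, and the bound $\s_X(L)\ge\vol(L)\FA(X,L)^{d+1}/(d+1)!$. Your direct comparison also works, but two details need repair.
First, composing $\psi$ with the twist of $\varphi$ by $\cO_X(E)$ shows $f s_E^{p^e}\notin I_{e+e_0}(mA+p^eE)$. Hence you get $fs_E^m\notin I_{e+e_0}(mL)$ only for $m\le p^e$. But $I_e(mA)\neq H^0(mA)$ can occur for $m$ up to $C_1p^e$. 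So split $aE$ for an integer $a\ge C_1$ instead; global $F$-regularity gives such a splitting at some level $e_1$.
Second, $I_e(mA)\cdot s_E^m$ is only known to lie in $I_e(mL)\supseteq I_{e+e_1}(mL)$, so you do not get a well-defined map of quotients. Instead, observe that the kernel of $f\mapsto fs_E^m$, $H^0(mA)\to H^0(mL)/I_{e+e_1}(mL)$, lies in $I_e(mA)$.
Summing over $m$ then gives $\s_X(A)\le p^{e_1(d+1)}\s_X(L)$. This is more elementary than the paper's argument, but it does not yield the volume--$\FA$ bound of Theorem C.

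\emph{The gap in the converse.} You assert that ``a section $g$ of $mL$ vanishing along $D'$ lies outside $I_e(mL)$'', but nothing derives it, and your proposed resolution cannot supply it. Knowing that $I_e(mL)\neq H^0(mL)$ for many degrees $m\ge\epsilon p^e$ only gives, in each such degree, \emph{some} section $f\notin I_e(mL)$. That is a splitting along the single divisor $\operatorname{div}(f)\in|mL|$, which you do not control. It says nothing about an effective $D'\not\le\operatorname{div}(f)$, so there is no ``splitting of $\cO_X\to F^e_*\cO_X(\epsilon p^eL)$'' that dominates every $D'$.
The ``cleaner alternative'' does not help either. Multiplication by $s_E^m$ compares $\s_X(A)$ and $\s_X(L)$ in the direction relevant only to the forward implication, and nothing in the proposal bounds $\s_X(L)$ above by an ample $F$-signature.

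\emph{What is missing} is control of all sections, or of a subspace of small codimension, in a single degree. The paper obtains this from the upper bound in \autoref{postivitylemma}. A nonzero $f\in I_{e_0}(nL)$ gives $f^{v_r}\in I_{re_0}(nv_rL)$ with $v_r=1+p^{e_0}+\dots+p^{(r-1)e_0}$. Hence $f^{v_r}H^0((m-nv_r)L)\subseteq I_{re_0}(mL)$, which forces $\s_X(L)\le\frac{\vol(L)}{(d+1)!}\big(C^{d+1}-(C-\tfrac{n}{p^{e_0}-1})^{d+1}\big)$. Thus $\s_X(L)>0$ implies $\FA(X,L)>0$. Writing $nL=H+E$ with $H$ ample, monotonicity gives $\FA(X,H)\ge\FA(X,L)/n>0$, so $\s_X(H)>0$, and the ample case applies.

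Alternatively, a single-degree dimension count repairs your own plan. For $e\gg0$, $\s_X(L)>0$ yields some $m\le C_0p^e$ with $\dim_k H^0(mL)/I_e(mL)\ge cp^{ed}$. Choose an effective Cartier $G\ge D'$. The subspace $s_G\cdot H^0(mL-G)$ has codimension at most $h^0(G,\cO_G(mL))=O(p^{e(d-1)})$, so it cannot be contained in $I_e(mL)$. Then \autoref{lem:testdivisor}(a) gives a splitting along $G$, hence along $D'\le G$.
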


 To study the positivity of $s_X(L)$, we study the Frobenius-alpha invariant (\autoref{Falphadfn}) on the big cone. The $\FA$-invariant is a positive characteristic version of Tian's alpha invariant, first defined in \cite{PandeFrobeniusVersionTiansAlphaInvariant}. We extend the theory of the $\FA$-invariant to big divisors in \autoref{Section:AlphaInvariant} and apply its continuity properties to the $F$-signature. The advantage of working with the $\FA$-invariant is due to its monotonicity properties on the big cone (\autoref{Prop:addingeffective}).
 Using this, we are able to establish effective comparisons between the $F$-signature function and the volume function.

\begin{theoremC*} \label{thm:theoremC}
Let $X$ be a normal projective variety over $k$. Then, \begin{enumerate}

\item\textnormal{(\autoref{Prop:Fsigandalphacomparison})} Let $L$ be a big Cartier divisor on $X$. We have the following comparison between the $F$-signature, volume function and the $\FA$-invariant:\[ \s_X(L) \geq \frac{\vol(L) \, \FA(X, L)^{d+1}}{(d+1)!}.\]
\item\textnormal{(\autoref{thm:positivityforbig})} Suppose further that $X$ is globally $F$-regular. Then, for any big $\bR$-divisor $L$, the $F$-signature $s_X(L)$ is positive. Moreover, for any compact subset $\kappa$ that is contained in the big cone, there exists a constant $C>0$ (depending on $\kappa$) such that $\s_X(L) \geq C \vol(L)$ for all $L  \in \kappa$.
\end{enumerate}
\end{theoremC*}

We note that Part (b) of Theorem C is new even for ample $\RR$-divisors.

\subsection{Transformation rule under birational maps} As noted earlier, the section ring $S(X, L)$ remains invariant when $L$ is pulled-back along a proper and birational map $\pi: Y \to X$. However, our definition of the $I_e$ subspaces are intrinsic to $X$ and naturally track changes in the birational models of $X$. To state the precise results, we extend the notion of the $F$-signature to the context of \emph{graded sequences of coherent ideals} of $\cO_X$ (\autoref{def:fsigofidealsequence}). We develop the theory of global $F$-regularity and the $F$-signature in this context using the formalism of \emph{Cartier algebras} (\autoref{def:Cartiersubalgebraassociatedtogradedsequence}).

\begin{theoremD*} [\autoref{Thm:transformationRuleProperBirational}]  \label{thm:theoremD}
   Let $X$ be a globally $F$-regular projective variety and $L$ be a big and semi-ample divisor over $X$. Suppose $\pi: X \to Y$ is the birational contraction defined by $|mL|$ for some $ m \gg 0$ sufficiently divisible and $L_0$ denote the ample divisor on $Y$ such that $\pi ^* L_0 \sim L$. Then, we have
    \begin{enumerate}
        \item Suppose $K_Y $ is $\QQ$-Cartier and $K_{Y|X} = K_Y - \pi^* K_X \leq 0$. Then,  $\s_X (L) = \s_Y (L_0) $.

        \item Suppose $K_Y $ is $\QQ$-Cartier and we have $K_{Y|X} = E \geq 0$ for some effective $\QQ$-divisor $E$. Then, we have a natural graded sequence of ideal sheaves over $Y$ defined by
        \[ \fa_m := \pi_* \cO_X (\lceil -mE \rceil ).  \]
        Moreover, we have the following transformation rule for the $F$-signature:
        \[ \s_X (L) = \s_Y (\fad, L_0). \]
        
        \item For any effective $\QQ$-divisor $\Delta_X $ such that $(X, \Delta_X)$ is globally $F$-regular and $-K_X - \Delta_X  \sim _\QQ L$, we have:
        \begin{itemize}
            \item setting $\Delta_Y = \pi_* \Delta_X $, the pair $(Y, \Delta_Y)$ is globally $F$-regular.

            \item the divisor $- K_Y - \Delta_Y$ is $\QQ$-Cartier and $ - K_Y - \Delta_Y \sim _\QQ L_0 $.

            \item We have 
            \[ \s_X (\Delta_X, L) = \s_Y (\Delta_Y, L_0).\]
        \end{itemize}
        
    \end{enumerate}
\end{theoremD*}

This theorem is motivated by several explicit computations in \cite{VonKorffthesis, LeePandeFsigAmpCone, LeePandeSimpsonHilbertkunzmultiplicityfsignaturedisagree} suggesting that when we blow-up a smooth projective variety $X$ along a smooth center $Z \subset X$, then the $F$-signature of the pull-back of any ample divisor on $X$ measures the singularities of the pair $(X, Z)$ (see also \cite{LakshmibaiFrobSplittingsandBlowups}). Theorem D is a generalization of this observation to more general birational maps.


\subsection{Application to the positivity of limit $F$-signature.} There has been a recent surge of interest in the studying the behavior of the $F$-signature under the reduction mod $p$ process; see \cite{CaminataShidelerTuckerZermanFsignaturefunctionsdiagonalhypersurfaces,TakagiYamaguchiUniformPositivityFsignatureReduction,BrosowskyCoskunPandeTuckerlimitfsignaturefunctionstwovariable,LiuPandepositivitylimitfsignature}. However, the invariant remains difficult to compute in general. As an application of the theory we developed above, we prove a uniform spreading-out principle for lower bounds of $F$-signature function: if the $F$-signature admits a characteristic–independent lower bound for one big divisor, then analogous characteristic–independent lower bounds exist for every big divisor.

\begin{theoremE*}[\autoref{thm:LowerBoundsSpreadout}]
  Let $X$ be a projective variety of globally $F$-regular type over $\CC$. Suppose we have an ample divisor $L$ and a constant $C:= C(L)$ such that for all $p \gg 0$, we have the lower bound $ \s (X_s, L_s) \geq C $ for all $p \gg 0$. Then, for any ample divisor $L'$ on $X$, after possibly enlarging $A$, there exists a constant $C' := C' (L')$ such that $\s (X_s, L'_s) \geq C'$ for all closed points $s \in \Spec (A)$ of characteristic $p$ and all $p \gg 0$.
\end{theoremE*}

\subsection*{Organization of the paper} 
\autoref{section:Fsigofbig} is dedicated to the detailed study of the $F$-signature of big divisors and the $F$-signature function on the big cone. We study the $\FA$-invariant in \autoref{section:alphainvarinat} and apply it to the positivity of the $F$-signature. We extend the theory of global $F$-regularity and the $F$-signature to graded sequences of ideals in \autoref{section:gradedsequences}. In \autoref{section:pairsandtriples} we make some remarks about extensions of the $F$-signature function to the case of pairs $(X, \Delta)$ where $\Delta$ is a $\QQ$-divisor on $X$, and in addition, to the case of triples $(X, \Delta, \fad)$ where $\fad$ denotes a graded sequence of coherent ideals over $X$. We apply these extensions to prove a comprehensive transformation rule for the $F$-signature under proper and birational maps in \autoref{section:transformationrule}. Finally, the application to the positivity of the limit $F$-signature is presented in \autoref{section:positivity}.
 
{\bf Acknowledgements:} We would like to thank Anna Brosowsky, Benjamin Baily, Sung Rak Choi, Havi Eller, Yuchen Liu, Karl Schwede, Austyn Simpson, Karen Smith, Olivia Strahan, Shunsuke Takagi and Kevin Tucker for valuable suggestions and discussions. We also thank the mathematics departments at the University of Utah and the University of Michigan for their hospitality and support during the various visits by the authors.

\section{Preliminaries}
In this section, we discuss the fundamental backgrounds we used in later chapters. We begin with divisors and the volume function on divisors that describe the asymptotic growth of spaces of sections of associated invertible sheaves. We then recall the relation between invertible sheaves and section modules on a section ring of projective varieties. Finally, we recall the global $F$-regularity and the $F$-signature function of ample divisors.

\subsection{Divisors and Cones}
Let $X$ be a normal projective variety over an algebraically closed field $k$. Let $N^1(X)$ denote the N\'eron-Severi space of $X$ defined as the vector space of Cartier divisors modulo numerical equivalence. We denote $N^1(X)_\bQ := N^1(X)\otimes \bQ$ and $N^1(X)_\bR := N^1(X)\otimes \bR$ for the space of $\bQ$- and $\bR$-classes respectively. Throughout the paper, we denote a divisor $D$ by which we mean a Cartier divisor on $X$ unless otherwise specified.

Now, we recall ample cone and pseudo-effective cone, which are going to be the domain of the $F$-signature and the Frobenius alpha invariant that we will discuss in later chapters. 

\begin{dfn}\cite{LazarsfeldPositivity1}
The \emph{ample cone} $\Amp(X)\subseteq N^1(X)_\bR$ is the open convex cone of all ample $\mathbb{R}$-divisor classes. The \emph{Nef cone} $\mathrm{Nef}(X)$ is the closure of the ample cone consisting of numerically effective $\mathbb{R}$-divisor classes. We denote the \emph{cone of effective $\bR$-divisors} by $\mathrm{Eff}(X)$ and its closure $\overline{\mathrm{Eff}}(X)$ is called the \emph{pseudo-effective cone}.
\end{dfn}

\begin{dfn}
A divisor $D$ is $\emph{big}$ if $D$ is in the interior of the pseudo-effective cone. 
\end{dfn}

The set of all big divisor classes forms the \emph{big cone} $\mathrm{Big}(X) \subseteq N^1(X)_\mathbb{R}$. Recall that the big cone is an open convex cone containing $\Amp(X)$. Throughout the paper, we will denote $\mathrm{Big}_\bQ(X):= \mathrm{Big}(X) \cap N^1(X)_\bQ$, $\Amp_\bQ(X):= \mathrm{Amp}(X)\cap N^1(X)_\bQ$ be corresponding subcones of $N^1(X)_\bQ$ consisting of rational divisor classes.

\subsection{Volume function on big divisors.}

Let $X$ be a normal projective variety over $k$ and $L$ be an invertible sheaf on $X$. Recall that the volume of $L$ is defined as the limit
\[ \vol(L) := \lim_{m \to \infty} \frac{\dim_k  H^0 (X, L^m) }{m^d /d!},  \]
where $d$ denotes the dimension of $X$. This limit exists by \cite{LazarsfeldMustataConvexbodies}. The volume of a divisor $D$ is the volume of the corresponding invertible sheaf $\vol(\cO_X(D))$. Now, for any $n\in \mathbb{N}$, $\vol(nD) = n^d\vol(D)$. For a $\bQ$-Cartier divisor $D$, choose $r>0$ such that $rD$ is Cartier. Then, $\vol(D):= \vol(rD)/r^d$. For more details, see \cite{LazarsfeldPositivity1}. Throughout the paper, we will denote $\vol(D)$ for the volume of the divisor. Note that $D$ is big if and only if $\vol(D)>0$.

\begin{lem} \label{lem:sumvol}
  Let $X$ be a $d$-dimensional normal projective variety $L$ be an invertible sheaf on $X$. Let $P(N)$ denote the function $\sum _{m = 0 } ^N  \dim_k H^0 (X, L^m)$. Then we have
  \[ \lim_{N \to \infty} \frac{P(N)}{N^{d+1}/(d+1)!} = \vol(L).  \]
\end{lem}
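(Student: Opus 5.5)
This is a Cesàro-type averaging statement: if $a_m := \dim_k H^0(X, L^m)$ satisfies $a_m/(m^d/d!) \to \vol(L)$, then the partial sums $\sum_{m\le N} a_m$ grow like $\vol(L)\, N^{d+1}/(d+1)!$. The plan is to compare $P(N)$ directly with $\sum_{m\le N} \vol(L)\, m^d/d!$, using the existence of the volume limit from \cite{LazarsfeldMustataConvexbodies}, which defines $\vol(L)$ for any invertible sheaf. No bigness assumption is needed; if $\vol(L)=0$ the same argument gives limit $0$.

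Fix $\epsilon>0$. By the definition of the volume as a limit, there is $M$ such that
\[ \left| a_m - \vol(L)\frac{m^d}{d!}\right| \leq \epsilon \frac{m^d}{d!} \quad \text{for all } m \geq M.\]
Split $P(N)=\sum_{m=0}^{M-1} a_m + \sum_{m=M}^N a_m$. The first sum is a constant $C_M$ independent of $N$, so $C_M/N^{d+1}\to 0$. For the second sum, the displayed estimate gives
\[ \sum_{m=M}^{N} a_m = \frac{\vol(L)}{d!}\sum_{m=M}^N m^d + \theta_N \cdot \frac{\epsilon}{d!}\sum_{m=M}^N m^d \quad \text{with } |\theta_N|\le 1. \]

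Next use the elementary asymptotic $\sum_{m=0}^N m^d = \frac{N^{d+1}}{d+1} + O(N^d)$, obtained for instance by comparing with $\int_0^N t^d\,dt$ and $\int_1^{N+1} t^d\,dt$. Removing the finitely many terms with $m<M$ changes the sum only by a constant. Dividing by $N^{d+1}/(d+1)!$ therefore gives
\[ \limsup_{N\to\infty}\left|\frac{P(N)}{N^{d+1}/(d+1)!} - \vol(L)\right| \leq \epsilon, \]
and since $\epsilon$ is arbitrary the stated limit follows.

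There is no real obstacle. The only point needing care is that the error term is relative: it is $\epsilon m^d$ rather than $\epsilon$. Summing it yields an error of order $\epsilon N^{d+1}$, which is still negligible after normalization. The finitely many small values of $m$, where the $a_m$ could behave arbitrarily (for example be $0$ when $L$ has no sections), contribute only a bounded amount.
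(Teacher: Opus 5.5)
Your argument is correct and is essentially the paper's proof. The paper also fixes $\varepsilon$ and chooses $M$ so that $\dim_k H^0(X,L^m)$ lies within $\frac{\varepsilon}{2}\frac{m^d}{d!}$ of $\vol(L)\frac{m^d}{d!}$ for $m\geq M$. It then absorbs the finite head $P(M)$ into an $\varepsilon N^{d+1}$ term and compares the tail with $\sum m^d = \frac{N^{d+1}}{d+1}+O(N^d)$; your $\theta_N$ bookkeeping just combines the paper's separate upper and lower estimates into one step.
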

\begin{proof}
    Fix an $\varepsilon >0$. Then, by the definition of the volume function, there exists a constant $M > 0$ (depending on $\varepsilon$) such that for any $m\geq M$,
    \[\left(\vol(L)-  \frac{\varepsilon}{2}\right) \frac{m^d}{d!}  < \dim_k H^0 (X, L^m) < \left(\vol(L)+ \frac{\varepsilon}{2}\right) \frac{m^d}{d!} .\]
    Now, we take $N_0 \in \mathbb{N}$ with $M< N_0$ a sufficiently large number such that $P(M) \leq \frac{\varepsilon}{2} N_0^{d+1} /(d+1)!$. Now, for any $N\geq  N_0$, \[\begin{split} P(N)&=P(M) + \sum_{i=M+1}^N \dim_k H^0(X,L^i) \\ & < \frac{\epsilon N^{d+1}}{2(d+1)!} + \sum_{i=M+1}^N \left(\vol(L) +\frac{\epsilon}{2}\right)\frac{i^d}{d!} \\ &< \frac{\epsilon N^{d+1}}{2(d+1)!} + \left(\vol(L) +\frac{\epsilon}{2}\right) \frac{N^{d+1}}{(d+1)!} +O(N^d)\\ &= (\vol (L) + \epsilon) \frac{N^{d+1}}{(d+1)!}+O(N^d).\end{split}\]
    With a similar argument as above, one obtains a lower bound as follows: \[P(N)>(\vol(L) -\epsilon) \frac{N^{d+1}}{(d+1)!}+O(N^d).\] Therefore, for all $N \gg 0$, we have
    \[ (\vol(L) - \varepsilon) \frac{N^{d+1}}{(d+1)!} +O(N^d) < P (N) <  (\vol(L) + \varepsilon) \frac{N^{d+1}}{(d+1)!}+O(N^d).  \]
    This implies that we have
    \[ \vol(L) - \varepsilon \leq \liminf_{N \to \infty} \frac{P(N)}{N^{d+1}/(d+1)!} \leq \limsup_{N \to \infty} \frac{P(N)}{N^{d+1}/(d+1)!} \leq \vol(L) + \varepsilon.    \]
    Since this is true for arbitrary $\varepsilon >0$, we see that the limit $\lim_{N \to \infty} \frac{P(N)}{N^{d+1}/(d+1)!} $ exists and is equal to $\vol(L)$.
\end{proof}

\subsection{Section rings and Cones}
The $\NN$-graded rings we will be interested in arise as the section rings of projective varieties over $k$ with respect to some ample divisor. In this subsection, we will review this and other related constructions.
  	
 		\begin{dfn}[Section Rings and Modules] \label{sectionringdfn}
		Let $X$ be a projective variety over $k$, $\cL$ an ample invertible sheaf on $X$ and $\cF$ a coherent sheaf on $X$. Then the $\NN$-graded ring $S$ defined by
		$$ S =	S(X, \cL) :=  \bigoplus _{n \geq 0} H^{0}(X, \cL^{n})	$$
		is called the \emph{section ring} of $X$ with respect to $\cL$. The affine scheme $\Spec(S)$ is called the \emph{cone over $X$} with respect to $\cL$. The \emph{section module} of $\cF$ with respect to $\cL$ is a $\ZZ$-graded $S$-module $M$ defined by
		$$M = M(\mathcal{F}, \cL) := \bigoplus _{n \in \ZZ} H^{0}(X, \cF \otimes \cL^{n}) 	.$$
		Similarly, the sheaf corresponding to $M$ on $\Spec(S)$ is called the \emph{cone over $\cF$} with respect to $\cL$.
	\end{dfn}

\begin{lem}\label{lem.SectionRingVSCoherentSheaf} Let $X$ be a projective variety over $k$ and $\cL$ an ample invertible sheaf over $X$. 
\begin{enumerate}
    \item The section ring $S$ of $X$ with respect to $\cL$ is always finitely generated over $k$ and hence, is Noetherian.  If $X$ is normal, then the section ring is also characterized as the unique normal $\NN$-graded ring $S$ such that $\Proj(S)$ is isomorphic to $X$ and the corresponding $\cO_X(1)$ is isomorphic to $\cL$.
    \item The section module of any coherent sheaf over $X$ with respect to $\cL$ is finitely generated over $S$. It is also characterized as the unique saturated (with respect to the homogeneous maximal ideal) $S$-module $M$ such that the associated coherent sheaf $\tilde{M}$ on $X$ is isomorphic to $\cF$. 
    \item For two coherent sheaves $\cF$ and $\cG$, we have a natural isomorphism:
    $$ \Hom_{\cO_X}(\cF, \cG)  \isom \Hom^{\text{gr}}_{S}(M(\cF, \cL), M(\cG, \cL))     $$
    where $\Hom ^{\text{gr}} _S(\, , \, )$ denotes the set of grading preserving $S$-module maps between two graded $S$-modules. 
    \end{enumerate}
\end{lem}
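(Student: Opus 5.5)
Parts (1) and (2) follow from Serre's theorems on ample line bundles, while part (3) is formal once (1) and (2) are in place.

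\textbf{Part (1).} Since $\cL$ is ample, we first reduce to the very ample case. Some power $\cL^r$ is very ample and globally generated, so $X \hookrightarrow \mathbb{P}^N$ with $\cO(1)|_X = \cL^r$. The Veronese-type subring $S^{(r)} = \bigoplus_n H^0(X,\cL^{rn})$ receives a map from the homogeneous coordinate ring $k[x_0,\dots,x_N]$. By Serre vanishing, $H^1(\mathbb{P}^N, \mathcal{I}_X(n)) = 0$ for $n \gg 0$, so this map is surjective in large degrees. By \cite{LazarsfeldPositivity1}, replacing $r$ by a multiple, we may even assume the map is surjective in all positive degrees, using projective normality of a high Veronese. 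In either case $S^{(r)}$ is finitely generated.

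We then write $S = \bigoplus_{i=0}^{r-1} M_i$ with $M_i = \bigoplus_n H^0(X,\cL^{i+rn})$. Each $M_i$ is the section module of the coherent sheaf $\cL^i$ with respect to $\cL^r$. By the standard Serre argument, each $M_i$ is a finitely generated $S^{(r)}$-module. The argument is that $\bigoplus_{n\ge n_0} H^0(\mathcal{F}(n))$ is generated in finitely many degrees, by surjectivity of $H^0(\mathcal{F}(n))\otimes H^0(\cO(1)) \to H^0(\mathcal{F}(n+1))$ for $n\gg0$ (Castelnuovo–Mumford regularity), and the remaining finitely many graded pieces are finite-dimensional. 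Hence $S$ is finite over the finitely generated algebra $S^{(r)}$, so $S$ is finitely generated and Noetherian.

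For the characterization when $X$ is normal, we first check that $S$ is normal. The ring $S$ is the ring of global sections of $\cO$ on the punctured cone $\mathbf{Spec}_X \bigoplus_{n \ge 0} \cL^n$ minus the vertex, which is a $\mathbb{G}_m$-bundle over $X$ and hence normal. Since $\dim S \ge 2$, the complement of the vertex has codimension $\ge 2$, so $S$ is integrally closed. Conversely, let $S'$ be normal, $\NN$-graded, with $\Proj S' \cong X$ and $\cO(1) \cong \cL$. Then $S' \to \bigoplus_n H^0(X,\cO(n))$ is an isomorphism in large degrees. A normal graded ring of dimension $\ge 2$ satisfies $S_2$, so the local cohomology $H^0_{\fm}(S')$ and $H^1_{\fm}(S')$ vanish. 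This forces $S' = \Gamma(\text{punctured cone}) = S$. (In the uniqueness statement one tacitly needs that $S'$ is generated so that $\cO(1)$ is invertible and compatible; I would state this assumption.)

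\textbf{Part (2).} Finite generation is exactly the Serre argument above, applied to $\cF$ and extended to all $n \in \ZZ$. Negative degrees cause no problem: $H^0(\cF\otimes\cL^{n})$ vanishes for $n \ll 0$ when $\cF$ is torsion-free. In general, the section module is saturated with possibly finitely many extra negative pieces, which are finite-dimensional, so the module stays finitely generated.

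For the characterization, recall that for any graded $M$ there is the exact sequence
\[0\to H^0_\fm(M)\to M\to \bigoplus_n H^0(X,\widetilde{M}(n))\to H^1_\fm(M)\to 0.\]
Saturation, meaning $H^0_\fm(M)=H^1_\fm(M)=0$, is thus equivalent to $M \cong M(\widetilde M,\cL)$. So any saturated $M$ with $\widetilde M\cong\cF$ equals $M(\cF,\cL)$. Conversely, $M(\cF,\cL)$ is saturated, since it is the global sections of the pullback of $\cF$ on the punctured cone.

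\textbf{Part (3).} Given $\phi:\cF\to\cG$, apply $H^0(-\otimes\cL^n)$ to get a graded $S$-linear map between the section modules. Going the other way, the functor $\widetilde{(\ )}$ sends a graded map back to a sheaf map. Because $\widetilde{M(\cF,\cL)}\cong\cF$, the composite $\Hom_{\cO_X}\to\Hom^{\mathrm{gr}}\to\Hom_{\cO_X}$ is the identity. It remains to show injectivity of $\widetilde{(\ )}$ on graded maps between saturated modules. A graded map $\psi$ with $\widetilde\psi=0$ has image killed by a power of $\fm$, so the image lies in $H^0_\fm$ of the target, which is zero by saturation. The main obstacle here is only bookkeeping with saturation; the real content lies in parts (1) and (2).
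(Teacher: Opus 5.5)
Your arguments for (1) and (3) are sound. Where the paper only cites the Stacks Project, you give a self-contained proof: Serre vanishing and Castelnuovo--Mumford regularity for finite generation, the $\mathbb{G}_m$-torsor over $X$ for normality, and the sequence $0\to H^0_{\fm}(M)\to M\to\bigoplus_n H^0(X,\widetilde{M}\otimes\cL^n)\to H^1_{\fm}(M)\to 0$ to compare graded modules with sheaves.

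Your caveat about uniqueness in (1) is genuinely needed. Take distinct points $H,P\in\mathbb{P}^1$ and Demazure's ring $R=\bigoplus_{n\geq 0}H^0(\mathbb{P}^1,\mathcal{O}(\lfloor n(H+\tfrac12 P)\rfloor))$. It is normal, with $\operatorname{Proj}R\cong\mathbb{P}^1$ and $\widetilde{R(1)}\cong\mathcal{O}_{\mathbb{P}^1}(1)$. Yet $R_2=H^0(\mathcal{O}_{\mathbb{P}^1}(3))$ is $4$-dimensional, so $R\not\cong k[x,y]$; the failure is that $\widetilde{R(2)}\not\cong\widetilde{R(1)}^{\otimes 2}$.

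Two small slips in (1):
\begin{enumerate}
\item The punctured cone is $\mathbf{Spec}_X\bigoplus_{n\in\mathbb{Z}}\cL^n$, the complement of the zero section in the total space of $\cL^{-1}$, not that total space minus a vertex.
\item Normality of $S$ follows from $S=\Gamma(U,\mathcal{O}_U)$ with $U$ normal and integral, together with $H^0(X,\cL^{n})=0$ for $n<0$ (which needs $\dim X\geq 1$). The codimension of the vertex plays no role there.
\end{enumerate}

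The genuine gap is the sentence in (2) about general $\cF$. It is false that only finitely many negative-degree pieces can survive. Suppose a closed point $x$ is an associated point of $\cF$. Then the subsheaf $\Gamma_{\{x\}}(\cF)$ of sections supported at $x$ is a nonzero skyscraper. Hence $H^0(X,\cF\otimes\cL^{n})\supseteq H^0(X,\Gamma_{\{x\}}(\cF)\otimes\cL^{n})\neq 0$ for \emph{every} $n\in\mathbb{Z}$. For $\cF=k(x)$ this gives $M(\cF,\cL)=\bigoplus_{n\in\mathbb{Z}}k$, which is unbounded below and therefore not finitely generated over the $\mathbb{N}$-graded ring $S$. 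So the finite-generation assertion of (2) is false as stated, and no argument can close this step; the paper's citation does not address it either.

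The correct hypothesis is that $\cF$ has no zero-dimensional associated points, for instance $\cF$ torsion-free with $\dim X\geq 1$. This covers $\cL^n$, ideal sheaves, $\mathcal{O}_X(D)$ and $F^e_*\mathcal{O}_X(D)$, which are the sheaves the paper actually applies the lemma to. Under this hypothesis $H^0(X,\cF\otimes\cL^{-n})=0$ for $n\gg 0$, and your Serre/regularity argument then gives finite generation.

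The characterization of $M(\cF,\cL)$ as the unique saturated module with $\widetilde{M}\cong\cF$, and your proof of (3), only use $\widetilde{M(\cF,\cL)}\cong\cF$ and $H^0_{\fm}(M(\cG,\cL))=0$. So they remain valid for all coherent $\cF,\cG$, with no finite generation needed.
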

\begin{proof}
See \cite[\href{https://stacks.math.columbia.edu/tag/0BXF}{Tag 0BXF}]{stacks-project}.
\end{proof}

\subsection{Globally $F$-regular varieties and $F$-signature of ample divisors} Let $X$ be a normal variety over an algebraically closed field $k$ of positive characteristic $p$. Let $F:X\to X$ be the Frobenius morphism raising the power by $p$ on the structure sheaf. We denote $F^e = F\circ F\circ\dots \circ F  $ as the $e$-th iterated Frobenius morphism.

\begin{dfn} \label{dfn:globalFregularity}
Let $X$ be a normal variety over an algebraically closed field $k$ of positive characteristic. $X$ is globally $F$-regular if for any effective Weil divisor $D$ on $X$, there exists some integer $e>0$ such that the natural map $\cO_X\to F^e_*\cO_X(D)$ splits as a map of $\cO_X$-modules.
\end{dfn}

\begin{dfn} \label{Iedfn}
Let $X$ be normal projective variety over $k$ and $D$ denote any Weil divisor on $X$. Then, for any integer $e \geq 1$, we define a $k$-vector subspace $I_e(D)$ of $H^0 (D):= H^0 (X, \cO_X(D)) $ as \[I_e(D):= \{f\in H^0(D)\mid \varphi(F^e_*f) = 0 \text{ for all } \varphi \in  \Hom_{\cO_X}(F^e_*( \cO_X(D)),\cO_X)) \}.\]
\end{dfn}

Now, we recall some technical properties of $I_e(D)$.

\begin{lem} \label{lem:Ieeffective}
    Let $X$ be a normal, projective variety over $k$ and $D$ and $E$ be effective Weil divisors on $X$. Note that the effective divisor $E$ defines an inclusion $H^0 (X, \cO_X(D)) \subset H^0 (X, \cO_X(D+E))$. Then, for any $e \geq 1$, we have
    \[ I_e (D) \subset I_e (D+E). \]
    \end{lem}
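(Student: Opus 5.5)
The idea is to pull any splitting-type map for $D+E$ back to one for $D$ by precomposing with the inclusion $\cO_X(D) \subset \cO_X(D+E)$. Since $E$ is effective, the canonical section $s_E$ of $\cO_X(E)$ gives an injection of reflexive rank-one sheaves $\cO_X(D) \hookrightarrow \cO_X(D+E)$. On global sections this is exactly the inclusion $H^0(X,\cO_X(D)) \subset H^0(X,\cO_X(D+E))$ referred to in the statement. All these sheaves sit inside the constant sheaf $K(X)$, so we regard $f \in H^0(X,\cO_X(D))$ as the same rational function when viewed in $H^0(X,\cO_X(D+E))$.

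Applying the exact functor $F^e_*$ gives an inclusion $\iota\colon F^e_*\cO_X(D) \hookrightarrow F^e_*\cO_X(D+E)$ of $\cO_X$-modules. Now let $f \in I_e(D)$ and let $\varphi \in \Hom_{\cO_X}(F^e_*\cO_X(D+E),\cO_X)$ be arbitrary. The composite $\varphi\circ\iota$ lies in $\Hom_{\cO_X}(F^e_*\cO_X(D),\cO_X)$. Because $f \in I_e(D)$, we get $(\varphi\circ\iota)(F^e_*f)=0$. But $\iota(F^e_*f)$ is just $F^e_*f$ regarded as a section of $F^e_*\cO_X(D+E)$, so $\varphi(F^e_*f)=0$. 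Since $\varphi$ was arbitrary, $f \in I_e(D+E)$, which proves $I_e(D) \subset I_e(D+E)$.

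The argument is essentially formal. The only point needing care is that the inclusion of global sections is compatible with the sheaf inclusion after applying $F^e_*$. This holds because $F^e_*$ does not change the underlying sheaf of abelian groups, so $\iota$ on global sections is literally the inclusion $H^0(D) \subset H^0(D+E)$. Normality of $X$ is used only so that the divisorial sheaves $\cO_X(D)$ are defined as subsheaves of $K(X)$ and the inclusion makes sense for Weil divisors. No other results from the paper are needed.
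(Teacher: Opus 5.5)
Your argument is correct and is the standard one: precomposing an arbitrary $\varphi\in\Hom_{\cO_X}(F^e_*\cO_X(D+E),\cO_X)$ with the inclusion $F^e_*\cO_X(D)\hookrightarrow F^e_*\cO_X(D+E)$ shows that every $f\in I_e(D)$ is killed by $\varphi$. The paper does not reprove the lemma but cites \cite[Lemma 4.12]{LeePandeFsigAmpCone}, and the same restriction-of-maps argument appears in its own proof of \autoref{lem:testdivisor}(a).
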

    \begin{proof}
        See \cite[Lemma 4.12]{LeePandeFsigAmpCone}.
    \end{proof}

\begin{thm}\cite[Theorem 4.9]{LeePandeFsigAmpCone}\label{Thm:DegenSpaceEqualsWholeSpace}
Let $X$ be a normal projective variety over $k$ with $\dim(X)>0$. Fix a norm $\|\|$ on the N\'eron-Severi space $N^1_\bR(X)$. Let $L$ be an effective Cartier divisor such that $\|L\|\neq 0$. Then, there exists a constant $C$ depending only on $X$ and the norm such that for any effective Cartier $H$ on $X$, we have: 
\begin{enumerate}
\item $I_e(mL) = H^0(mL)$ for all $e\geq 1$ and $m> Cp^e/\|L\|$
\item For all $n>2\|H\|/\|L\|$, \[I_e(m(nL+H))=H^0(m(nL+H)) \text{ for all $e\geq 1$ and $m>\frac{Cp^e}{n\|L\|}$}.\] 
\end{enumerate}
\end{thm}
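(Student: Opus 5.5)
The plan is to show something stronger than the claim: for $m$ in the stated range the whole module $\Hom_{\cO_X}(F^e_*\cO_X(mL),\cO_X)$ vanishes. Then the defining condition of $I_e(mL)$ in \autoref{Iedfn} holds vacuously, so $I_e(mL)=H^0(mL)$.

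\emph{Step 1: duality.} Since $X$ is normal, Grothendieck duality for the finite morphism $F^e$ identifies
\[\Hom_{\cO_X}(F^e_*\cO_X(mL),\cO_X)\cong H^0\big(X,\cO_X((1-p^e)K_X-mL)\big),\]
where $\cO_X((1-p^e)K_X - mL)$ is the reflexive rank one sheaf of the Weil divisor $(1-p^e)K_X - mL$. It therefore suffices to show that this Weil divisor has no nonzero global sections when $m>Cp^e/\|L\|$.

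\emph{Step 2: a degree computation against a fixed ample class.} Fix an ample Cartier divisor $A$ and set $d=\dim X>0$. Intersection numbers $D\cdot A^{d-1}$ are defined for Weil divisors on the normal variety $X$, by cutting with general members of $|kA|$ down to a curve. If $D$ is linearly equivalent to an effective Weil divisor then $D\cdot A^{d-1}\geq 0$.

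We have
\[\big((1-p^e)K_X-mL\big)\cdot A^{d-1}=(p^e-1)(-K_X\cdot A^{d-1})-m\,(L\cdot A^{d-1}).\]
The pseudo-effective cone $\overline{\mathrm{Eff}}(X)\subset N^1(X)_\bR$ is a closed cone containing no lines. The linear functional $D\mapsto D\cdot A^{d-1}$ is strictly positive on its nonzero elements. By compactness of the unit-norm slice of this cone, there is $c>0$, depending only on $X$, $A$ and the norm, such that $L\cdot A^{d-1}\geq c\|L\|$ for every effective $L$.

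Set $C:=2\max(|K_X\cdot A^{d-1}|,1)/c$. Then for $m>Cp^e/\|L\|$ the degree above is negative. Hence $H^0$ of that divisor is zero, which proves (a). I expect this step to be the main obstacle. The two points needing care are that the intersection with $A^{d-1}$ is well defined and nonnegative on effective Weil divisors (for a non-$\QQ$-Cartier $K_X$), and the strict positivity on the pointed pseudo-effective cone that yields the uniform constant $c$.

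\emph{Step 3: part (b).} Apply part (a) to the effective Cartier divisor $nL+H$. By the triangle inequality, $\|nL+H\|\geq n\|L\|-\|H\|>n\|L\|/2$ whenever $n>2\|H\|/\|L\|$. So it suffices to have $m>2C_0p^e/(n\|L\|)$, where $C_0$ is the constant obtained in (a). The factor $2$ is harmless: replacing $C$ by $2C_0$ from the outset makes both (a) and (b) hold with the same constant.
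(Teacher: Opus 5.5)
The paper gives no proof of this statement; it is quoted from \cite[Theorem 4.9]{LeePandeFsigAmpCone}, so there is no in-paper argument to compare yours with. Your proof is correct and self-contained. The duality $\Hom_{\cO_X}(F^e_*\cO_X(mL),\cO_X)\cong H^0(X,\cO_X((1-p^e)K_X-mL))$ is the same isomorphism the paper uses in \autoref{lem:injectivemap}. The degree $D\mapsto \deg(A^{d-1}\cap[D])$ is additive on Weil divisors, invariant under linear equivalence, and nonnegative on effective ones. Your choice of $C$ does force $(p^e-1)|K_X\cdot A^{d-1}|<mc\|L\|\le mL\cdot A^{d-1}$, hence negative degree. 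Your treatment of (b) is also right, including the point that you must pass from $C_0$ to $2C_0$ to get a single constant for both parts.

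The step you flag is standard and holds in every characteristic. Suppose $D\in\overline{\mathrm{Eff}}(X)$ and $D\cdot A^{d-1}=0$. Then $D\cdot N_1\cdots N_{d-1}\geq 0$ for all nef $N_i$: this holds for effective classes by Kleiman applied to each component, and hence on the closure. Take $N_i=A+t_iB_i$ with $|t_i|$ small. The polynomial $t\mapsto D\cdot\prod_i(A+t_iB_i)$ has degree at most one in each $t_i$, is nonnegative near $0$, and vanishes at $0$, so it vanishes identically. Hence $D\cdot B_1\cdots B_{d-1}=0$ for all Cartier $B_i$. Given a curve $C$, cut $X$ by general members of $|\mathcal{I}_C(kA)|$ down to a surface $S\supset C$. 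On a resolution of $S$, the pullback of $D$ is orthogonal to the pullbacks of $A$ and of $D$, so the Hodge index theorem shows it is numerically trivial. Therefore $D\cdot C=0$ and $D\equiv 0$.

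Your argument also gives more than stated. It shows the whole Hom space vanishes, and it applies to any Cartier $L$ whose class is a nonzero pseudo-effective class, not only to effective $L$. That is exactly the generality in which the paper later invokes the theorem, with a uniform constant for big but not necessarily effective $L$: the remark after \autoref{thm:Fsigbigexists}, \autoref{Prop:signaturevolumecomparison}, and \autoref{lem:upperboundonalpha}. Your version therefore covers those uses directly, which the statement as written does not.
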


\begin{dfn}(\cite{VonKorffthesis},\cite{LeePandeFsigAmpCone})
    Let $X$ be a globally $F$-regular projective variety over $k$. Suppose that $d=\dim(X)>0$. Let $L$ be an ample $\mathbb{R}$-divisor on $X$. Then, the $F$-signature function $\s_X(L)$ is defined as \[\s_X(L) = \lim_{e\to\infty}\frac{1}{p^{e(d+1)}}\sum_{m=0}^\infty\dim_k \frac{H^0(mL)}{I_e(mL)}.\]
\end{dfn}

For globally $F$-regular varieties, the numerical equivalence implies the linear equivalence of divisors \cite[Theorem 3.4]{LeePandeFsigAmpCone}. Hence $s_X(L)$ is a well-defined function on $\mathrm{Amp}(X)$. For an integral ample divisor $L$ on $X$, the $F$-signature function $\s_X(L)$ equals the $F$-signature of the section ring of $X$ with respect to $L$ \cite{VonKorffthesis}, \cite{LeePandeFsigAmpCone}. Hence, it is positive if and only if $X$ is globally $F$-regular \cite{AberbachLeuschke}, \cite{SmithGloballyFRegular}. The above limit exists and the $F$-signature function extends continuously to $\mathrm{Nef}(X)\setminus\{0\}$ of $X$ \cite{LeePandeFsigAmpCone}.

\subsection{Restricting Weil divisors to normal, complete intersection subschemes} \label{restrictingWeildivisors} Let $X$ be a normal variety over $k$ and $D$ be a Weil divisor on $X$. Suppose $Y \subset X$ is a closed subvariety that is locally defined by a regular sequence in $X$. Assume that $Y$ is also normal. In this situation, we may ``restrict" the rank one reflexive sheaf $\cF:= \cO_X(D)$ on $X$ to a reflexive sheaf $\cF_Y$ on $Y$ as follows: Let $U$ be the regular locus of $Y$. Then there is an open subset $V \subset X_{\text{reg}}$ (where $X_{\text{reg}}$ denotes the regular locus of $X$) such that $V \cap Y = U$. This is possible because $Y$ is a complete intersection in $X$. Therefore, we may restrict $\cF$ to $V$ and then to an invertible sheaf on $U$, since $\cF|_V$ is invertible. Define $\cF_Y$ to be
            \[ \cF_Y :=  i_* (\cF|_U) \]
            where $i: U \to Y$ is the inclusion. Then, $\cF_Y$ is a rank one reflexive sheaf on $Y$ because $Y$ is normal and $U$ contains all the codimension one points of $Y$. Thus, we can write $\cF_Y$ as $\cO_Y (D_Y)$ for some Weil-divisor $D_Y$ on $Y$. Furthermore, if $\text{Supp}(D)$ does not contain $Y$, then since $Y$ is normal, hence integral, $D$ naturally restricts to a Cartier divisor $D_U$ on $U$ (given by restricting the equation for $D$) and we may take $D_Y$ to be the closure of $D_U$. It is also clear from the description of restriction that it commutes with addition of Weil-divisors (since the restriction of Cartier divisors on the regular locus commutes with addition).

            Now assume that $Y$ is a normal Cartier divisor on $X$. Then, restricting a Weil divisor $D$ on $X$ to $Y$ as described above, we note that we have a left exact sequence
            \[ 0 \to \cO_X(D - Y) \to \cO_X(D) \to \cO_Y (D_Y). \]
            Note that $X_{\text{reg}} \cap Y$ contains all the codimension one points of $Y$. Thus, pushing forward from $X_{\text{reg}}$, we obtain a left exact sequence
            \begin{equation} \label{restrictingtonormaldivisorexactsequence} 0 \to H^0 (X, \cO_X(D - Y)) \to H^0 (X, \cO_X(D)) \to H^0 (Y, \cO_Y(D_Y)). \end{equation}

 \section{The $F$-signature of big divisors} \label{section:Fsigofbig}

In this section, we define the $F$-signature function for big divisors and prove its existence. Let $k$ be an algebraically closed field of characteristic $p>0$. Throughout the section, $X$ will denote a normal projective variety over $k$ with $\mathrm{dim}(X)>0$. For the sake of simplicity of certain formulas, we abuse notation for the global sections $H^0 (X, \cF)$ of a sheaf $\cF$ computed on $X$ by dropping $X$ and simply write $H^0 (\cF)$, or even $H^0 (L)$ for a divisor $L$ when we mean $H^0 (X, \cO_X (L))$.

Consider a big Cartier divisor $L$ on $X$. Recall that for integers $e,m \geq 1$, we have the splitting subspace 
\[I_e(mL) = \{ s \in H^0(mL) \mid \varphi(F^e_* s) = 0 \text{ for all } \varphi \in \Hom_{\cO_X}(F^e_*\cO_X(mL), \cO_X)\}.\]

\begin{dfn} \label{dfn:Fsigbig} \label{Dfn.FsigBigDivisors}
    The $F$-signature of a $d$-dimensional normal projective variety $X$ with respect to a big Cartier divisor $L$ on $X$ is defined to be
    \begin{equation}\label{eqn:FsigFormula}
    \s _X (L) : = \lim_{e\to\infty}\frac{1}{p^{e(d+1)}}\sum_{m=0}^\infty\dim_k \frac{H^0(mL)}{I_e(mL)}\end{equation}
\end{dfn}

\subsection{Existence of the $F$-signature.}
    The main goal of this section is to show that the above limit exists. Note that the section ring $\bigoplus _{m \geq 0} H^0 (X, \cO_X (mL))$ may not be finitely generated and we do not assume this. Hence, \cite{TuckerFSigExists} does not directly apply here to guarantee the existence of the above limit.
    
\begin{thm} \label{thm:Fsigbigexists} \label{MainthmA}
    The limit \autoref{eqn:FsigFormula} defining the $F$-signature of a big divisor exists.
\end{thm}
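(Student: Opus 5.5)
Our plan is to show that the normalized sequence
\[ a_e := \frac{1}{p^{e(d+1)}}\sum_{m\ge 0}\dim_k \frac{H^0(mL)}{I_e(mL)} \]
is bounded and \emph{almost monotone}, in the sense that $a_{e+1} \le a_e + \epsilon_e$ with $\sum_e \epsilon_e < \infty$. A bounded sequence with this property converges.

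\textbf{Step 1: finiteness and boundedness.} Since $L$ is big, some multiple is effective. By \autoref{Thm:DegenSpaceEqualsWholeSpace}(a), after replacing $L$ by a linearly equivalent effective divisor, there is a constant $C$ with $I_e(mL)=H^0(mL)$ for all $m > Cp^e/\|L\|$. This needs a short reduction if only $nL$ is effective. In that case we write $m = qn + r$ and use \autoref{lem:Ieeffective} to pass from $H^0(rL+qnL)$, choosing a fixed effective representative. So the sum defining $a_e$ is finite, with $m$ ranging up to $C'p^e$. By \autoref{lem:sumvol},
\[ a_e \le \frac{1}{p^{e(d+1)}}\sum_{m\le C'p^e} h^0(mL) \le \frac{2\,\vol(L)\,C'^{d+1}}{(d+1)!} \]
for $e \gg 0$. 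Hence $(a_e)$ is bounded.

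\textbf{Step 2: comparing $e$ and $e+1$.} We use the standard Frobenius composition. If $f\in I_e(mL)$, we claim that $f^p\cdot g \in I_{e+1}(m'L)$ for suitable choices, and more importantly we need the reverse count. The cleanest route, mimicking Tucker's and Polstra--Tucker's argument, is to compare the pieces via
\[ \Hom(F^{e+1}_*\cO_X(mL),\cO_X) = \Hom\bigl(F_*F^{e}_*\cO_X(mL),\cO_X\bigr). \]
Any $\varphi$ at level $e+1$ factors as $\psi\circ F_*(\phi)$ with $\phi\colon F^e_*\cO_X(mL)\to \cO_X(jL')$. Concretely, we split $m = pm_1 + r$ with $0\le r<p$. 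We then relate $\dim H^0(mL)/I_{e+1}(mL)$ to $p^{d}$ times $\dim H^0(m_1L)/I_e(m_1L)$, up to an error. Summing over $m$ gives
\[ a_{e+1} \le a_e + \frac{E_e}{p^{e(d+1)}}. \]
Here $E_e$ counts the failures, and these are controlled by the rank discrepancy between $F_*\cO_X(rL)$ and a free module of rank $p^d$ on a fixed big open set. Following Polstra--Tucker's uniform convergence lemma, we aim for $E_e = O(p^{ed})$, which gives $\epsilon_e = O(p^{-e})$. This is summable.

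\textbf{Step 3: the main obstacle.} Without finite generation, we cannot use the module-theoretic input of the uniform bound directly. Our plan is to work sheaf-theoretically on $X$. We fix a very ample $A$ and effective $N$ with $L \sim A + N$ (Kodaira's lemma). We find a fixed inclusion $F_*\cO_X \hookrightarrow \cO_X(B)^{\oplus p^d}$ and a map in the other direction, with cokernels supported on a fixed closed set or twisted by a fixed divisor $B$. Then \autoref{lem:Ieeffective} absorbs the twist by $B$. This shifts $m$ by a bounded amount in each degree, and the number of degrees involved is $O(p^e)$, with each piece of size $O(p^{ed})$. After summing, this gives an error of $O(p^{e d})$ in the numerator, relative to $p^{e(d+1)}$. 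Checking that these bounds are uniform in $m$ up to $Cp^e$ is the step we expect to be hardest. If it fails, the fallback is to prove $\liminf$ and $\limsup$ agree by sandwiching. Using \autoref{Thm:DegenSpaceEqualsWholeSpace}(b), we would sandwich between $F$-signatures of the ample divisors $L+\tfrac1n A$ and $\tfrac{1}{n}$-perturbations, for which the limit exists by the ample case.
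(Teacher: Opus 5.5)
Your route is genuinely different from the paper's. Once Step 2 is set up correctly it works, and in one respect it is simpler.

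\textbf{How the paper argues.} The paper proves the almost-increasing inequality of \autoref{Lem.LimitExists}. It embeds $F_*\cO_X(jL)\hookrightarrow\cO_X(D)^{\oplus p^d}$ (\autoref{lem:injectivemap}). It then bounds $p^d\dim_k H^0(D+\lfloor m/p\rfloor L)/I_e(D+\lfloor m/p\rfloor L)$ by $\dim_k H^0(mL)/I_{e+1}(mL)$ plus a cokernel term (\autoref{Lem.LimitComparison}). Finally it removes the twist by $D$ degree by degree using \autoref{Pn:twistedFsig}, whose proof needs Frobenius duality (\autoref{Lem.PandeDuality}) and restriction to a normal divisor.

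\textbf{How your direction works.} Your almost-decreasing inequality needs the opposite map. For each $0\le r\le p-1$, take an injection $\cO_X(-B)^{\oplus p^d}\hookrightarrow F_*\cO_X(rL)$ whose cokernel $Q_r$ is supported in dimension at most $d-1$; it comes from $p^d$ generically independent global sections of $F_*\cO_X(rL+pB)$ for $B$ sufficiently ample. For $m=pm_1+r$, twisting by $m_1L$ and applying $F^e_*$ gives $J\colon F^e_*\cO_X(m_1L-B)^{\oplus p^d}\hookrightarrow F^{e+1}_*\cO_X(mL)$.

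The point your write-up never states is that $J$ maps $I_e(m_1L-B)^{\oplus p^d}$ into $I_{e+1}(mL)$. This holds because, for every $\varphi\colon F^{e+1}_*\cO_X(mL)\to\cO_X$, the composite $\varphi\circ J$ is a $p^d$-tuple of level-$e$ maps. Hence
\[\dim_k\frac{H^0(mL)}{I_{e+1}(mL)}\le p^d\dim_k\frac{H^0(m_1L-B)}{I_e(m_1L-B)}+h^0\bigl(Q_r\otimes\cO_X(m_1L)\bigr).\]
Choose an effective $B'\sim cL-B$. Then \autoref{lem:Ieeffective} bounds the first term by $p^d\dim_k H^0((m_1-c)L)/I_e((m_1-c)L)+p^d\,h^0\bigl(\cO_{B'}(m_1L-B)\bigr)$. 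Both error terms are $O(m_1^{d-1})=O(p^{e(d-1)})$. Summing over $m\le C_0p^{e+1}$, each $m_1$ occurs $p$ times and the shift by $c$ disappears on reindexing. This gives $\sum_m\dim_k H^0(mL)/I_{e+1}(mL)\le p^{d+1}\sum_m\dim_k H^0(mL)/I_e(mL)+O(p^{ed})$.

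\textbf{What each approach buys.} In your direction \autoref{lem:Ieeffective} points the right way, and the twist costs nothing after summing, so \autoref{Pn:twistedFsig} is not needed at all. The paper's pointwise comparison is more than existence requires, but the paper reuses it for the scaling rule \autoref{Pn:TransformationRuleFsig}.

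\textbf{Points to fix.}

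(i) The factorization in Step 2, $\varphi=\psi\circ F_*(\phi)$ with $\phi$ defined on $F^e_*\cO_X(mL)$, applies the level-$e$ map to the section first. That compares degree $m$ at level $e+1$ with degree $m$ twisted by $p^e$ times a fixed divisor at level $e$, not with degree $\lfloor m/p\rfloor$. The degree drop requires applying the level-one map first, via $F^{e+1}_*\cO_X(mL)=F^e_*\bigl(F_*\cO_X(rL)\otimes\cO_X(m_1L)\bigr)$, as above.

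(ii) Of the two maps in Step 3, only $\cO_X(-B)^{\oplus p^d}\hookrightarrow F_*\cO_X(rL)$ serves your inequality. The embedding into $\cO_X(B)^{\oplus p^d}$ yields the opposite inequality, which is the paper's. You also need these maps for every $F_*\cO_X(rL)$ with $0\le r<p$, not just for $F_*\cO_X$.

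(iii) The per-degree error must be $O(p^{e(d-1)})$, which is what the lower-dimensional supports give. An error of $O(p^{ed})$ per degree over $O(p^e)$ degrees, as you wrote, would not be summable after normalization. The uniformity in $m$ that you expected to be hardest is routine, since every constant comes from the fixed sheaves $Q_r$ and $\cO_{B'}$.

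(iv) Drop the fallback. For big $L$ outside the nef cone, $L+\frac1nA$ is not ample for large $n$.

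(v) In Step 1, \autoref{lem:Ieeffective} cannot prove $I_e(mL)=H^0(mL)$, because it only pushes $I_e$ of a smaller divisor into $I_e$ of a larger one. Argue directly instead. If $I_e(mL)\neq H^0(mL)$, then $\mathrm{Hom}_{\cO_X}(F^e_*\cO_X(mL),\cO_X)\cong H^0\bigl((1-p^e)K_X-mL\bigr)\neq 0$. Intersect an effective member of this linear system with $H^{d-1}$ for an ample $H$, and note $L\cdot H^{d-1}>0$ because $L$ is big. This gives $m\le(p^e-1)\,(-K_X\cdot H^{d-1})/(L\cdot H^{d-1})$.
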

\begin{rem}
The sum in the formula \autoref{eqn:FsigFormula} is finite for each $e \geq 1$. Indeed, by \autoref{Thm:DegenSpaceEqualsWholeSpace}, there exists a constant $C$ such that $H^0(mL) = I_e(mL)$ for all $m>Cp^e/\|L\|$.
\end{rem}

\begin{rem}
    If $L$ is ample, then \autoref{dfn:Fsigbig} coincides with the $F$-signature of the section ring $S (X, L)$ over $X$ \cite[Theorem 3.3]{LeePandeFsigAmpCone}.
\end{rem}
 
  Now, we proceed to show the existence of the limit. To accomplish that, we will use the following strategy from \cite{PolstraTuckerCombinedApproach}.

\begin{lem}\label{Lem.LimitExists}\cite[Lemma 3.5.(ii)]{PolstraTuckerCombinedApproach} Let $p$ be a prime number and $d$ be a natural number. Let $a_e$ be a sequence of positive real numbers such that $a_e/p^{e(d+1)}$ is bounded. Suppose that there exists a positive number $C$ such that \begin{equation}\label{Eqn.Existence}\frac{a_e}{p^{e(d+1)}}\leq \frac{a_{e+1}}{p^{(e+1)(d+1)}}+\frac{C}{p^e} \text{ for all }e \in \bN\end{equation} Then the limit $a= \lim_{e\to\infty} \frac{a_e}{p^{e(d+1)}}$ exists.
\end{lem}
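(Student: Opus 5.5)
The plan is to convert the almost-monotone sequence into a genuinely monotone one by absorbing the error terms into a convergent tail, and then apply the monotone convergence theorem for real sequences.

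Set $b_e := a_e/p^{e(d+1)}$. The hypothesis \autoref{Eqn.Existence} then reads $b_e \leq b_{e+1} + C/p^e$ for all $e$, and $(b_e)$ is bounded, say $|b_e| \leq B$.

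The key construction is to subtract the tail of the geometric series of error terms. Define
\[ c_e := b_e - \sum_{j=e}^{\infty} \frac{C}{p^j} = b_e - \frac{C\,p^{1-e}}{p-1}. \]
Then
\[ c_{e+1} - c_e = b_{e+1} - b_e + \frac{C}{p^e} \geq 0, \]
so $(c_e)$ is non-decreasing. It is also bounded above, since $c_e \leq b_e \leq B$. Hence $c := \lim_{e\to\infty} c_e$ exists and is finite.

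Finally, $b_e - c_e = C p^{1-e}/(p-1)$ tends to $0$ as $e\to\infty$ because $p\geq 2$. Therefore $\lim_{e\to\infty} b_e$ exists and equals $c$, which is the limit $a$ in the statement.

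There is no real obstacle here. The only point needing care is choosing the correction term so that the error $C/p^e$ telescopes exactly. The geometric tail $\sum_{j\geq e} C/p^j$ does this and still goes to zero. Positivity of the $a_e$ is not needed beyond the boundedness assumption.
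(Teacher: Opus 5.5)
Your proof is correct. With $c_e = b_e - \frac{Cp^{1-e}}{p-1}$, the hypothesis is exactly $c_e \le c_{e+1}$. The $c_e$ are bounded above by $\sup_e b_e$, and $b_e - c_e \to 0$, so $b_e$ converges.

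The paper gives no proof of this lemma; it simply quotes it from Polstra--Tucker. The usual argument for such a statement iterates the inequality to get
$b_e \le b_{e'} + \frac{Cp^{1-e}}{p-1}$ for all $e' > e$. It then takes $\liminf_{e'}$ followed by $\limsup_e$ to conclude $\limsup_e b_e \le \liminf_e b_e$.

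Your monotone repackaging encodes the same telescoping estimate. It is also the device the paper itself uses in \autoref{lem:AlphaInvExists}, where $\alpha_e + p^{-e}$ is shown to be monotone. Your version has two small advantages. It identifies the limit as a supremum, $a = \sup_e \bigl(b_e - \frac{Cp^{1-e}}{p-1}\bigr)$, which gives the effective bound $a \ge b_e - \frac{Cp^{1-e}}{p-1}$ for every $e$. And, as you note, it uses only an upper bound on the $b_e$, so positivity of the $a_e$ plays no role.
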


 We will check \autoref{Eqn.Existence} for $a_e = \sum_{m=1}^\infty \dim_k \frac{H^0(mL)}{I_e(mL)}$ to show the existence of the $F$-signature. The idea is to find suitable degree-lowering maps and use perturbations to ample divisors to control the $a_e$'s.

\begin{lem}\label{Lem.LimitComparison} Let $L$ be a big divisor on $X$ and fix a constant $C_1 >0$ as in \autoref{Thm:DegenSpaceEqualsWholeSpace}. Suppose there exists a Cartier divisor $D$ such that for each $0 \leq j \leq p-1$, we have an inclusion
\[ \iota_{j}: F_*\cO_X(jL) \hookrightarrow  \cO_X (D)^{\oplus p^{d}}.\]
Then, there exists a constant $C_2>0$ such that for all $e \geq 1$ and for all $m<C_1p^e$, we have \[p^d \dim_k \frac{H^0(D+\lfloor \frac{m}{p}\rfloor L)}{I_e(D+\lfloor\frac{m}{p}\rfloor L)} \leq \dim_k \frac{H^0(mL)}{I_{e+1}(mL)} + C_{2}p^{(e-1)(d-1)}.\]
\end{lem}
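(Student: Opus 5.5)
The plan is to write $m = pq + j$ with $q = \lfloor m/p \rfloor$ and $0 \leq j \leq p-1$, and to transport splittings of $\cO_X(D+qL)$ at level $e$ to splittings of $\cO_X(mL)$ at level $e+1$ through the given inclusions $\iota_j$.

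\textbf{Step 1: the inclusion.} Since $L$ is Cartier, the projection formula gives
\[ F_*\cO_X(mL) \cong F_*\cO_X(jL) \otimes \cO_X(qL). \]
Tensoring $\iota_j$ with the invertible sheaf $\cO_X(qL)$ then produces an injection
\[ \iota_{j,q} : F_*\cO_X(mL) \hookrightarrow \cO_X(D+qL)^{\oplus p^d}. \]
Both sides have generic rank $p^d$, because $X$ is a variety over an algebraically closed field and so $F_*\cO_X$ has rank $p^d$. Hence $\iota_j$ is generically an isomorphism, and its cokernel $T_j$ is a coherent sheaf supported in dimension $\leq d-1$. The cokernel of $\iota_{j,q}$ is $T_j \otimes \cO_X(qL)$.

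\textbf{Step 2: comparing the splitting subspaces.} Apply $F^e_*$ to $\iota_{j,q}$ to get
\[ F^{e+1}_*\cO_X(mL) \hookrightarrow \bigoplus_{i=1}^{p^d} F^e_*\cO_X(D+qL). \]
Let $s \in H^0(mL)$ and let $(s_1,\dots,s_{p^d}) \in H^0(D+qL)^{\oplus p^d}$ be its image under $H^0(\iota_{j,q})$. Suppose some $s_i \notin I_e(D+qL)$. Then there is $\varphi : F^e_*\cO_X(D+qL) \to \cO_X$ with $\varphi(F^e_* s_i) \neq 0$. Precomposing $\varphi \circ \mathrm{pr}_i$ with $F^e_*\iota_{j,q}$ gives a map $F^{e+1}_*\cO_X(mL) \to \cO_X$ that does not kill $F^{e+1}_* s$, so $s \notin I_{e+1}(mL)$.

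By contraposition, $H^0(\iota_{j,q})$ carries $I_{e+1}(mL)$ into $I_e(D+qL)^{\oplus p^d}$. It therefore induces a linear map
\[ \frac{H^0(mL)}{I_{e+1}(mL)} \longrightarrow \left( \frac{H^0(D+qL)}{I_e(D+qL)} \right)^{\oplus p^d}. \]
The cokernel of this map is a quotient of the cokernel of $H^0(\iota_{j,q})$. By left exactness of $H^0$, that cokernel injects into $H^0(T_j \otimes \cO_X(qL))$. This yields
\[ p^d \dim_k \frac{H^0(D+qL)}{I_e(D+qL)} \leq \dim_k \frac{H^0(mL)}{I_{e+1}(mL)} + h^0(T_j \otimes \cO_X(qL)). \]

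\textbf{Step 3: the error term (the main obstacle).} It remains to show $h^0(T_j \otimes \cO_X(qL)) \leq C_2 p^{(e-1)(d-1)}$ uniformly in $j$ and in $q \leq m/p < C_1 p^{e-1}$. The difficulty is that $L$ is only big, so Hilbert polynomials in $L$ are not directly available.

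To handle this, choose a very ample divisor $H$ whose member avoids the finitely many associated points of $T_0,\dots,T_{p-1}$, and set $A = L + H'$ with $H'$ a sufficiently ample general effective divisor, also avoiding those points, so that $A$ is ample. Multiplication by the equation of $qH'$ then gives injections $T_j(qL) \hookrightarrow T_j(qA)$. Since $\dim \operatorname{Supp} T_j \leq d-1$, the Hilbert function of $T_j$ with respect to the ample $A$ satisfies $h^0(T_j(qA)) \leq C' (q+1)^{d-1}$ for all $q \geq 0$, after enlarging $C'$ to cover small $q$. Taking the maximum over the finitely many $j$ and using $q < C_1 p^{e-1}$ gives
\[ h^0(T_j \otimes \cO_X(qL)) \leq C_2\, p^{(e-1)(d-1)}, \]
with $C_2$ independent of $e$ and $m$. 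Combining this with the display in Step 2 proves the lemma.
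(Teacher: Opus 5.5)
Your proposal is correct and follows essentially the same route as the paper: twist $\iota_j$ by $\cO_X(\lfloor m/p\rfloor L)$, apply $F^e_*$, observe that $I_{e+1}(mL)$ maps into $I_e(D+\lfloor m/p\rfloor L)^{\oplus p^d}$, and bound the cokernel of the induced map on quotients by $h^0$ of the twisted cokernel of $\iota_j$. The only difference is cosmetic and lies in the error term: the paper dominates $L|_{Z_j}$ by a very ample $A_j$ on the support $Z_j$ of the cokernel, whereas you dominate $L$ globally by the ample divisor $A = L + H'$, with $H'$ avoiding the associated points (your auxiliary very ample $H$ is never used and can be dropped).
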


\begin{proof}
If $m=0$, we may take $C_2=2p^d\dim_k H^0(X,\cO_X(D))$. Then, \[p^d \dim_k \frac{H^0(D)}{I_e(D)} \leq \dim_k \frac{H^0(X,\cO_X)}{I_{e+1}(0)} + C_{2}p^{(e-1)(d-1)} \text{ for all $e\geq 1$}.\] Hence, we assume $m\geq 1$. For each $m \geq 1$, let us write $m=p\lfloor \frac{m}{p}\rfloor+j_m$ for some $0 \leq j_m \leq p-1$. For each $e\geq 1$, consider an inclusion given by $\iota_{j_m}: F_*\cO_X(j_mL)\to \cO_X(D)^{\oplus p^d}$. Let $\iota_{e,m}$ denote the inclusion
\begin{equation}\label{eqn.IotaMapDefn} \iota_{e,m}: F^{e+1}_* \cO_X(mL) \to F^e_*\cO_X\left(D+\left\lfloor\frac{m}{p} \right\rfloor L\right)^{\oplus p^{d}} \end{equation}
obtained by twisting $\iota_{j_m}$ by the invertible sheaf $\cO_X(\lfloor \frac{m}{p} \rfloor L)$ and taking the $e$-th iterated Frobenius pushforward.

Now, for each $e\geq 1$ and $1\leq m <C_1p^e $, consider the following exact sequence \begin{equation}\label{Eqn.ExactSeqCokercomputation} 0 \to  F^{e+1}_* \cO_X(mL) \stackrel{\iota_{e,m}}{\to} F^e_*\cO_X\left(D+\left\lfloor\frac{m}{p} \right\rfloor L\right)^{\oplus p^{d}} \to \coker(\iota_{e,m}) \to 0. \end{equation}
For simplicity, set $\ell = \lfloor \frac{m}{p} \rfloor $. Then,
\[\coker \iota_{e,m} = \frac{F^e_*\cO_X(D + \ell L)^{\oplus p^d}}{F^{e+1}_*\cO_X((j_m+\ell p)L)} = F^e_*\left(\frac{\cO_X(D)^{\oplus p^d}}{F_*\cO_X(j_mL)}\otimes \cO_X(\ell L) \right).\]

Let $Q_{j}$ be the cokernel of $\iota_j: F_*\cO_X (jL) \to \cO_X(D)^{\oplus p^d}$ and $Z_j$ denote the support of $Q_j$. Since $\iota_j$ is generically an isomorphism, $\dim Z_j \leq d-1$. For each $0 \leq j \leq p-1$, choose a very ample divisor $A_j$ on $Z_j$ such that the support of $A_j - L|_{Z_j}$ is effective and avoids all the associated points of $A_j$. In other words, locally on $Z_j$, the function defining $A_j - L|_{Z_j}$ is a non-zero divisor on $Q_j$. This ensures that for each $j$ and $ \ell \geq 1$, we an injective map
\[ Q_j \otimes \cO_X (\ell L) \hookrightarrow Q_j (\ell A_j). \]

Note that since $\iota_{e,m}$ is obtained by twisting $\iota _{j_m}$, the cokernel of $\iota_{j_m}$ is also supported on $Z_{j_m}$. By the asymptotic Riemann-Roch formula \cite[Example 1.2.9]{LazarsfeldPositivity1}, there exist constants $C', C_0 > 0$ such that \begin{equation}\label{Eqn.CokernelComputation}\begin{split}\dim_k H^0(\coker\iota_{e,m})&= H^0\left(Q_{j_m}\otimes \cO_X(\ell L) \right) \leq H^0\left(Q_{j_m}\otimes \cO_X(\ell A_{j_m}) \right) \\
& \leq   C'\ell^{d-1} +O(l^{d-2})\leq C_0 \ell ^{d-1} \leq  C_0\left(\frac{m}{p}\right)^{d-1}< C p^{(e-1)(d-1)}, \end{split}\end{equation}
where we set $C = C_0C_1^{d-1}$ and we have used that $\ell \leq m/p \leq C_1 p^{e-1}$ since $m \leq C_1 p^e$.

Now, consider a map $\Psi$ defined by the composition of \[H^0(mL) \to H^0\left( D+ \left\lfloor \frac{m}{p}\right\rfloor L\right)^{\oplus p^d}\to \left(\frac{H^0\left( D+ \lfloor \frac{m}{p}\rfloor L\right)}{I_e\left( D+ \lfloor \frac{m}{p}\rfloor L\right)}\right)^{\oplus p^d}.\] We have the exact sequence \[0\to \ker\Psi \to H^0(mL) \stackrel{\Psi}{\to}\left(\frac{H^0\left( D+ \lfloor \frac{m}{p}\rfloor L\right)}{I_e\left( D+ \lfloor \frac{m}{p}\rfloor L\right)}\right)^{\oplus p^d} \to \coker \Psi \to 0.\] Now, we claim that $I_{e+1}(mL)\subseteq \ker\Psi$. Indeed, we have the image of $I_{e+1}(mL)$ in $H^0(D+\lfloor \frac{m}{p}\rfloor L)$ is contained in $ I_e\left( D+ \lfloor \frac{m}{p}\rfloor L\right)$. If not, there is a section $s\in I_{e+1}(mL)$ whose image is not contained in $I_e\left( D+ \lfloor \frac{m}{p}\rfloor L\right)$. Then, via \autoref{eqn.IotaMapDefn}, we have a map $F^{e+1}_*\cO_X(mL) \to F^e_*\cO_X\left( D+ \lfloor \frac{m}{p}\rfloor L\right)^{\oplus p^d}\to \cO_X$ sending $s\mapsto 1$, which contradicts the fact that $s$ is a non-splitting section. 

Therefore,  we have the bounds \[\begin{split} p^d\dim_k\frac{H^0\left( D+ \lfloor \frac{m}{p}\rfloor L\right)}{I_e\left( D+ \lfloor \frac{m}{p}\rfloor L\right)}&\leq \dim_k\frac{H^0(mL)}{\ker \Psi}+\dim_k \coker\Psi \\ &\leq \dim_k\frac{H^0(mL)}{I_{e+1}(mL)}+\dim_k \coker\Psi .\end{split}\]
Now, we bound the dimension of the cokernel of $\Psi$: \begin{align*}\dim_k\coker \Psi &= \dim_k\frac{H^0\left( D+ \lfloor \frac{m}{p}\rfloor L\right)^{\oplus p^d}}{H^0(mL)+ I_e\left( D+ \lfloor \frac{m}{p}\rfloor L\right)^{\oplus p^d}} &\leq \dim_k\frac{H^0\left( D+ \lfloor \frac{m}{p}\rfloor L\right)^{\oplus p^d}}{H^0(mL)}\\
&&  \leq H^0(\coker\iota_{e,m}) \leq Cp^{(e-1)(d-1)} .\end{align*}
The last inequality holds thanks to \autoref{Eqn.CokernelComputation}. This completes the proof.
\end{proof}

\begin{lem} \label{lem:injectivemap}
    Let $X$ be a normal, projective variety of dimension $d$ and $L$ be any Cartier divisor on $X$. Then, there exists a Cartier divisor $D$ such that for each $0 \leq j \leq p-1$, we have an injective map
    \begin{equation} \label{injectivemap} \iota_j: F_* (\cO_X(jL)) \hookrightarrow \cO_X(D) ^{\oplus p^d}.\end{equation}
    Moreover, we may assume that $D$ is effective and defines a normal subscheme of $X$.
\end{lem}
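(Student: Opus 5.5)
We reduce everything to a statement about a single torsion-free coherent sheaf of rank $p^d$ and then absorb it into a large twist of a trivial bundle. Since $k$ is algebraically closed, hence perfect, and $X$ is a $d$-dimensional variety, the Frobenius $F$ is finite and $F_*\cO_X$ has generic rank $[K(X):K(X)^p]=p^d$. Consequently each $\cF_j:=F_*(\cO_X(jL))$, for $0\le j\le p-1$, is a torsion-free coherent $\cO_X$-module of generic rank $p^d$. Torsion-freeness holds because $\cO_X(jL)$ is invertible on an integral scheme, and $F$ is finite and dominant.

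Next we embed each $\cF_j$ into a free sheaf $\cO_X^{\oplus p^d}$ after a twist. Choose a very ample Cartier divisor $H$ on $X$. Since $\cF_j$ is torsion free of rank $p^d$, the generic fiber $\cF_j\otimes K(X)$ is a $K(X)$-vector space of dimension $p^d$. We pick an isomorphism $\cF_j\otimes K(X)\cong K(X)^{\oplus p^d}$. The images of finitely many local generators of $\cF_j$ on a finite affine cover have bounded poles, so there is a Cartier divisor $D_j$ with $\cF_j\hookrightarrow \cO_X(D_j)^{\oplus p^d}$. One can take $D_j=n_jH$ for $n_j\gg0$, since any Cartier divisor is dominated by a multiple of the ample $H$ up to an effective divisor.

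An equivalent, more sheaf-theoretic route uses the dual: $\cH om(\cF_j,\cO_X)$ is coherent, so $\cH om(\cF_j,\cO_X)(nH)$ is globally generated for $n\gg0$. Choosing $p^d$ general global sections gives a map $\cF_j\to \cO_X(nH)^{\oplus p^d}$ that is an isomorphism at the generic point, hence injective because $\cF_j$ is torsion free. This route is probably the cleanest to write, and the only point needing care is genericity. We get it from a Bertini/general-position argument on the generic fiber: $p^d$ general sections of a globally generated sheaf of rank $p^d$ span the generic fiber, because the evaluation map on global sections surjects onto the generic fiber.

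Finally, we take $n=\max_j n_j$, so a single $D=nH$ works for all $j$; composing with the inclusions $\cO_X(n_jH)\subset\cO_X(nH)$ given by effective divisors keeps the maps injective. For the last assertion we replace $nH$ by a linearly equivalent effective divisor, which changes nothing up to isomorphism. By Bertini's theorem for a very ample linear system on a normal variety (taking $n\gg0$ so that $nH$ is very ample), a general member $D\in|nH|$ is a normal subscheme: $D$ misses the codimension-$\ge2$ singular locus in codimension one, and it inherits $S_2$ from $X$ by Seidenberg-type results over an algebraically closed field. Since $X$ is normal, it is $R_1$ and $S_2$, and a general hyperplane section remains $R_1$ and $S_2$, hence normal, provided $d\ge2$. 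When $d=1$, $D$ is a reduced finite set of points, which is trivially normal.

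I expect the only genuine subtlety to be this Bertini normality statement in positive characteristic. I would cite Seidenberg's theorem (Flenner/Cumino–Greco–Manaresi for the general version) for very ample systems, which holds in all characteristics.
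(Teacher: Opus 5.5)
Your proposal is correct, and your second route is essentially the paper's proof. The paper also uses that $\mathscr{H}om_{\cO_X}(F_*\cO_X(jL),\cO_X(mH))$ is globally generated for $m \gg 0$, chooses $p^d$ global sections forming a basis at the generic point, gets injectivity from torsion-freeness, and takes a general member of $|mH|$, which is normal by the Bertini theorem of Flenner--O'Carroll--Vogel; it additionally rewrites this sheaf via Frobenius duality, which, as you note, is not needed since coherence suffices.
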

\begin{proof}
    Fix an ample divisor $H$ on $X$. Since $H$ is ample, there exists an $M \gg 0$ such that for all $m \geq M$ and all $0 \leq j \leq p-1$, the sheaf
    \[ \mathscr{H}om_{\cO_X}(F_* (\cO_X(jL)), \cO_X(mH) ) \isom F_* (\cO_X((1-p)K_X - jL )) \otimes \cO_X(mH)    \]
    is generated by its global sections. Here, we have used the duality isomorphism for the Frobenius as expalined in \cite[Section 4.1]{SchwedeSmithLogFanoVsGloballyFRegular}. In particular, we may assume that each of these sheaves is globally generated at the generic point of $X$. Let $\eta$ denote the generic point of $X$ and consider the following restriction map to the generic stalk:    
    \begin{equation} \label{globalgeneratedhom} \Hom_{\cO_X} (F_*(\cO_X(jL)), \cO_X(mH))  \to \Hom_{\cO_{X, \eta}} (F_* (\cO_X(jL)) _{\eta}), \cO_X(mH) _{\eta} ). \end{equation}
    Since $\mathscr{H} om _{\cO_{X}} \big( \,  (F_*(\cO_X(jL)), \cO_X(mH)) \, \big)$ is generically globally generated, this means that the image of the map in \autoref{globalgeneratedhom} generates $\Hom_{\cO_{X, \eta}} (F_* (\cO_X(jL))_{\eta}), \cO_X(mH) _{\eta} )$ as an $\cO_{X, \eta}$-module. Recall that $\cO_{X, \eta}$ is just the fraction field of $X$. Since $F_* (\cO_X(jL)) _{\eta})$ is a free $\cO_{X , \eta}$-vector space of rank $p^{d}$, we can choose $p^{d}$ maps $\phi_{1}, \dots, \phi_{p^{d}}$ in $\Hom_{\cO_X} (F_*(\cO_X(jL)), \cO_X(mH)) $ such that their images under the map in \autoref{globalgeneratedhom} forms a basis of $  \Hom_{\cO_{X, \eta}} (F_* (\cO_X(jL)) _{\eta}), \cO_X(mH) _{\eta} )$ over $\cO_{X, \eta}$. Thus, defining $\iota _{j,m}$ to be the product map 
    \[  \iota _{j,m}: = \phi_{1} \times \dots \times \phi_{p^{d}} : F_*(\cO_X(jL)) \rightarrow \cO_X(mH) ^{\oplus p^{d}} ,\]
    we see that $\iota_{j,m}$ is generically an isomorphism by construction. Furthermore, since $F_* (\cO_X(jL))$ is a torsion-free sheaf of rank $p^{d}$ over $\cO_{X}$, $\iota_{j,m}$ is injective since it is generically an isomorphism. This completes that proof of the claim that maps as in \autoref{injectivemap} exist for each $m \geq M$. Now since $H$ is ample, we may pick $m \gg 0$ so that we can find an effective divisor $D$ in the linear system $|mH|$. Since $X$ is assumed to be normal, by Bertini's theorem, a general such divisor $D$ is  normal by \cite[Corollary 3.4.14]{FlennerJoins}.
\end{proof}

\begin{dfn}
Let $\Delta$ be an effective $\bQ$-Weil divisor on $X$. For any Weil divisor $L$ on $X$ and $e\geq 1$, define the $k$-vector subspace $I^\Delta_e(L)$ of $H^0
(X, \cO_X(L))$ as follows:
\[I_e^\Delta(L) = \{ s \in H^0(L) \mid \varphi(F^e_* s) = 0 \text{ for all } \varphi \in \Hom_{\cO_X}(F^e_*\cO_X(\lceil (p^e-1)\Delta \rceil +L, \cO_X)\}\]
\end{dfn}

Recall that given the effective $\QQ$-divisor $\Delta$, a map $ \varphi \in \Hom_{\cO_X} (F^e_*\cO_X(\lceil (p^e -1) \Delta \rceil + D), \cO_X)$ can be naturally thought of as an element of $ \Hom_{\cO_X} (F^e_*\cO_X(D), \cO_X)$ via the inclusion $\Fe \cO_X (D) \hookrightarrow F^e_*\cO_X(\lceil (p^e -1) \Delta \rceil + D) $. We now recall a result relating various $I_e ^\Delta$ spaces via duality: 

\begin{lem}\cite[Lemma 2.18]{PandeFrobeniusVersionTiansAlphaInvariant}\label{Lem.PandeDuality}
Let $(X,\Delta)$ be a normal projective pair, and let $D$ be any Weil divisor on $X$. Then, denoting $D_1= (1-p^e)K_X-\lceil (p^e-1)\Delta\rceil-D$ and $D_2 = (1-p^e)K_X-\lceil (p^e-1)\Delta\rceil$ for any $e\geq 1$, we have a non-degenerate pairing \[\frac{H^0(D)}{I_e^{\Delta}(D)}\times \frac{H^0(D_1)}{I_e^{\Delta}(D_1)}\to \frac{H^0(D_2)}{I_e^{\Delta}(D_2)}\] obtained from multiplication (and reflexifying) global sections. In particular, \[\dim_k\frac{H^0(D)}{I_e^{\Delta}(D)}=
\dim_k\frac{H^0(D_1)}{I_e^{\Delta}(D_1)}.\]
\end{lem}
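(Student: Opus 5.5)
The plan is to make the pairing explicit through Grothendieck duality for the finite morphism $F^e$. We then observe that the target space has dimension at most one, so the pairing is a perfect bilinear form and the two dimensions agree.

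\textbf{Step 1 (duality).} Write $A = \lceil (p^e-1)\Delta\rceil + D$. By duality for Frobenius, as in \cite[Section 4.1]{SchwedeSmithLogFanoVsGloballyFRegular}, there is an isomorphism
\[\mathscr{H}om_{\cO_X}(F^e_*\cO_X(A),\cO_X)\isom F^e_*\cO_X((1-p^e)K_X - A) = F^e_*\cO_X(D_1).\]
Under it, a global section $t\in H^0(D_1)$ corresponds to the map
\[\varphi_t\colon F^e_* s\mapsto \mathrm{Tr}^e\big(F^e_*(st)\big).\]
Here $st$ is the reflexified product in $H^0\big((1-p^e)K_X-\lceil (p^e-1)\Delta\rceil\big)\subseteq H^0((1-p^e)K_X)$, and $\mathrm{Tr}^e$ is the Frobenius trace. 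Since all sheaves involved are reflexive, I will verify this identification on the regular locus $U$, whose complement has codimension at least $2$, and then extend. There the divisors are Cartier and everything reduces to the standard statement for the trace of $F^e$ on a regular scheme.

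\textbf{Step 2 (identify the target).} Apply Step 1 with $D=D_2$. Then $A=(1-p^e)K_X$, and the corresponding maps are given by sections of $H^0(X,\cO_X)=k$, because $X$ is a normal projective variety. Hence $I_e^\Delta(D_2)=\ker\big(\mathrm{Tr}^e\colon H^0(D_2)\to H^0(\cO_X)=k\big)$, and $\mathrm{Tr}^e$ induces an injection
\[H^0(D_2)/I_e^\Delta(D_2)\hookrightarrow k.\]
In particular this quotient has dimension at most one.

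\textbf{Step 3 (well-definedness and non-degeneracy).} Define the pairing by $(s,t)\mapsto st \bmod I_e^\Delta(D_2)$, which under Step 2 is $(s,t)\mapsto \mathrm{Tr}^e(st)$. Step 1 gives the key equivalence: $s\in I_e^\Delta(D)$ if and only if $\varphi_t(F^e_*s)=\mathrm{Tr}^e(st)=0$ for all $t\in H^0(D_1)$.

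Now note that $(1-p^e)K_X-\lceil (p^e-1)\Delta\rceil - D_1 = D$. So the same equivalence holds with the roles of $D$ and $D_1$ exchanged: $t\in I_e^\Delta(D_1)$ if and only if $\mathrm{Tr}^e(st)=0$ for all $s\in H^0(D)$.

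These two equivalences give three things at once. The pairing descends to the quotients. Its left kernel is exactly $I_e^\Delta(D)$, and its right kernel is exactly $I_e^\Delta(D_1)$.

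\textbf{Step 4 (dimension count).} If the target in Step 2 is zero, then both quotients vanish, by non-degeneracy. Otherwise the target is $k$, and the pairing is a non-degenerate $k$-bilinear form between two finite-dimensional spaces. Non-degeneracy then gives injections of each quotient into the dual of the other, so the two dimensions are equal.

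\textbf{Main obstacle.} The step I expect to need the most care is Step 1. One must check that the duality isomorphism is compatible with the rounding $\lceil (p^e-1)\Delta\rceil$ and with reflexification of products of Weil divisorial sheaves. The plan is to handle this by restricting to the regular locus and using that reflexive sheaves, and maps between them, are determined in codimension one.
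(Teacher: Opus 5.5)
Your proposal is correct and follows essentially the route behind the cited \cite[Lemma 2.18]{PandeFrobeniusVersionTiansAlphaInvariant}. Your Step 1 is exactly the trace description of $I_e^{\Delta}$, which the paper records as the case of trivial $\fad$ in \autoref{lem:multiplicationandtracemap}, i.e.\ \cite[Lemma 2.17]{PandeFrobeniusVersionTiansAlphaInvariant}; applied to $D$, to $D_1$ and to $D_2$, it shows that the target is at most one-dimensional and that the left and right kernels of the multiplication pairing are exactly $I_e^{\Delta}(D)$ and $I_e^{\Delta}(D_1)$. One cosmetic point: $(s,t)\mapsto \mathrm{Tr}^e(F^e_*(st))$ is $p^{-e}$-linear in each variable, so either run the dimension count with the $k$-bilinear pairing valued in the one-dimensional space $H^0(D_2)/I_e^{\Delta}(D_2)$, as your Step 3 already defines it, or note that this is harmless because $k$ is perfect.
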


Now we estimate the difference between the codimension of $I_e(mL)$ when we perturb  $mL$ by a Cartier divisor. When $L$ is ample, this was done in \cite[Proposition 2.27]{PandeFrobeniusVersionTiansAlphaInvariant} and we treat the case of big divisors here.

\begin{Pn}\label{Pn:twistedFsig}
Let $X$ be a normal projective variety over $k$ of positive dimension and $L$ a big divisor on $X$. Assume $H^0 (X, \cO_X) = k$. Fix a Cartier divisor $D$ on $X$. Then, there exists a constant $C >0$ (depending only on $D$ and $L$) such that
$$ \left| \dim_{k}\frac{H^{0}(mL)}{I_{e}(mL)} - \dim_{k}\frac{H^{0}(mL+D)}{I_{e}(mL+D)} \right| \leq C p^{e(\dim(X) - 1)} $$
for all $m \geq 0$ and $e>0$.

\end{Pn}

\begin{proof}

The case when $m=0$ is straightforward since it is bounded by $1+\dim_kH^0(D)$ which is a constant. Hence, we may assume $m\geq 1$.  First, we consider the case when $D$ is effective. By using the natural map $\cO_{X}(mL) \to \cO_{X}(mL+D)$, we will view $H^{0}(mL)$ as a subspace of $H^{0}(mL+D)$. Let $J_{e}(mL)$ denote the subspace $H^{0}(mL) \cap I_{e}(mL+D)$. By \cite[Lemma 4.12]{LeePandeFsigAmpCone}, we see that $I_{e}(mL) \subset J_{e}(mL)$. Furthermore, recall that we have an inclusion \begin{equation}\label{eqn.IeJeComparison}\frac{H^0 (mL)}{J_e(mL)} \hookrightarrow \frac{H^0(mL+D)}{I_e(mL+D)}\end{equation} coming from the inclusion $H^0(mL)\hookrightarrow H^0(mL+D)$. 
Now, the cokernel of \autoref{eqn.IeJeComparison} is $H^0(mL+D)/(H^0(mL)+I_e(mL+D))$, so this induces
\begin{equation} \label{eqn.DimensionComparison}\dim_{k}\frac{H^{0}(mL+D)}{I_{e}(mL+D)} = \dim_{k}\frac{H^{0}(mL)}{J_{e}(mL)} + \dim_{k}\frac{H^{0}(mL+D)}{H^0 (mL) + I_{e}(mL+D)}.\end{equation}
Using \autoref{eqn.DimensionComparison} and the triangle inequality, we obtain that
\begin{equation} \label{firstestimate} 
\begin{split}
 & \left| \dim_{k}\frac{H^{0}(mL)}{I_{e}(mL)} - \dim_{k}\frac{H^{0}(mL+D)}{I_{e}(mL+D)} \right| \\ 
 = & \left|\dim_k \frac{H^0(mL)}{I_e(mL)}-\dim_k\frac{H^0(mL)}{J_e(mL)} - \dim_k\frac{H^0(mL+D)}{H^0(mL)+I_e(mL+D)}\right|
 \\ \leq &\left|\dim_k \frac{H^0(mL)}{I_e(mL)}-\dim_k\frac{H^0(mL)}{J_e(mL)} \right| + \dim_k\frac{H^0(mL+D)}{H^0(mL)+I_e(mL+D)}
\\  \leq &  \dim_{k}\frac{J_{e}(mL)}{I_{e}(mL)}+\dim_{k} \frac{H^{0}(mL+D)}{H^{0}(mL)} 
\end{split}
\end{equation}
for all $m, e>0$. Now, we bound each term in the above inequality. To compute the first term, we fix an $e>0$ and set $\Delta_e = \frac{1}{p^e -1} D$. Then, we observe that the subspace $J_e (mL)$ is exactly the same as $I_e ^{\Delta_e} (mL)$ by construction. Moreover, we also similarly have 
\begin{equation} \label{dualJe} I_e ^{\Delta_e} ((1-p^e)K_X - mL-D) = H^0 ((1-p^e)K_X - mL -D) \cap I_e ((1-p^e)K_X - mL) .\end{equation}
Thus, by \autoref{Lem.PandeDuality}, we have
\[ \dim_k \frac{H^0 (mL)}{I_e (mL)}  = \dim_k \frac{H^0 ((1-p^e)K_X - mL)}{I_e ((1-p^e)K_X - mL)} \]
and similarly,
\[ \dim_k \frac{H^0 (mL)}{J_e (mL)}  = \dim_k \frac{H^0 ((1-p^e)K_X - mL -D)}{I_e ^{\Delta_e} ((1-p^e)K_X - mL-D)} .\]
By \autoref{dualJe}, we see 
the natural map from $H^0 ((1-p^e)K_X - mL -D)$ to $ H^0 ((1-p^e)K_X - mL)$ restricts to an \emph{injective} map
\[ \frac{H^0 ((1-p^e)K_X - mL -D)}{I_e ^{\Delta_e} ((1-p^e)K_X - mL-D)} \hookrightarrow \frac{H^0 ((1-p^e)K_X - mL)}{I_e ((1-p^e)K_X - mL)} .\]
By considering the cokernel of this map, we get that
\begin{equation} \label{secondestimate}
   \dim_{k}\frac{J_{e}(mL)}{I_{e}(mL)} = \dim_{k}\frac{H^{0}(mL)}{I_{e}(mL)} - \dim_{k}\frac{H^{0}(mL)}{J_{e}(mL)} \leq  \dim_{k} \frac{H^{0}((1-p^e)K_{X} -mL)}{H^{0}((1-p^e)K_{X} -mL-D)}.
\end{equation}

Pick an $M \gg 0$ such that following conditions hold for the linear system $|ML|$: there is an effective divisor $E_2 \in |ML|$ and an effective Weil-divisor $E_3$ such that $E_2 - E_3 \sim -K_X$. We can do this by choosing $M$ such that $H^0 (X,\cO_X(K_X+ ML)) \neq 0$ since $\cO_X(K_X+ML)$ is generically globally generated.


Now, we pick a very ample divisor $H \subset X$ such that there is a divisor $E \in |H|$ that defines a normal subscheme of $X$. This can be done by Bertini's theorems for normal varieties \cite[Corollary 3.4.14]{FlennerJoins}. Moreover, we may do this in such a way that $E$ does not share components with $E_2$ or $E_3$ and there exists an effective Cartier divisor $E_1$ such that $D + E_1 \sim E$. We may also assume, by enlarging $M$ chosen if necessary, that the linear systems $|mL|$ for $m \geq M$ contain a divisor not vanishing along $E$. This is possible since $L$ is big. 
Now, using the divisor $E_1$, we get that
\[  \dim_{k} \frac{H^{0}((1-p^e)K_{X} -mL)}{H^{0}((1-p^e)K_{X} -mL-D)} \leq \dim_k  \frac{H^{0}((1-p^e)K_{X} -mL)}{H^{0}((1-p^e)K_{X} -mL-E)}.  \]
Next, since the divisor $E$ does not share  components with $E_2, E_3 $, for any $e \geq 1$, we may restrict the Weil divisor $(1-p^e) K_X - mL \sim (p^e-1)(E_2 - E_3) - mL$ to $E$ (as explained in \autoref{restrictingWeildivisors}). In conclusion, using \autoref{restrictingtonormaldivisorexactsequence}, we get
\[ \dim_{k}\frac{J_{e} (mL)}{I_{e} (mL)} \leq \dim_{k} H^{0}((p^e-1)(E_2 - E_3)|_E -mL|_E ).  \]
Furthermore, since by assumption, $mL$ also admits a section that does not share a component with $E$, we in fact have:
\begin{equation} \label{thirdestimate}
    \dim_{k} \frac{J_{e} (mL)}{I_{e} (mL)} \leq \dim_k H^0 ((p^e -1) E_2 |_E)  = \frac{\vol(E_2|_E)}{(d-1)!} p^{e(d-1)} + O(p^{e(d-2)})
\end{equation}
 and 
 \begin{equation} \label{fourthestimate}
     \dim_{k} \frac{H^{0}(mL+D)}{H^{0}(mL)}  \leq \frac{\vol(L|_D)}{(d-1)!} m^{d-1} + O(m^{d-2})
 \end{equation}
 
 Finally, by \cite[Theorem 4.9]{LeePandeFsigAmpCone}, we may pick a constant $C_{2}>0$ such that $H^{0}(mL) = I_{e}(mL)$ and $H^{0}(mL+D) = I_{e}(mL+D)$ for $m >C_{2}p^{e}$. Therefore, to prove the Proposition, it is enough to consider the case when $m \leq C_{2} p^{e}$. In this case, the Proposition now follows by putting together inequalities in \ref{firstestimate}, \ref{secondestimate}, \ref{thirdestimate} and \ref{fourthestimate}. This completes the proof of the Proposition in the case when $D$ is effective.

 More generally, when $D$ is not necessarily effective, since $L$ is big, we first pick a fixed number $r \gg 0$ such that $rL$ and $D+ rL$ are both effective. Then, for any $ e \geq 1$ and $m > r$, we have
\begin{equation*}
\begin{split}
 \left| \dim_{k}\frac{H^{0}(mL)}{I_{e} (mL)} - \dim_{k}\frac{H^{0}(mL+D)}{I_{e}(mL+D)} \right| \leq & \left| \dim_{k}\frac{H^{0}(mL)}{I_{e} (mL)} - \dim_{k}\frac{H^{0}((m-r)L)}{I_{e}  ((m-r)L)} \right| \\
 + &\left| \dim_{k}\frac{H^{0}((m-r)L)}{I_{e}  ((m-r)L)} - \dim_{k}\frac{H^{0}((m-r)L + rL+D)}{I_{e}  ((m-r)L+rL+D)} \right| .
 \end{split}   \end{equation*}
 Now, we may apply the previous case of the Proposition (since both $D+rL$ and $rL$ are effective) to each of the two terms in the above inequality. Since $r$ was independent of $e$, this completes the proof of the Proposition.
\end{proof}

\begin{proof}[Proof of \autoref{thm:Fsigbigexists}]
Set \[a_e = \sum_{m=0}^\infty \dim_k\frac{H^0(mL)}{I_e(mL)}.\] To show the existence of $\s_X(L)$, using \autoref{Lem.LimitExists}, we need to check the following: \begin{enumerate}
\item $\displaystyle\frac{a_e}{p^{e(d+1)}}$ is bounded above.
\item there is some constant $C$ such that $\displaystyle\frac{a_e}{p^{e(d+1)}}\leq \frac{a_{e+1}}{p^{(e+1)(d+1)}}+\frac{C}{p^e} \text{ for all }e \in \bN.$
\end{enumerate}
We first prove part (a). Using \autoref{Thm:DegenSpaceEqualsWholeSpace}, we fix $C_0>0$ such that $H^0(mL) = I_e(mL)$ whenever $m> C_0p^e$. Then, \[ a_e = \sum_{m=0}^{ C_0p^e} \dim_k\frac{H^0(mL)}{I_e(mL)} \leq \sum_{m=0}^{C_0p^e}\dim_k H^0(mL) \leq \tilde{C}\vol(L)p^{e(d+1)} + O(p^{ed}) \]
for some $\tilde{C} >0$, where the last inequality uses \autoref{lem:sumvol}.
Dividing by $p^{e(d+1)}$, we see that $a_e/p^{e(d+1)}$ is bounded above. 

For part (b), choose a Cartier divisor $D$ and a injective map $\iota_j : F_*\cO_X(jL)\to \cO_X(D)^{\oplus p^d}$ for each $0 \leq j \leq p -1$, as in \autoref{lem:injectivemap}.
We again fix a constant $C_0>0$ such that $H^0(mL) = I_e(mL)$ for all $m>C_0p^e$. Then \[\begin{split} p^d\dim_k \frac{H^0(\lfloor \frac{m}{p}\rfloor L)}{I_e(\lfloor \frac{m}{p} \rfloor L)} & = p^d \left(\dim_k \frac{H^0(\lfloor \frac{m}{p}\rfloor L)}{I_e(\lfloor \frac{m}{p} \rfloor L)} - \dim_k \frac{H^0(\lfloor \frac{m}{p}\rfloor L + D)}{I_e(\lfloor \frac{m}{p}\rfloor L + D)} \right) + p^d\dim_k \frac{H^0(\lfloor \frac{m}{p}\rfloor L + D)}{I_e(\lfloor \frac{m}{p}\rfloor L + D)} \\ & \leq p^d C_1p^{e(d-1)} +  p^d\dim_k \frac{H^0(\lfloor \frac{m}{p}\rfloor L + D)}{I_e(\lfloor \frac{m}{p}\rfloor L + D)} \text{  (by \autoref{Pn:twistedFsig})} \\ &\leq C_1 p^{e(d-1)+d}+\dim_k \frac{H^0(mL)}{I_{e+1}(mL)} + C_{2}p^{(e-1)(d-1)} \text{ (by \autoref{Lem.LimitComparison})}.\end{split}\] 
Dividing by $p^{(e+1)(d+1)}$ for both sides, we obtain \begin{equation}\label{eqn.Fsigexistscomputation}\frac{1}{p^{e(d+1) + 1}}\dim_k \frac{H^0(\lfloor \frac{m}{p}\rfloor L)}{I_e(\lfloor \frac{m}{p} \rfloor L)} \leq \frac{1}{p^{(e+1)(d+1)}}\dim_k \frac{H^0(mL)}{I_{e+1}(mL)} +
\frac{C_1}{p^{2e+1}} + O(\frac{1}{p^{2e + 2d +2}} ).\end{equation} 
Moreover, we pick a constant $C_3 > 0$ such that the error term $\frac{C_1}{p^{2e+1}} + O(\frac{1}{p^{2e + 2d +2}} ) \leq \frac{C_3}{p^{2e+1}}$ for all $m$.

 We now sum both sides of \autoref{eqn.Fsigexistscomputation} over all $m$. Note that the terms of this sum are non-zero only for $m \leq p\lceil C_0 p^{e} \rceil : = pM$. Thus, the left-hand side of the sum may be written as \[\frac{1}{p^{e(d+1) + 1}}\sum_{m=0}^{pM}\dim_k \frac{H^0(\lfloor \frac{m}{p}\rfloor L)}{I_e(\lfloor \frac{m}{p} \rfloor L)} = \frac{p}{p^{e(d+1) + 1}} \sum_{m=0}^{M}\dim_k \frac{H^0(mL)}{I_e(mL)} = \frac{a_e}{p^{e(d+1)}}\]
 On the other hand, summing up the right-hand side of \autoref{eqn.Fsigexistscomputation} over $m = 0, \dots, Mp$, we have
\[\begin{split}  \frac{1}{p^{(e+1)(d+1)}}\sum_{m=0}^{Mp}\dim_k \frac{H^0(mL)}{I_{e+1}(mL)} +\sum_{m=0}^{Mp}
\frac{C_3}{p^{2e+1}} & =\frac{a_{e+1}}{p^{(e+1)(d+1)}} +
\frac{C_3(Mp+1)}{p^{2e+1}} \\  \leq \frac{a_{e+1}}{p^{(e+1)(d+1)}}+ \frac{C_3 (C_0 p^{e+1} + p)}{p^{2e+1}} & \leq \frac{a_{e+1}}{p^{(e+1)(d+1)}}+ \frac{2C_0C_3}{p^{e}}. \end{split}\]
Hence, we have shown that for all $e \geq 1$, we have \[\frac{a_e}{p^{e(d+1)}}\leq \frac{a_{e+1}}{p^{(e+1)(d+1)}} + \frac{C}{p^e}\]
where $C = 2 C_0 C_3$. Applying \autoref{Lem.LimitExists} completes the proof that the limit defining $\s_X(L)$ exists.
\end{proof}

\begin{Pn}[Transformation rule of $F$-signature for big divisors]\label{Pn:TransformationRuleFsig}
    Let $X$ be a normal projective variety over $k$ and $L$ be a big Cartier divisor on $X$. Then, for any integer $n \geq 1$, we have the transformation rule
    \[ \s_X(L^n) = \frac{\s_X(L)}{n}.\]
\end{Pn}
\begin{proof}
The case $n=1$ is immediate, so we may assume $n\geq 2$. To show the identity, we will rearrange the sums as follows. \[\sum_{m=0}^\infty \dim_k\frac{H^0(mL)}{I_e(mL)} = \sum_{i=0}^{n-1}\sum_{q=0}^{\infty}\dim_k\frac{H^0(nqL + iL)}{I_e(nqL+iL)}\]
By \autoref{Pn:twistedFsig}, for each $i = 1,2, \dots, n-1$, there exists a constant $C_{i}>0$ such that \[\left|\dim_k\frac{H^0(nqL+iL)}{I_e(nqL+iL)}-\dim _k\frac{H^0(nqL)}{I_e(nqL)}\right| < C_{i} p^{e(\dim (X) -1)}\] for all $e>0, m\geq0$. Let $\tilde{C} = \max{C_i}$. Furthermore, by \autoref{Thm:DegenSpaceEqualsWholeSpace}, there is a constant $C$ such that $H^0(nqL) = I_e(nqL)$ and $H^0(nqL+iL) = I_e(nqL+iL)$ for all $q>Cp^e/n$ and $i=0,\dots, n-1$. Hence,
\[\begin{split} &\left|\sum_{m=0}^{\infty} \dim_k\frac{H^0(mL)}{I_e(mL)}-n\sum_{m=0}^{\infty} \dim_k\frac{H^0(mnL)}{I_e(mnL)}\right|\\
=&\left|\left(\sum_{i=0}^{n-1}\sum_{q=0}^{\infty} \dim_k\frac{H^0(nqL+iL)}{I_e(nqL+iL)}-\dim_k\frac{H^0(nqL)}{I_e(nqL)}\right)\right|\\
\leq & \sum_{i=0}^{n-1}\sum_{q=0}^{\lfloor Cp^e/n\rfloor} \left|\dim_k\frac{H^0(nqL+iL)}{I_e(nqL+iL)}-\dim_k\frac{H^0(nqL)}{I_e(nqL)}\right| \\
\leq & n \left\lfloor\frac{Cp^e}{n}\right\rfloor\tilde{C}p^{e(\dim(X) -1)} \leq Mp^{e\dim(X)} \text{ for some constant $M>0$.} 
\end{split}\]
Dividing both sides by $p^{e(\dim(X)+1)}$ and taking $e\to\infty$ we have \[\s_X(L) -n\s_X(L^{n}) = \lim_{e\to\infty}\frac{1}{p^{e(\dim(X)+1)}}\left(\sum_{m=0}^{\infty} \dim_k\frac{H^0(mL)}{I_e(mL)}-n\sum_{m=0}^{\infty} \dim_k\frac{H^0(mnL)}{I_e(mnL)}\right) = 0.\]
\end{proof}

\autoref{Pn:TransformationRuleFsig} allows us to define the $F$-signature of a big $\Q$-Cartier $\Q$-divisor.
\begin{dfn} \label{dfn:FsigofbigQdiv}
Let $X$ be a normal projective variety over $k$ and $L$ be a big $\bQ$-Cartier $\Q$-divisor. Then, we define \[\s_X(L):=r\s_X(L^r)\]
for any $r > 0$ such that $rL$ is a Cartier divisor. This is well-defined by \autoref{Pn:TransformationRuleFsig}. 
\end{dfn}

\begin{rem}
    We will prove in \autoref{thm:positivityofFsigbig} that a normal projective variety $X$ is globally $F$-regular if and only if the $F$-signature $\s_X (L)$ is positive for \emph{some} (equivalently, any) big divisor $L$. 
\end{rem}

\begin{rem}
If $X$ is globally $F$-regular, then there are no numerically trivial $\ZZ$ or $\QQ$-divisors over $X$. In other words, numerical equivalence on $X$ is the same as linear equivalence \cite[Theorem 3.4]{LeePandeFsigAmpCone}. Hence, we regard the $F$-signature function $\s_X$ as a well-defined function on the rational big cone of $X$.
\end{rem}

\begin{Pn}\label{Prop:signaturevolumecomparison}
Let $X$ be a $d$-dimensional normal globally $F$-regular variety over $k$. Fix a norm $\| \cdot \|$ on the N\'eron-Severi space of $X$. Then, there exists a constant $C$ such that for any big $\bQ$-divisor $L$, \[ \s_X(L) \leq \frac{C\vol(L)}{\|L\|^{d+1}}.\] 
\end{Pn}

\begin{proof}
It suffices to show the existence of such $C$ for integral big divisors due to the transformation rule. By \autoref{Thm:DegenSpaceEqualsWholeSpace}, there exists $C_1>0$ such that for any big divisor $L$, $I_e(mL) = H^0(mL)$ for all $m>\frac{Cp^e}{\|L\|}.$ Hence, we may write 
\[\begin{split} \s_X(L) & = \lim_{e\to\infty}\frac{1}{p^{e(d+1)}}\sum_{m=0}^\infty \dim_k\frac{H^0(mL)}{I_e(mL)}\\ & = \lim_{e\to\infty} \frac{1}{p^{e(d+1)}}\sum_{m=0}^{\lfloor C_1p^e/\|L\|\rfloor}\dim_k\frac{H^0(mL)}{I_e(mL)} \\ &\leq \lim_{e\to\infty}\frac{1}{p^{e(d+1)}}\sum_{m=0}^{\lfloor C_1p^e/\|L\|\rfloor}\dim_k H^0(mL) \\ &\leq \lim_{N\to\infty} \frac{C_1^{d+1}}{N^{d+1}\|L\|^{d+1}}\sum_{m=0}^N\dim_k H^0(mL) \\ & = \frac{C_1^{d+1}}{(d+1)!\|L\|^{d+1}}\vol(L) \text{ (by \autoref{lem:sumvol}).}\end{split}\]
Therefore, the theorem follows.
\end{proof}

\subsection{Continuity of $F$-signature function on the big cone} So far, we have defined the $F$-signature function on the cone of big $\Q$-divisors over $X$, denoted $\text{Big} _\QQ (X)$. In this subsection, we study the continuity properties of this function which will allow us to extend the $F$-signature to all big $\RR$-divisors. This is a generalization of \cite{LeePandeFsigAmpCone} where we studied the $F$-signature function on the ample cone. Throughout, we fix a norm $\| \cdot \|$ on the N\'eron-Severi space of $X$.

\begin{thm}\label{thm:FsigLocallyLipschitz}
Let $X$ be a globally $F$-regular projective variety over $k$. There is a well-defined and continuous $F$-signature function on the big cone of $X$
\[ \s_X: \text{Big} (X) \to \mathbb{R}_{\geq 0}\]
extending \autoref{dfn:FsigofbigQdiv} on big $\QQ$-divisors and the $F$-signature function from \cite{LeePandeFsigAmpCone} on the ample cone. Moreover, $\s_X$ is locally Lipschitz continuous extension on the big cone satisfies the identity $\s_X( tL) = \frac{\s_X (L)}{t}$ for all $t >0$ and $L \in \text{Big} (X)$. Furthermore, for any closed cone $\mathcal{C}\subseteq N^1(X)_\bR$ with $\mathcal{C}\setminus\{0\}\subseteq \mathrm{Big}(X)$, there exists $M$ such that 
\[|\s_X(L)-\s_X(L')|\leq \frac{M}{\min\{\|L\|,\|L'\|\}^2}\|L-L'\|\] for all nonzero $L,L'\in\mathcal{C}$.
In particular, for any $\epsilon>0$, the $F$-signature function is Lipschitz continuous on $\{L\in\mathcal{C}\mid \|L\|\geq \epsilon\}$.
\end{thm}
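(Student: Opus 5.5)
The plan is to mimic the ample-cone argument of \cite{LeePandeFsigAmpCone}. I will first prove a Lipschitz-type estimate on rational points of a closed cone $\mathcal{C}$ with $\mathcal{C}\setminus\{0\}\subseteq \mathrm{Big}(X)$, and then extend by density. The homogeneity $\s_X(tL)=\s_X(L)/t$ holds on $\mathrm{Big}_\QQ(X)$ by \autoref{dfn:FsigofbigQdiv} and \autoref{Pn:TransformationRuleFsig}. So it suffices to prove the estimate for integral Cartier $L,L'$ in $\mathcal{C}$ and then rescale. Once we have an inequality of the form $|\s_X(L)-\s_X(L')|\le M\|L-L'\|/\min\{\|L\|,\|L'\|\}^2$ on rational classes, $\s_X$ is uniformly continuous on $\{L\in\mathcal{C}\cap N^1(X)_\QQ : \|L\|\ge\epsilon\}$. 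Since the big cone is covered by interiors of such cones, $\s_X$ extends uniquely to a continuous, locally Lipschitz function on $\mathrm{Big}(X)$. The extension satisfies the scaling identity by continuity, and it agrees with the ample-cone function of \cite{LeePandeFsigAmpCone} because both are continuous and agree on $\Amp_\QQ(X)$. Numerical and linear equivalence agree on a globally $F$-regular $X$, so $\s_X$ is well defined on classes.

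For the key estimate, fix finitely many Cartier divisors $B_1,\dots,B_\rho$ whose classes form a basis of $N^1(X)$, and write $L'=L+\sum a_iB_i$ with integers $a_i$. Then $mL'=mL+\sum m a_i B_i$. The naive use of \autoref{Pn:twistedFsig} gives constants depending on $mD$, so instead I will compare through effective perturbations. Since $\mathcal{C}$ is a closed cone inside the big cone, there is a fixed ample class $A$ and $\delta>0$ such that $L-\delta\|L\| A$ is big for every $L\in\mathcal{C}$, uniformly. Given $L'$ close to $L$, I will write
\[ mL'=mL+mE_+-mE_-, \]
where $E_\pm$ are effective, with norms $O(\|L-L'\|)$, coming from $\pm$ basis elements made effective by adding a multiple of a fixed big divisor. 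Then I will use \autoref{lem:Ieeffective} together with the restriction argument in the proof of \autoref{Pn:twistedFsig}, namely the estimates \autoref{firstestimate}--\autoref{fourthestimate}. The aim is the bound
\[\Bigl|\dim_k\tfrac{H^0(mL)}{I_e(mL)}-\dim_k\tfrac{H^0(mL+mE)}{I_e(mL+mE)}\Bigr|\le C\,m\|E\|\,(p^{e}+m)^{d-1}. \]
Here $C$ depends only on $\mathcal{C}$, because the restriction to $E$ contributes $\dim H^0$ of a divisor on a $(d-1)$-dimensional scheme that is linear in $m\|E\|$ and in $p^e$. Summing over $m\le C_1p^e/\min\{\|L\|,\|L'\|\}$, which is allowed by \autoref{Thm:DegenSpaceEqualsWholeSpace}, and dividing by $p^{e(d+1)}$ should give a bound of order $\|L-L'\|\cdot(\text{power of }1/\min\|L\|)$. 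Finally, I will use homogeneity to normalize $\|L\|=1$ and reduce the exponent of $1/\min\|L\|$ to $2$.

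The main obstacle is the uniformity of the constant in \autoref{Pn:twistedFsig}. That proposition's constant depends on $D$ and $L$ through the auxiliary choices $M$, $E_2$, $E_3$ and the normal divisor $E$, and here $D=mE$ grows with $m$. To handle this, I will fix these auxiliary choices once for the whole cone $\mathcal{C}$. I will choose $E_2-E_3\sim -K_X$ using a fixed ample divisor rather than $ML$, and fix one very ample normal $E$ dominating all the $B_i$. Then I will bound the two error terms by asymptotic Riemann--Roch on $E$ for divisors of the form $(p^e-1)E_2|_E+m(\text{bounded combination})|_E$. These are controlled by a polynomial in $p^e$ and $m$ with coefficients that are linear in $\|L-L'\|$. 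Verifying this linearity, rather than just boundedness, is the delicate point.
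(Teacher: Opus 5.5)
Your architecture is viable and differs from the paper's: a per-$m$ codimension estimate in an arbitrary direction, summed over $m\le C_1p^e/\|L\|$. But the one estimate that carries all the content is not delivered by the mechanism you describe. You need
\[\Bigl|\dim_k\tfrac{H^0(mL)}{I_e(mL)}-\dim_k\tfrac{H^0(mL+mE_+)}{I_e(mL+mE_+)}\Bigr|\le C\,m\|L-L'\|\,p^{e(d-1)}\qquad\text{for } m\le C_1p^e/\|L\|,\]
with $C$ independent of $L$. There are three problems with your sketch of it.
\begin{enumerate}
\item In \autoref{Pn:twistedFsig} both error terms have the form $\dim_k H^0(G)/H^0(G-D)$, with $G=(1-p^e)K_X-mL$ (after duality) or $G=mL+D$. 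A single restriction to a fixed normal $E$ controls this only when $D\le E$ up to linear equivalence. An $E$ dominating the $B_i$ does not dominate $D=mE_+$, which grows with $m$. The fix is to arrange $E_+\le cE_0$ (up to linear equivalence), with $c=O(\|L-L'\|)$ and $E_0$ a fixed normal very ample divisor, and to peel off one copy at a time:
\[\dim_k\frac{H^0(G)}{H^0(G-mE_+)}\le\sum_{j=0}^{mc-1}h^0\bigl(E_0,\cO_{E_0}((G-jE_0)|_{E_0})\bigr).\]
Linearity in $m\|L-L'\|$ is then just the number of summands.
\item Asymptotic Riemann--Roch gives asymptotics for multiples of a fixed divisor. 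It does not give bounds uniform over the unbounded family $(G-jE_0)|_{E_0}$ that arises as $L$ runs over integral classes. Those norms are unbounded, so ``normalize $\|L\|=1$'' is incompatible with integrality.
\item The device in \autoref{Pn:twistedFsig} for discarding $-mL|_E$ (a member of $|mL|$ not containing $E$ for $m\ge M$) is chosen for one $L$. It is not available uniformly.
\end{enumerate}

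The missing input is a uniform degree bound on the fixed variety $E_0$. For very ample $A$ there is $C$ with $h^0(E_0,\cO_{E_0}(B))\le C\bigl(1+\max\{0,B\cdot A^{d-2}\}\bigr)^{d-1}$ for every Weil divisor $B$ on $E_0$; this follows by cutting down with general members of $|A|$, using the left exact sequences of \autoref{restrictingWeildivisors}. Combine it with $-K_X\sim E_2-E_3$, $E_3|_{E_0}\ge 0$, $E_0^2\cdot A^{d-2}\ge 0$, and $L\cdot E_0\cdot A^{d-2}\ge 0$ for pseudo-effective $L$. Then each summand is at most $Cp^{e(d-1)}$ in the first error term, and at most $C(m\|L\|+mc)^{d-1}\le C'p^{e(d-1)}$ in the second. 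So your factor should be $(p^e+m\|L\|)^{d-1}$, not $(p^e+m)^{d-1}$. Summing yields $|\s_X(L)-\s_X(L')|\le M\|L-L'\|/\min\{\|L\|,\|L'\|\}^2$ directly, with the exponent $2$ coming from $\sum_{m\le C_1p^e/\|L\|}m$ and no normalization step.

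So repaired, this is a genuinely different proof. The paper never needs \autoref{Pn:twistedFsig} uniformly in $L$. It estimates only effective directions $L'=L+\frac1nH$ via \autoref{lem.BoundsForFsigBigDiv}, which uses bigness of $nL-bH$, so the error is governed by volume differences and $\s_X(L)/b$ (\autoref{Prop:signaturevolumecomparison}). It reaches arbitrary directions by the convex-geometric \autoref{lem:LipschitzContinuousExtension}. It gets the $1/\min^2$ form from homogeneity and compactness (\autoref{Pn:LocallyLipschitzwithTransformationRule}, \autoref{Thm:LipschitzAwayFromZero}). Your route handles all directions at once and stays at the level of codimension counts, at the cost of the uniform $h^0$ bound as a new ingredient.
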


\begin{Not}
We denote $C\subseteq \mathbb{R}^k$ as \emph{a pointed convex cone} if it satisfies 
\begin{itemize} 
\item $\mathrm{int}(C)$ is not empty
\item for each $x\in C$ and $\lambda \in \mathbb{R}_{>0}$, $\lambda x\in C$
\item for each $x,y\in C$, $x+y\in C$
\item $\bar{C}\cap (-\bar{C}) = \{0\}$ where $\bar{C}$ is the closure of $C$.
\end{itemize} Furthermore, we denote $C_\bQ := C\cap \mathbb{Q}^k$ be rational points in $C$.
\end{Not}

 We begin with the following useful lemma, which is a convex-geometry variant of \cite[Theorem 4.2]{LeePandeFsigAmpCone}.

\begin{lem}\label{lem:LipschitzContinuousExtension}
Let $C\subset \bR^k$ be an open pointed convex cone and $\| \|$ be a fixed norm on $\bR^k$. Let $f:C_\bQ\to \bR$ be a function defined on nonzero rational points on $C$ such that for each real $z\in C$, there exist constants $A(z)>0$ and $R(z)>0$ such that if $x,y\in C_\bQ\cap B_{R(z)}(z)$ with $x-y\in C$, then \[|f(x)-f(y)| < A(z)\|x-y\|.\] Then, $f$ is locally Lipschitz continuous on $C$ (i.e., even when $x-y$ is not contained in $\mathrm{int}(C)$). As a consequence $f$ admits a unique continuous extension $\tilde f:C\rightarrow \bR$ and this extension is locally Lipschitz.  
\end{lem}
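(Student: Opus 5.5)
The plan is to remove the hypothesis $x-y\in C$ by passing through an intermediate rational point that dominates both $x$ and $y$ in the cone order. Fix a real point $z\in C$. Since $C$ is open, choose $r>0$ with $B_{3r}(z)\subset C$ and $3r\le R(z)$. Since $C$ is open and nonempty, fix a unit vector $v$ such that the closed ball $\overline{B}_{\delta}(v)$ lies in $C$ for some $\delta>0$; scaling, every vector $tv+w$ with $\|w\|\le t\delta$ lies in $C$.

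Now take rational $x,y\in C_\bQ\cap B_{r'}(z)$, where $r'\le r$ is to be determined. Set $t=\|x-y\|/\delta$ and pick a rational point $w$ near $x+tv$; more concretely, choose a rational $w$ with $\|w-(x+tv)\|\le t\delta/2$. Then $w-x = tv + (w-x-tv)$ is of the form $tv+u$ with $\|u\|\le t\delta/2$, so $w-x\in C$. Likewise $w-y = tv + (x-y) + (w-x-tv)$ is of the form $tv+u'$ with $\|u'\|\le t\delta+t\delta/2$. To make this work, I would take $t = 2\|x-y\|/\delta$ instead, so that the error is at most $t\delta$ and $w-y\in C$; the degenerate case $x=y$ is trivial. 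Also $\|w-z\|\le \|x-z\|+t(1+\delta/2)\le r'+ C'\|x-y\|\le r'(1+2C')$ with $C'$ depending only on $\delta$, so choosing $r'$ small ensures $w\in B_{R(z)}(z)\cap C_\bQ$. The hypothesis then gives
\[|f(x)-f(y)|\le |f(w)-f(x)|+|f(w)-f(y)|< A(z)\bigl(\|w-x\|+\|w-y\|\bigr)\le A(z)K\|x-y\|,\]
where $K$ depends only on $\delta$ (since $\|w-x\|,\|w-y\|=O(t)$). This is local Lipschitz continuity of $f$ on $C_\bQ$ near every $z$.

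For the extension, a function that is Lipschitz on $C_\bQ\cap B_{r'}(z)$ is uniformly continuous there. Since $C_\bQ$ is dense in $C$, it extends uniquely to a continuous function on $B_{r'}(z)$, and this extension has the same Lipschitz constant by passing to limits. These local extensions agree on overlaps by density and uniqueness, so they glue to a unique continuous $\tilde f$ on $C$, which is locally Lipschitz. The only real subtlety is the choice of the auxiliary point $w$: it must stay inside the ball where the constants $A(z)$ are valid while still dominating both points. This is handled by taking $r'$ small relative to $R(z)$ and the openness margin $\delta$.
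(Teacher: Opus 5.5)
Your proof is correct and follows essentially the same route as the paper. Both arguments pass through an auxiliary rational point $w$, obtained by pushing $x$ a distance proportional to $\|x-y\|$ in an interior direction of $C$. This gives $w-x,\,w-y\in C$ while keeping $w$ inside $B_{R(z)}(z)$, and yields a Lipschitz constant $A(z)K$ with $K$ depending only on the openness margin.

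There are two cosmetic differences. First, the paper takes both the direction $z_0$ and the scalar $t$ rational, so $w=x+tz_0$ is automatically rational, whereas you perturb $x+tv$ to a nearby rational point. Second, you spell out the density and gluing argument for the continuous extension, which the paper leaves implicit.
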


\begin{proof}
Let $z\in C$. We aim to shrink $R(z)$ so that $f$ is locally Lipschitz in every direction. Fix $z_0\in C_\bQ$ a rational point in the cone. Consider $\epsilon >0$ such that $B_\epsilon(z_0)\subseteq C$. Let $K=1+\frac{4\|z_0\|}{\epsilon}$ and $r(z) := \frac{R(z)}{2K}$. Now, we take arbitrary two rational points $x,y\in B_{r(z)}(z)\cap C_\bQ$. We choose a rational number $t$ such that $\frac{\|x-y\|}{\epsilon} < t < \frac{2\|x-y\|}{\epsilon}.$ Now, set $w=x+tz_0$ a rational point. Here, $w\in B_{R(z)}(z)$ since \[\begin{split}\|w-z\| = \|x+tz_0 -z\| &\leq \|x-z\| + t\|z_0\|\\
&\leq \|x-z\| + \frac{2\|x-y\|}{\epsilon}\|z_0\|\\
&\leq r(z)+\frac{4r(z)}{\epsilon}\|z_0\| \\ 
& = r(z) K = \frac{R(z)}{2}.
\end{split}\] Furthermore, we can observe that both $w-x$ and $w-y$ are in $C_\bQ$. In particular, we see $w-y = t((x-y)/t + z_0)$ and $(x-y)/t + z_0 \in B_{\epsilon}(z_0)\subseteq C$. Hence, by the assumption, 
\[\begin{split}|f(x)-f(y)| & \leq |f(x)-f(w)|+|f(w)-f(y)|\\
& \leq A(z)\|x-w\| + A(z) \|w-y\| \\ 
& \leq A(z) \left( t\|z_0\| + \|x-y\| + t\|z_0\|\right) \\
& \leq A(z) \left(\frac{4\|x-y\|}{\epsilon}\|z_0\| + \|x-y\|\right) \\ 
& = A(z)K\|x-y\|.
\end{split}\] Therefore, the claim follows.
\end{proof}

\begin{Pn}\label{Pn:LocallyLipschitzwithTransformationRule}
Assume the hypotheses of \autoref{lem:LipschitzContinuousExtension}. Furthermore, assume $f(tx)=\frac{1}{t}f(x)$ for any $x\in C_\bQ$ and any positive rational number $t$. Then, its continuous extension also satisfy $\tilde{f}(tx) = \frac{1}{t}\tilde{f}(x)$ for any $x\in C$ and $t>0$. Furthermore, let $\mathcal{C}\subseteq \bR^k$ be a closed cone such that $\mathcal{C}\setminus{\{0\}}\subseteq C$. Then, there exist $R>0$ and $M>0$ such that for every $z\in \mathcal{C}\setminus{\{0\}}$, $B_{R\|z\|}(z) \subseteq C$ and \[|\tilde{f}(x)-\tilde{f}(y)|\leq \frac{M}{\|z\|^2}\|x-y\|\] for any $x,y\in B_{R\|z\|}(z)$.
\end{Pn}

\begin{proof}
The continuous extension admits the transformation since $\tilde{f}$ is continuous. Now, consider a compact subset \[\kappa=\{\xi \in \mathcal{C}\mid 1 \leq \| \xi \| \leq 2\}.\] By \autoref{lem:LipschitzContinuousExtension}, there exists $K>0$ such that for any $\xi \in \kappa$, we can find $r(\xi)$  such that $\tilde{f}$ is Lipschitz continuous on $B_{r(\xi)}(\xi)$ with a Lipschitz constant $KA(\xi)$. Here, since $\cup B_{r(\xi)/2}(\xi)$ covers $\kappa$ and $\kappa$ is compact, we can find a finite subcover with centers $\xi_1,\dots ,\xi_N$. Let $R= \frac{1}{2}\min r(\xi_i)$ and $M = K\max A(\xi_i)$. Now, take any $z\in \mathcal{C}\setminus\{0\}.$ Let $w = z/\|z\|$ be the normalization of $z$. Since $w\in \kappa$, $w$ is in $B_{r(\xi_i)/2}(\xi_i)$ for some $i$. Therefore, we have $B_R(w)\subseteq B_{r(\xi_i)}(\xi_i)$. With this, consider arbitrary two elements $x,y\in B_{R\|z\|}(z)$. Then, \[\begin{split}|\tilde{f}(x)-\tilde{f}(y)|&=\frac{1}{\|z\|}\left|\tilde{f}\left(\frac{x}{\|z\|}\right)-\tilde{f}\left(\frac{y}{\|z\|}\right)\right|\\
& \leq \frac{1}{\|z\|}M\left\|\frac{x}{\|z\|} - \frac{y}{\|z\|}\right\| \\
& = \frac{M}{\|z\|^2}\|x-y\|.
\end{split}
\] Therefore, the theorem follows.
\end{proof}

\begin{thm}\label{Thm:LipschitzAwayFromZero}
With the hypothesis as \autoref{Pn:LocallyLipschitzwithTransformationRule}, there is a constant $\tilde{M}>0$ such that for any $x,y\in \mathcal{C}\setminus\{0\}$, \[|\tilde{f}(x)-\tilde{f}(y)| \leq \frac{\tilde{M}}{\min\{\|x\|,\|y\|\}^2}\|x-y\|.\] As a consequence $\tilde{f}$ is Lipschitz continuous on $\{x\in \mathcal{C}\mid \|x\|\geq \epsilon \}$ for any $\epsilon>0$.  
\end{thm}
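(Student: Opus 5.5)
The plan is to reduce to the local statement of \autoref{Pn:LocallyLipschitzwithTransformationRule} by a case split on how far apart $x$ and $y$ are relative to their norms. Let $R>0$ and $M>0$ be the constants from \autoref{Pn:LocallyLipschitzwithTransformationRule} for the closed cone $\mathcal{C}$. Take $x,y\in\mathcal{C}\setminus\{0\}$ and assume without loss of generality that $\|x\|\leq\|y\|$, so $\min\{\|x\|,\|y\|\}=\|x\|$.

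\textbf{Case 1: $\|x-y\|<R\|x\|$.} Then $y\in B_{R\|x\|}(x)$, and trivially $x\in B_{R\|x\|}(x)$. Applying \autoref{Pn:LocallyLipschitzwithTransformationRule} with $z=x$ gives directly
\[|\tilde f(x)-\tilde f(y)|\leq \frac{M}{\|x\|^2}\|x-y\|.\]

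\textbf{Case 2: $\|x-y\|\geq R\|x\|$.} Here I would use a crude sup bound instead of Lipschitz estimates. First, $\tilde f$ is continuous on the compact set $\{\xi\in\mathcal{C}\mid\|\xi\|=1\}$, which lies inside $C$, so $|\tilde f|$ is bounded there by some $B$. The scaling identity $\tilde f(tx)=\tilde f(x)/t$ then gives $|\tilde f(\xi)|\leq B/\|\xi\|$ for all $\xi\in\mathcal{C}\setminus\{0\}$. Consequently
\[|\tilde f(x)-\tilde f(y)|\leq \frac{B}{\|x\|}+\frac{B}{\|y\|}\leq \frac{2B}{\|x\|}=\frac{2B\|x\|}{\|x\|^2}\leq \frac{2B}{R}\cdot\frac{\|x-y\|}{\|x\|^2}.\]

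Setting $\tilde M=\max\{M,2B/R\}$ proves the main inequality. For the consequence, on $\{x\in\mathcal{C}\mid\|x\|\geq\epsilon\}$ the bound becomes $(\tilde M/\epsilon^2)\|x-y\|$, which is a uniform Lipschitz constant.

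The only point that needs care is Case 2, specifically the boundedness of $\tilde f$ on the unit sphere of $\mathcal{C}$. This follows from compactness, the hypothesis $\mathcal{C}\setminus\{0\}\subseteq C$, and the continuity of the extension $\tilde f$ guaranteed by \autoref{lem:LipschitzContinuousExtension}. Once that bound is in hand, everything else is bookkeeping.
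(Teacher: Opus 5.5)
Your proof is correct and matches the paper's argument essentially line for line. The paper makes the same case split at $\|x-y\|<R\|x\|$, applies \autoref{Pn:LocallyLipschitzwithTransformationRule} with $z=x$ in the near case, and in the far case uses the sup bound $B$ on $\{\xi\in\mathcal{C}\mid\|\xi\|=1\}$ together with the scaling rule, taking $\tilde M=\max\{M,2B/R\}$.
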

\begin{proof}
Let $L=\{\xi \in \mathcal{C} \mid \|\xi \| =1\}$ be a compact subset of $C$. Let $B = \max_{u\in L}|\tilde{f}(u)|$. Here, due to the transformation rule, we may observe $|\tilde{f}(x)| = \frac{|\tilde{f}(x/\|x\|)|}{\|x\|}\leq B/\|x\|$ for any $x\in \mathcal{C}\setminus\{0\}.$ Now, to prove the theorem, consider two arbitrary elements $x,y\in \mathcal{C}\setminus\{0\}$. Without loss of generality, assume $\|x\|\leq \|y\|$. Now, suppose that $\|x-y\| <R\|x\|$. Then, by \autoref{Pn:LocallyLipschitzwithTransformationRule}, if we set $z = x$, we have \[ |\tilde{f}(x)-\tilde{f}(y)|\leq \frac{M}{\|x\|^2}\|x-y\|=\frac{M}{\min\{\|x\|,\|y\|\}^2}\|x-y\|\] We note here that this $M$ is independent of the choice of $x$ and $y$. On the other hand, if $\|x-y\|\geq R\|x\|$, we have \[|\tilde{f}(x)-\tilde{f}(y)|\leq \frac{B}{\|x\|}+\frac{B}{\|y\|} \leq \frac{2B}{\min\{\|x\|,\|y\|\}}\leq \frac{2B}{R\min\{\|x\|,\|y\|\}^2}\|x-y\|.\] Here, $B$ and $R$ are also independent to $x$ and $y$. Hence taking $\tilde{M}=\max\{M,2B/R\}$ proves the theorem.
\end{proof}

We recall the following estimate on the difference of the $F$-signature under small, effective perturbations:

\begin{lem}\label{lem.BoundsForFsigBigDiv}(cf. \cite{LeePandeFsigAmpCone})
Let $X$ be a $d$-dimensional normal variety over $k$. Let $L$ be a big Cartier divisor and $H$ be an effective Cartier divisor on $X$. Let $n$ and $b:=b(n)$ (depending only on $n$) be positive integers such that $n>2\|H\|/\|L\|$ and that $nL-bH$ is big. Then, there exists a constant $C$ depending only on $X$ such that \[\left|\s_X(L) - \s_X\left(L+\frac{1}{n}H\right)\right|\leq \frac{C}{\|L\|^{d+1}}\left(2\vol(L)\frac{(b+1)^d - b^d}{b^d} + \vol\left(L+\frac{1}{n}H\right)-\vol(L)\right) + 2\frac{\s_X(L)}{b+1}.\] 
\end{lem}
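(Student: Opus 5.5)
The plan is to compare the two sums term by term in $m$ and to work at the level of integral divisors. Using the transformation rule (\autoref{Pn:TransformationRuleFsig}), we have $\s_X(L+\frac1n H)=n\,\s_X(nL+H)$ and $\s_X(L)=n\,\s_X(nL)$. So it suffices to compare $\dim_k H^0(m(nL+H))/I_e(m(nL+H))$ with $\dim_k H^0(mnL)/I_e(mnL)$. Since $H$ is effective, $H^0(mnL)\subseteq H^0(m(nL+H))$, and by \autoref{lem:Ieeffective} we have $I_e(mnL)\subseteq I_e(m(nL+H))$. As in the proof of \autoref{Pn:twistedFsig}, we then split
\[\dim\frac{H^0(m(nL+H))}{I_e(m(nL+H))}-\dim\frac{H^0(mnL)}{I_e(mnL)} = \dim\frac{H^0(m(nL+H))}{H^0(mnL)+I_e(m(nL+H))} - \dim\frac{J_e(mnL)}{I_e(mnL)},\]
where $J_e(mnL)=H^0(mnL)\cap I_e(m(nL+H))$. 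We bound the absolute value of the difference by the sum of the two pieces.

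The first piece is at most $\dim H^0(m(nL+H))-\dim H^0(mnL)$. By \autoref{Thm:DegenSpaceEqualsWholeSpace}(b), applicable since $n>2\|H\|/\|L\|$, only $m\le Cp^e/(n\|L\|)$ contribute. We sum these terms using \autoref{lem:sumvol}, as in \autoref{Prop:signaturevolumecomparison}, and take $e\to\infty$. After rescaling by $n$, this piece is bounded by $\frac{C}{\|L\|^{d+1}}(\vol(L+\frac1nH)-\vol(L))$.

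The second piece, $\dim J_e/I_e$, is the main obstacle, and this is where $b$ enters. Since $nL-bH$ is big, we may assume after taking multiples that it is effective, say $nL-bH\sim G\ge0$. We compare the degree-$mn$ splitting with a degree-$m(n+\frac nb)$ one. The inclusions $\cO(mnL)\subseteq\cO(m(nL+H))\subseteq \cO(mnL+\frac{m}{b}(nL))$ (up to rounding), obtained from $H\le \frac1b nL$ modulo the effective divisor $G$, together with \autoref{lem:Ieeffective}, give
\[J_e(mnL)\subseteq H^0(mnL)\cap I_e\bigl(mn(1+\tfrac1b)L\bigr).\]
The codimension of the latter in $H^0(mnL)$ is at least the number of non-split sections at degree $mn(1+\frac1b)$, minus $\dim H^0(mn(1+\frac1b)L)-\dim H^0(mnL)$. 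Summing over $m$ and using the scaling of $\s_X$ for the multiple $(1+\frac1b)L$, we get $\sum_m \dim J_e/I_e$ bounded by two contributions. The first is the defect $\s_X(L)-\frac{b}{b+1}\s_X(L)$, together with the symmetric term, and yields $2\s_X(L)/(b+1)$. The second is a volume difference $\vol((1+\frac1b)L)-\vol(L)=\vol(L)\frac{(b+1)^d-b^d}{b^d}$, counted twice, which again receives the factor $C/\|L\|^{d+1}$ via the cutoff from \autoref{Thm:DegenSpaceEqualsWholeSpace}.

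The delicate points are the rounding, since $m/b$ need not be an integer, and matching the index ranges. To handle the rounding, I would first pass to $m$ divisible by $b$. Then I would use \autoref{Pn:twistedFsig} to absorb bounded twists with an error of order $O(p^{e(d-1)})$ per degree, which vanishes after normalizing by $p^{e(d+1)}$.
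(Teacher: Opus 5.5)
Your proposal is correct and is essentially the paper's argument. The paper invokes \autoref{MainthmA} for the existence of the limits and defers to the dimension count in the proof of \cite[Lemma 4.14]{LeePandeFsigAmpCone}; that count is this same comparison of $I_e$-codimensions along the effective sandwich $nL\le nL+H\le (1+\frac1b)nL$ (with $nL-bH$ effective after passing to a multiple), using \autoref{lem:Ieeffective}, the cutoff from \autoref{Thm:DegenSpaceEqualsWholeSpace}(b), \autoref{lem:sumvol}, and the scaling rule \autoref{Pn:TransformationRuleFsig}. Your bookkeeping in fact gives the estimate with both factors of $2$ replaced by $1$, so the vague ``symmetric term'' and ``counted twice'' are unnecessary; the $2$'s in the statement are just slack.
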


\begin{proof}
By \autoref{MainthmA}, we know the $F$-signature exists. Thus, the inequality follows from the dimension computations of the non-splitting subspaces of the global sections from the proof of \cite[Lemma 4.14]{LeePandeFsigAmpCone}. We omit the computations here.
\end{proof}

\begin{proof}[Proof of \autoref{thm:FsigLocallyLipschitz}]
By \autoref{Pn:TransformationRuleFsig}, the $F$-signature function has the transformation rule $\s_X(L^n) = \frac{1}{n}\s_X(L)$ for any $n\in \mathbb{N}$ and a big divisor $L$. The transformation rule naturally extends to positive rational numbers. To show the $F$-signature function is locally Lipschitz continuous on the big cone, it remains to show that the difference is bounded when we move in an effective direction. 

Let $D\in \mathrm{Big}(X)$ be a real big divisor class. Since the big cone is open, there exists a constant $r>0$ such that $\overline{B_{2r}(D)}\subseteq \mathrm{Big}(X)$ and $r<\frac{\|D\|}{2}$. Consider big $\bQ$-divisors $L$ and $L'$ in $B_{r/4}(D)$ such that $L'-L$ is big. Then, we may observe \[ \|L'-L\| \leq \|L'-D\|+\|L-D\| \leq \frac{r}{2}.\]
Furthermore, we also have \begin{equation}\label{eqn:lowerboundforL} \|L\| \geq \|D\|-\|L-D\|\geq 2r-\frac{r}{4}=\frac{7}{4}r.\end{equation}
Hence, $L$ and $L'$ are big $\bQ$-divisors such that $\|L'-L\| < \frac{\|L\|}{2}$ and that $L'-L$ is big. Now, let us write $L'-L = \frac{1}{n}H$ for some large $n\in \bN$ where $H$ is an effective divisor. Let $b(n) := \lfloor \frac{nr}{\|H\|}\rfloor$ so that $b(n)\to \infty$ as $n\to \infty$. Furthermore, by construction, $2\frac{\|H\|}{\|L\|} = n\frac{2\|L'-L\|}{\|L\|} < n$ and $nL-bH$ is big. Indeed, we observe $L-\frac{b}{n}H\in B_{2r}(D)$ since \[\left\|D-\left(L-\frac{b}{n}H\right) \right\|\leq \|D-L\| + \frac{b}{n}\|H\|\leq \frac{r}{4}+r < 2r.\] 
Now, by \autoref{lem.BoundsForFsigBigDiv}, 
\[\begin{split}\left|\s_X(L) - \s_X\left(L+\frac{1}{n}H\right)\right|&\leq \frac{C}{\|L\|^{d+1}}\left(2\vol(L)\frac{(b+1)^d - b^d}{b^d} + \vol\left(L+\frac{1}{n}H\right)-\vol(L)\right) + 2\frac{\s_X(L)}{b+1}\\ 
 &\leq \frac{C}{\|L\|^{d+1}}\left(2\vol(L)\frac{2^d}{b} + \vol\left(L+\frac{1}{n}H\right)-\vol(L)\right) + \frac{2}{b}\s_X(L)\\
\end{split}
\] Now, note that for any big $\bQ$-divisors $L_1$ and $L_2$, \[|\vol(L_1) - \vol(L_2)|\leq C_0 \max (\|L_1\|, \|L_2\|)^{d-1} \|L_1 - L_2\|\] for some constant $C_0$ depending only on $X$ and the norm $\| \|$. Here, recall that we picked $L,L'\in B_{r/4}(D)$ where $r<\frac{\|D\|}{4}$. Hence, both $\|L\|$ and $\|L'\|$ are less than $2\|D\|$. Further, $b(n) := \lfloor \frac{nr}{\|H\|}\rfloor\geq \frac{nr}{2\|H\|}$. If we substitute $H = n(L'-L)$ and all information we had above, the inequality becomes 
\[\begin{split}|\s_X(L)-\s_X(L')| & \leq \frac{C}{\|L\|^{d+1}}\left(\frac{2^{d+2}}{r}\vol(L)\|L-L'\| + 2^{d-1}C_0 \|D\|^{d-1} \|L-L'\|\right) + \frac{4}{r}\s_X(L)\|L-L'\|\\
& \leq \left(\frac{C}{\|L\|^{d+1}}\left(\frac{2^{d+2}}{r}\vol(L)+2^{d-1}\|D\|^{d-1}C_0\right) + \frac{4}{r}\s_X(L)\right) \|L-L'\|.
\end{split}\] 
Now, \autoref{Prop:signaturevolumecomparison}, 
$\s_X(L)\leq C_1\frac{\vol(L)}{\|L\|^{d+1}}$ for some uniform constant $C_1$. Furthermore, the volume function is continuous on the big cone, so we let $M=\max_{L\in \overline{B_{2r}(D)}}\vol(L)$. By \autoref{eqn:lowerboundforL}, we have $\|L\|\geq \frac{7}{4}r$. Hence, \[|\s_X(L)-\s_X(L')| \leq \left(C\left(\frac{4}{7r}\right)^{d+1}\left(\frac{2^{d+2}M}{r}+2^{d-1}\|D\|^{d-1}C_0\right)+7C_1M\left(\frac{4}{7r}\right)^{d+2}\right)\|L-L'\|.\]
Here, the Lipschitz constant on the above inequality depends only on $\|D\|$. Indeed $C$ and $C_0$ are independent to the divisors we chose. $r$ and $M$ are constants depend only $D$. 

Therefore, by \autoref{lem:LipschitzContinuousExtension}, the $F$-signature function is locally Lipschitz continuous on the big cone and admits the continuous extension to the big cone. By \autoref{Thm:LipschitzAwayFromZero}, there exists $M$ such that \[|\s_X(L)-\s_X(L')|\leq \frac{M}{\min\{\|L\|,\|L'\|\}^2}\|L-L'\|.\]
As a consequence, we conclude that the $F$-signature function is Lipschitz continuous on $\{L\in \mathcal{C} \mid \|x\| \geq \epsilon\}$ where $\epsilon >0$ is a positive number and  $\mathcal{C}$ is a closed cone such that $\mathcal{C}\setminus\{0\}\subset \mathrm{Big}(X)$.
\end{proof}

\section{The $\FA$-invariant on the big cone.}\label{Section:AlphaInvariant}
Throughout this section, let $X$ be a normal, projective variety over $k$ with $\mathrm{dim}(X)>0$.

\subsection{The $\FA$-invariant.} \label{section:alphainvarinat}

\begin{dfn} \label{alphae}
 Let $L$ be a big Cartier divisor on $X$. For each integer $e \geq 1$, we define
\[ m_{e} (X, L) : = \max \left(\{ m \geq 0 \, | \, I_{e} (mL)  = 0 \}\cup\{0\}\right), \]
where $I_e$ is the subspace defined in \autoref{Iedfn}. We also define
\[ \alpha_{e}(X,L) := \frac{m_e (X,L)}{p^e} .\]
\end{dfn}
Note that since $|mL| \neq \emptyset$ for all $m \gg 0$, we must have $m_e (X,L) < \infty$ for all $e$, see \autoref{Thm:DegenSpaceEqualsWholeSpace}.
\begin{lem}\label{lem:AlphaInvExists}
     The limit $\lim_{e \to \infty} \alpha_e (X,L)$ exists.
\end{lem}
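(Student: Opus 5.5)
We plan to show that the sequence $\alpha_e(X,L)$ is essentially monotone and bounded, so that it converges. Boundedness is immediate: by \autoref{Thm:DegenSpaceEqualsWholeSpace} there is a constant $C$ with $I_e(mL)=H^0(mL)\neq 0$ for all $m>Cp^e/\|L\|$ with $|mL|\neq\emptyset$. Since $L$ is big, $H^0(mL)\neq 0$ for all $m\gg 0$, so $m_e(X,L)\le \max(Cp^e/\|L\|, m_0)$ for a fixed $m_0$. Hence $0\le \alpha_e(X,L)\le C/\|L\| + m_0/p^e$ is bounded.

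The key step is the comparison
\[ p\, m_e(X,L) \le m_{e+1}(X,L) \quad\text{(up to a bounded additive error).}\]
Suppose $I_e(mL)=0$, so every nonzero $s\in H^0(mL)$ admits $\varphi\in \Hom(F^e_*\cO_X(mL),\cO_X)$ with $\varphi(F^e_*s)\neq 0$; we may arrange $\varphi(F^e_*s)=1$ after composing with an appropriate map, using that $H^0(X,\cO_X)=k$ and precomposing with premultiplication. We want $I_{e+1}(pmL)=0$. Given nonzero $t\in H^0(pmL)$, first use a splitting-type map $\psi\in\Hom(F_*\cO_X(pmL),\cO_X(mL))$ with $\psi(F_*t)\neq 0$. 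Such a $\psi$ comes from $\Hom(F_*\cO_X(p\cdot mL),\cO_X(mL))\cong \Hom(F_*\cO_X,\cO_X)\otimes\cO_X(mL)$ via the projection formula when $mL$ is Cartier, since $\cO_X(pmL)=F^*\cO_X(mL)$. If $X$ is $F$-split, or more generally whenever the trace map $F_*\cO_X\to\cO_X$ (or some map) is nonzero on $t$ as a section, then $\psi(F_*t)$ is a nonzero section of $mL$. Composing $F^e_*\psi$ with $\varphi$ would then produce a map in $\Hom(F^{e+1}_*\cO_X(pmL),\cO_X)$ that is nonzero on $t$, giving $t\notin I_{e+1}(pmL)$.

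The main obstacle is producing $\psi$ with $\psi(F_*t)\neq 0$ for every nonzero $t$. This does hold: $\Hom(F_*\cO_X,\cO_X)$ generates $\mathscr{H}om(F_*\cO_X,\cO_X)$ generically only after twisting, but locally the evaluation pairing is perfect at the generic point. So for each $t$ some local $\psi$ is nonzero on $t$. To globalize, we twist by a fixed effective Cartier divisor $D$ (as in \autoref{lem:injectivemap}), at the cost of replacing $pmL$ by $pmL - D$. Absorbing $D$ via bigness requires writing $D\le rL$ for a fixed $r$, and then applying \autoref{lem:Ieeffective}. This yields $m_{e+1}(X,L)\ge p\,m_e(X,L) - r$.

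Dividing by $p^{e+1}$ gives $\alpha_{e+1}\ge \alpha_e - r/p^{e+1}$. Therefore $\alpha_e - \sum_{i\le e} r/p^i$ is nondecreasing and bounded, and hence converges. Since the correction series converges, $\lim_e \alpha_e(X,L)$ exists; in fact it equals $\limsup_e \alpha_e$, mirroring the structure of \autoref{Lem.LimitExists}.
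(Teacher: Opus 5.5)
Your argument is correct in substance, but it proves the opposite inequality from the paper. The paper shows $m_{e+1}+1\le p(m_e+1)$. A nonzero $f\in I_e((m_e+1)L)$ has $f^p$ a nonzero element of $I_{e+1}(p(m_e+1)L)$: any $\psi\colon F^{e+1}_*\cO_X(p(m_e+1)L)\to\cO_X$ restricts, along the $\cO_X$-linear map $s\mapsto s^p$, to a map on $F^e_*\cO_X((m_e+1)L)$, and that map kills $f$. Hence $\alpha_e+p^{-e}$ is nonincreasing and bounded below by $0$, and $\FA(X,L)=\inf_e(\alpha_e+p^{-e})$. That route needs no twisting divisor and no uniform upper bound on $\alpha_e$.

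You instead prove $m_{e+1}\ge pm_e-r$, which costs more:
\begin{itemize}
\item generically spanning global maps $F_*\cO_X(-D')\to\cO_X$;
\item bigness, to absorb $D'$ into $rL$;
\item \autoref{Thm:DegenSpaceEqualsWholeSpace}, to bound $\alpha_e$ from above.
\end{itemize}
In return you get a finite-level lower bound $\FA(X,L)\ge\alpha_e-\frac{r}{p^e(p-1)}$. This complements the paper's $\FA(X,L)\le\alpha_e+p^{-e}$, and together they give $|\alpha_e-\FA(X,L)|=O(p^{-e})$. Your inequality also comes straight from the definition of $m_{e+1}$ as a maximum. By contrast, the paper's step from $I_{e+1}(p(m_e+1)L)\neq 0$ to $m_{e+1}<p(m_e+1)$ tacitly uses that nonvanishing of $I_{e+1}(mL)$ persists for all larger $m$. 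For a non-effective big $L$ this is only clear after a bounded shift, which is harmless for the limit.

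Two small corrections.
\begin{itemize}
\item If you take $D$ from \autoref{lem:injectivemap}, the maps $F_*\cO_X\to\cO_X(D)$ twisted by $\cO_X(mL-D)$ are maps $F_*\cO_X(pmL-pD)\to\cO_X(mL)$. So the divisor you must absorb is $pD$, not $D$. Alternatively, choose $D'$ ample enough that $\mathscr{H}om(F_*\cO_X(-D'),\cO_X)$ is generically globally generated. Either way this only changes $r$.
\item From $\alpha_{e+1}\ge\alpha_e-r/p^{e+1}$, the nondecreasing sequence is $\alpha_e+\sum_{i\le e}r/p^i$, not $\alpha_e-\sum_{i\le e}r/p^i$. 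With that sign fixed, boundedness gives convergence as you say.
\end{itemize}
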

\begin{proof}
We first claim that for all $e \geq 1$, we have
     \begin{equation}\label{monotonicity} \alpha_{e}  + \frac{1}{p^e} \geq \alpha_{e+1}  + \frac{1}{p^{e+1}}. \end{equation}
     Let $0  \neq f $ be an element of $I_e (m_e + 1)$ (which exists by definition). Then, we have that $f^{p} $ is a non-zero element of $I_{e+1} (p(m_e + 1))$ (see \cite[Lemma 3.11]{PandeFrobeniusVersionTiansAlphaInvariant}). In other words, $I_{e+1}(p(m_e+1))\neq 0$. Hence, it implies
    \[ p(m_e +1) \geq m_{e+1} + 1. \]
    Dividing both sides by $p^{e+1}$, we obtain the inequality \autoref{monotonicity}.
   Therefore, the sequence $\{ \alpha_e + \frac{1}{p^e} \}_{e \geq 1}$ is decreasing sequence of non-negative real numbers. Hence, the sequence converges to its infimum. Moreover, since the sequence $\frac{1}{p^{e}}$ converges to zero, the sequence $\{ \alpha_e \}_{e \geq 1}$ also converges and
   \[ \lim _{e \to \infty} \alpha_{e} = \inf _{e \geq 1} \{ \alpha_e + \frac{1}{p^e} \}. \]
\end{proof}

\begin{dfn} \label{Falphadfn}
    We define the $\FA$-invariant of $(X,L)$ as
    \[ \FA(X,L) := \lim_{e \to \infty} \alpha_e (X,L).\]
\end{dfn}

\begin{rem}
    When $L$ is an ample divisor, this definition of $\FA$-invariant can be interpreted in terms of $F$-pure thresholds and is an analog of Tian's $\alpha$-invariant in positive characteristics \cite{PandeFrobeniusVersionTiansAlphaInvariant}.
\end{rem}

\begin{Pn}\cite{PandeFrobeniusVersionTiansAlphaInvariant} \label{Prop:transformationforalpha}
Let $X$ be a normal projective variety, and $L$ be a big Cartier divisor. Then, for any $n\geq 1$, \[\alpha_F(X,nL) = \frac{1}{n}\alpha_F(X,L)\]
\end{Pn}

\begin{proof}
Fix $M\gg0$ such that $| mL| \not= 0$ for all $m\geq M$. We claim $m_e(L) \leq nm_e(nL)\leq m_e(L) + M$. For the first inequality, it suffices to show $ \lfloor m_e(L)/n \rfloor \leq m_e(nL)$ since $m_e$'s are integers. This follows from the definition since $n\lfloor m_e(L) / n\rfloor \leq m_e(L)$, which implies $I_e(n\lfloor m_e(L)/n\rfloor L) = 0$. Now, we prove $nm_e(nL) \leq m_e(L)+ M$. We may assume $nm_e(nL) - M >0$. If not, we have $nm_e(nL) - M \leq m_e(L)$ that already gives the required inequality.

Set $k = nm_e(L) - M$. Since $M\leq nm_e(nL) - k$, we have $|(nm_e(nL) - k)L|\not=\emptyset$. Hence, we can take an effective divisor $E\in |(nm_e(nL) - k)L|$ such that $kL + E\sim nm_e(nL)L$. By construction, we have $I_e(kL+E) = 0$. Now, since $I_e(kL) \subseteq I_e(kL+E)$, we have $I_e(kL) = 0$, or equivalently, $k\leq m_e(L)$. Therefore, $nm_e(L) - M \leq m_e(L)$. Now, dividing the inequality by $1/p^e$ and take the limit $e\to \infty$, we have $\alpha_F(X, nL) = \frac{1}{n}\alpha_F(X,L)$.
\end{proof}

With the transformation rule above, the Frobenius alpha invariant extends to big $\bQ$-divisors.
\begin{dfn} \label{dfn:alphaofQQdiv}
    Let $X$ be a normal projective variety over $k$ and $L$ be a big, $\QQ$-Cartier, $\QQ$-divisor. Then, we define $\FA(X,L) $ by the formula
    \[ \FA(X,L) = n \, \FA(X,nL)  \]
    where $n \geq 1$ is any integer such that $nL$ is a Cartier divisor. Thanks to \autoref{Prop:transformationforalpha}, the definition of $\FA(X,L)$ is independent of the choice of $n$.
\end{dfn}

In the rest of this section, we will establish the key properties of the $\FA$-invariant on the big cone.

\subsection{Monotonicity and Positivity}

\begin{Pn} \label{Prop:addingeffective}
    Let $X$ be a normal projective variety and $L$ be a big $\mathbb{Q}$-Cartier divisor on $X$. Let $E$ be any effective $\mathbb{Q}$-Cartier divisor on $X$. Then we have
 \[ \FA(X,L) \geq \FA(X, L+E) .\]
\end{Pn}
\begin{proof}
    Let $n$ and $m$ be the Cartier index of $L$ and $E$ respectively. Let $L'= mnL$ and $E'=mnE$. Using \autoref{lem:Ieeffective}, we see that for any $e \geq 1$, we have $m_e (X,L') \geq m_e (X, L'+E')$. Now, the proposition follows from \autoref{Prop:transformationforalpha} and the definition of $\FA$-invariant.
\end{proof}

\begin{Pn} \label{Prop:Fsigandalphacomparison}
    Let $X$ be a normal, projective variety and $L$ be a big Cartier divisor. Assume that $d = \dim(X)$ is positive. Fix a constant $C>0$ such that for all $e \geq 1$ and all $m \geq C p^e$, we have $I_e (mL) = H^0 (mL)$. Then, we have the inequality
    \[ \frac{\vol(L) \, \FA(X, L)^{d+1}}{(d+1)!} \leq  \s_X(L) \leq \frac{\vol(L)}{(d+1)!} \, \big( C^{d+1} - (C - \FA(X,L) )^{d+1} \big) . \]
\end{Pn}
\begin{proof}
    The first inequality follows immediately from the definition of the $F$-signature of $L$, once we use the fact that for any $e \geq 1$, $I_e (m) = 0$ for $m \leq m_{e}(X,L)$ (see \autoref{alphae}). For the second inequality, we use \autoref{postivitylemma} proved below along with the fact that for any real number $\lambda > \FA(X,L)$, there exists an $e \gg 1 $ and a section $f \in I_e (nL)$ such that $\lambda > \frac{n}{p^e -1 }$. The inequality then follows by taking a limit $\lambda \to \FA(X,L)$.
\end{proof}

\begin{lem} \cite[Lemma 4.7] {PandeFrobeniusVersionTiansAlphaInvariant} \label{postivitylemma}
Fix a constant $C>0$ such that for all $e \geq 1$ and all $m \geq C p^e$, we have $I_e (mL) = 0$. Suppose we have integers $e_0,n \geq 1$ and a non-zero section $f \in I_{e_0} (nL) \subset H^0 (X, \cO_X (nL))$ and set $\lambda \geq  \frac{n}{p^{e_0} - 1} $ be any real number. Then,
  \begin{equation}
        \s_X(L) \leq  \frac{\vol(L)}{(d+1)!} \, \big( C^{d+1} - (C - \lambda) ^{d+1} \big) .
    \end{equation}
\end{lem}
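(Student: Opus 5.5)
The plan is to bound the sum $\sum_m \dim_k H^0(mL)/I_e(mL)$ from above by throwing away, for each $m$, the part of $H^0(mL)$ that is forced into $I_e(mL)$ by multiplying with a power of $f$. (The hypothesis should presumably read $I_e(mL)=H^0(mL)$ for $m\geq Cp^e$, as in \autoref{Prop:Fsigandalphacomparison}; I use it in that form.)

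\emph{Step 1: multiplication by a power of $f$ lands in $I_e$.} Fix $e\geq e_0$ and write $e=e_0 q+r$ with $0\le r<e_0$. Starting from $f\in I_{e_0}(nL)$, iterate \cite[Lemma 3.11]{PandeFrobeniusVersionTiansAlphaInvariant} (as in the proof of \autoref{lem:AlphaInvExists}). This gives a nonzero section
\[ f_e:=f^{1+p^{e_0}+\dots+p^{e_0(q-1)}}\cdot(\text{suitable }p^{e_0 q}\text{-power adjustments}) \in I_e(n_eL), \qquad \frac{n_e}{p^e}\to \frac{n}{p^{e_0}-1}\le\lambda. \]
Concretely, one can take $n_e\le \lambda p^e+O(1)$. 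Indeed, $f^{p^{e_0}}\cdot f\in I_{2e_0}$ follows from the composition of splittings, so $f^{(p^{e_0 q}-1)/(p^{e_0}-1)}\in I_{e_0q}$. One then raises to the $p^r$ power using the same lemma. The key additional fact needed is that $I_e$ is an ideal-like subspace: if $g\in I_e(aL)$ and $h\in H^0(bL)$, then $gh\in I_e((a+b)L)$. This holds because any $\varphi$ on $F^e_*\cO_X((a+b)L)$ precomposed with multiplication by $h$ is a map on $F^e_*\cO_X(aL)$, which kills $g$.

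\emph{Step 2: the dimension count.} For $m\ge n_e$, multiplication by $f_e$ gives an injection $H^0((m-n_e)L)\hookrightarrow I_e(mL)$, since $X$ is integral. Hence
\[ \dim_k\frac{H^0(mL)}{I_e(mL)}\le \dim_k H^0(mL)-\dim_k H^0((m-n_e)L). \]
Summing over $0\le m\le Cp^e$ (beyond which the terms vanish by hypothesis), the sum telescopes to
\[ \sum_{m\le Cp^e}h^0(mL)-\sum_{m\le Cp^e-n_e}h^0(mL). \]

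\emph{Step 3: the limit.} Divide by $p^{e(d+1)}$ and apply \autoref{lem:sumvol} with $N=Cp^e$ and $N=Cp^e-n_e\approx (C-\lambda')p^e$, where $\lambda'=\lim n_e/p^e\le\lambda$. This yields
\[ \s_X(L)\le \frac{\vol(L)}{(d+1)!}\bigl(C^{d+1}-(C-\lambda')^{d+1}\bigr)\le \frac{\vol(L)}{(d+1)!}\bigl(C^{d+1}-(C-\lambda)^{d+1}\bigr), \]
using monotonicity of $t\mapsto C^{d+1}-(C-t)^{d+1}$ for $t\le C$. If $\lambda>C$, the bound is trivially compared with \autoref{Prop:signaturevolumecomparison}; alternatively, one may cap $\lambda$ at $C$.

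\emph{Main obstacle.} The main obstacle is Step 1. It requires producing, for every large $e$ (not only multiples of $e_0$), a section of $I_e$ in degree at most $\lambda p^e+O(1)$, with the error uniform enough to vanish after dividing by $p^e$. The hard part is handling the remainder $r$ and the $+1$ shifts in the Frobenius-power lemma.
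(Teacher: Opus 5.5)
Your proposal is correct and is essentially the paper's argument: the paper likewise shows $f^{(p^{qe_0}-1)/(p^{e_0}-1)}\in I_{qe_0}$ (via a factorization of splittings), uses the injection of $H^0((m-n_e)L)$ into $I_e(mL)$ given by multiplication by that power, and sums using \autoref{lem:sumvol}; you are also right that the hypothesis should read $I_e(mL)=H^0(mL)$. The one difference is that the paper avoids your ``main obstacle'' entirely by computing $\s_X(L)$ along the subsequence $e\in e_0\NN$, which is legitimate because the limit exists by \autoref{MainthmA}; your extra $p^r$-power step via \cite[Lemma 3.11]{PandeFrobeniusVersionTiansAlphaInvariant} also works and even gives $n_e<\lambda p^e$ exactly, with no error term.
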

\begin{proof}
Since $f $ belongs to $I_{e_{0}} (nL)$, for each $ m \geq n$ we know that $f \cdot H^ 0 (X, \cO_X ((m -n) L)) \subset H^0 (X, \cO_X (mL))$ is contained in $ I_{e_{0}}(mL)$ yielding the inequality
\begin{equation}
  \dim_{k} \frac{H^ 0 (X, \cO_X (mL))}{I_{e_{0}}(mL)} \leq  h^ 0 (X, \cO_X (mL)) - h^ 0 (X, \cO_X ((m- n)L))  .  
\end{equation}
for all $m \geq n$.
Further, setting $v_{r} = \frac{p^{re_{0}} }{p^{e_0} - 1} $ for any positive integer $r$, we claim that $f^{v_r} $ belongs to  $I_{re_{0}}(n v_{r} L )$. To see this, let $D$ denote the effective divisor corresponding to $f$ and note that we have a natural map of $\cO_X$-modules $\cO_X \to F_* ^{e_0} (\cO_X (D)) $. Twisting this by $D$ and applying $F_* ^{e_0}$, we obtain a composition:
\[ \cO_X \to F_* ^{e_0} \cO_X (D) \to F_* ^{2e_0} \cO_X(( 1+ p^{e_0}) D) \]
where the composite map is the same as the composition $\cO_X \to F_* ^{2e_0} \to F_* ^{2e_0} \cO_X ((p^{e_0} + 1) D)$. Proceeding inductively, we obtain a factorization
\[ \cO_X \to F_* ^{e_0} \cO_X ( D) \to F_* ^{re_0} \cO_X (v_r D). \]
Therefore, any splitting of the map $\cO_X \to F_* ^{re_0} \cO_X (v_r D)$ as provides a splitting of $ \cO_X \to F_* ^{e_0} \cO_X ( D) $. This shows that $f^{v_r} \in I_{re_0} (nv_r  L)$.

Therefore, we have
\begin{equation} \label{upperboundinequality}
  \dim_{k} \frac{H^ 0 (X, \cO_X (mL))}{I_{ne_{0}}(m)} \leq h^0 (X, \cO_X(mL)) - h^0 (X, \cO_X((m-nv_{r}) L) )  
\end{equation}
for all $m \geq n v_{r}$. Then, using the \autoref{Dfn.FsigBigDivisors} we may bound the $F$-signature $\s_X(L)$ as follows:
$$ \s_X(L) =  \lim _{r \to \infty } \frac{  \sum \limits _{m = 0} ^{ Cp^{re_{0}}} \dim_{k} \frac{H^0 (mL)}{I_{re_{0}}(mL)}}{p^{re_{0}(d+1)}} \leq \Bigg( \lim _{r \to \infty } \frac{  \sum \limits _{m = 0} ^{ Cp^{re_{0}}} h^0 (\cO_X(mL))}{p^{re_{0}(d+1)}} - \frac{  \sum \limits _{m = nv_{r}} ^{ Cp^{re_{0}}} h^0 (\cO_X((m- n v_{r}) L) )}{p^{re_{0} (d+1)}} \Bigg).$$
Finally, calculating the dimensions in the above inequality using \autoref{lem:sumvol},
we obtain
\[    \s_X(L) \leq \frac{\vol(L)}{(d+1)!} \big( C^{d+1} - (C - \frac{n}{p^{e_{0}} -1}) ^{d+1} \big).  \]
The proof of the lemma is now complete by using the fact that $\lambda \geq \frac{n}{p^{e_0} -1}$.
\end{proof}

As a consequence, we deduce the following characterization of global $F$-regularity in terms of the $F$-signature of \emph{any} big divisor over $X$.

\begin{thm} \label{thm:positivityofFsigbig}
    Suppose $X$ is a normal projective variety and $L$ is a big divisor over $X$. Suppose $X$ is globally $F$-regular. Then, both $\FA(X,L)$ and $\s_X (L)$ (\autoref{dfn:Fsigbig}) are positive.
    
   Conversely, suppose that the $F$-signature $\s_X (L)$ as defined in \autoref{Dfn.FsigBigDivisors} is positive. Then, $X$ is globally $F$-regular.
\end{thm}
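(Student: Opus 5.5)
By \autoref{Prop:Fsigandalphacomparison}, $\s_X(L) \geq \vol(L)\FA(X,L)^{d+1}/(d+1)!$ and $\vol(L)>0$ since $L$ is big. So for the forward direction it suffices to show $\FA(X,L)>0$. For the converse, I would show that if $X$ is not globally $F$-regular then $\s_X(L)=0$. The plan is to detect a failure of global $F$-regularity through a single non-splitting section of a multiple of $L$, and then push this through \autoref{postivitylemma}.

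\textbf{Forward direction.} Assume $X$ is globally $F$-regular. Since $L$ is big, pick $r\geq 1$ and an effective divisor $D\in|rL|$. Global $F$-regularity applied to $D$ gives some $e_0$ such that $\cO_X\to F^{e_0}_*\cO_X(D)$ splits. The standard iteration argument (used in the proof of \autoref{postivitylemma}) then shows that $\cO_X\to F^{e}_*\cO_X(D)$ splits for all $e\geq e_0$. A splitting of $\cO_X\to F^e_*\cO_X(\lceil p^e t\rceil D)$ for suitable $t$ is what we need.

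I would instead argue via \autoref{Prop:addingeffective} and an ample divisor. Write $rL\sim A+E$ with $A$ ample and $E$ effective (Kodaira's lemma). Then I want a lower bound on $\FA(X,rL)$ that is stronger than monotonicity alone would give, since monotonicity points the wrong way. Concretely:

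1. By global $F$-regularity, choose $e_1$ so that $\cO_X\to F^{e_1}_*\cO_X(E+A)$ splits.
2. Composing splittings, for every $e$ and every $m\leq c p^e$ with $c>0$ fixed, each section in $H^0(mrL)$ defines a divisor $\leq$ some multiple of $E+A$ that still splits.

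The cleanest route is to use the known result for ample divisors, $\FA(X,A)>0$ (\cite{PandeFrobeniusVersionTiansAlphaInvariant}, equivalent to $\s_X(A)>0$ via \cite{SchwedeSmithLogFanoVsGloballyFRegular}), together with a pair version that absorbs $E$. Since $(X,\epsilon E)$ is globally $F$-regular for small $\epsilon$, the $\FA$-invariant of $A$ with respect to the pair $(X,\epsilon E)$ is positive. Then for $f\in H^0(mrL)$ with $m\leq \delta p^e$, the divisor of $f$ is $\leq$ (a section of $mA$) $+\,mE$. Splitting of $\cO_X\to F^e_*\cO_X(\mathrm{div}_{mA}+mE)$ follows once $m/p^e<\min(\epsilon,\alpha)$, which gives $\FA(X,rL)\geq\delta>0$. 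By \autoref{Prop:transformationforalpha}, $\FA(X,L)>0$ and hence $\s_X(L)>0$.

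The main obstacle is this step: for a section $f$ of $m(A+E)$ there is no decomposition $f=gh$. The fix is to choose $A$ such that $H^0(m rL)\hookrightarrow H^0(mA+mE)$ is handled via duality. Alternatively, pick a single ample $H$ with $rL\leq H$, i.e. $H-rL$ effective, apply \autoref{lem:Ieeffective} to get $I_e(mrL)\subseteq I_e(mH)$ (viewing $H^0(mrL)\subset H^0(mH)$), and conclude $m_e(X,rL)\geq m_e(X,H)$. This gives $\FA(X,rL)\geq\FA(X,H)>0$ directly by \autoref{Prop:addingeffective} applied in reverse. This is the version I would actually use, since it only requires an ample $H$ dominating $rL$, which exists.

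\textbf{Converse.} Suppose $X$ is not globally $F$-regular. Then there is an effective divisor $D$ such that $\cO_X\to F^e_*\cO_X(D)$ never splits. Enlarge $D$ to an effective divisor $D'\in|nL|$ for some $n$, which is possible since $L$ is big: take $n$ with $nL-D$ linearly equivalent to an effective divisor. Non-splitting persists under enlarging $D$, so the defining section $f$ of $D'$ lies in $I_e(nL)$ for every $e$. Apply \autoref{postivitylemma} with $\lambda=n/(p^{e_0}-1)$ and let $e_0\to\infty$. The upper bound $\frac{\vol(L)}{(d+1)!}(C^{d+1}-(C-\lambda)^{d+1})$ then tends to $0$, so $\s_X(L)=0$, a contradiction.
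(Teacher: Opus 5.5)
Your proposal is correct. The forward direction, once the abandoned detours are removed (the pair-$\FA$ attempt and the remark that monotonicity ``points the wrong way''), is exactly the paper's argument. You choose an ample $H$ with $H-L$ effective; the auxiliary $r$ is unnecessary. Then \autoref{Prop:addingeffective} gives $\FA(X,L)\geq \FA(X,H)>0$ as a direct application, not one ``in reverse''. Finally \autoref{Prop:Fsigandalphacomparison} gives $\s_X(L)>0$.

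The converse is where you genuinely differ from the paper.

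The paper argues forward through the $\FA$-invariant and the ample case:
\begin{itemize}
\item $\s_X(L)>0$ forces $\FA(X,L)>0$ by the upper bound in \autoref{Prop:Fsigandalphacomparison}.
\item Writing $nL=H+E$ with $H$ ample, monotonicity and \autoref{Prop:transformationforalpha} give $\FA(X,H)\geq \FA(X,L)/n>0$, hence $\s_X(H)>0$.
\item Global $F$-regularity is then imported from the ample theory: $\s_X(H)$ is identified with the $F$-signature of the section ring, and the Aberbach--Leuschke and Smith characterizations finish the argument.
\end{itemize}

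You instead argue by contraposition, directly from the definition:
\begin{itemize}
\item A never-splitting effective Weil divisor $D$ is absorbed into some $D'\in|nL|$.
\item The section $f$ of $D'$ lies in $I_e(nL)$ for every $e$. Here you implicitly use $H^0(X,\cO_X)=k$: a nonzero value $\varphi(F^e_*f)$ could be rescaled to a splitting.
\item \autoref{postivitylemma} with $\lambda=n/(p^{e_0}-1)\to 0$ then forces $\s_X(L)=0$.
\end{itemize}

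Your route is more self-contained. It never leaves $L$, and it needs neither the ample-case theorem nor the section-ring interpretation of $\s_X(H)$. The paper's route instead reuses the same $\FA$ monotonicity machinery in both directions. It also records that positivity of $\FA$ transfers between a big $L$ and any ample $H$, which is used again later (e.g.\ in \autoref{thm:LowerBoundsSpreadout}).

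One detail should be written out. Since $D$ is only a Weil divisor, the claim that $nL-D$ is linearly equivalent to an effective divisor for $n\gg 0$ needs justification. Choose an ample Cartier $A$ such that $\cO_X(A-D)$ has a nonzero section. Then apply Kodaira's lemma to $nL-A$ and add the two effective divisors.
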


\begin{proof}
    First assume that $X$ is globally $F$-regular. Then, by \cite[Theorem 4.6]{PandeFrobeniusVersionTiansAlphaInvariant}, the invariant $\FA(X, H)$ is positive for any ample divisor $H$. Pick a very ample divisor $H$ such that $H - L = E$ is linearly equivalent to an effective divisor $E$. Then by \autoref{Prop:addingeffective}, we have $\FA(X, L) \geq \FA(X, H = L +E) > 0 .$ Therefore, we see that $\FA (X, L) >0$. Using \autoref{Prop:Fsigandalphacomparison}, we see that $\s_X (L) > 0$ as well since $\vol (L) >0$ for the big divisor $L$.

    For the converse, we will show that if the $F$-signature of a big divisor $L$ is positive, then the $F$-signature of any ample divisor $H$ over $X$ is positive as well, from which the theorem follows by the local case proved in \cite{AberbachLeuschke}. First, we claim that if $\s_ X(L)$ is positive, then $\FA(X,L)$ is positive as well. This follows immediately from \autoref{Prop:Fsigandalphacomparison}. Let $H$ be any ample Cartier divisor on $X$. Since $L$ is assumed to be big, we have $nL - H$ is effective for some $n \gg 0$. Therefore, we may write $nL = H + E$. By the monotonicity (\autoref{Prop:addingeffective}) and the transformation property (\autoref{Prop:transformationforalpha}) of the $\FA$-invariant, we obtain
    \[ \FA(X,H) \geq \FA(X, nL) = \frac{1}{n} \FA(X,L) > 0 .\]
    Therefore, we have $\FA(X,H) >0 $ and hence by \autoref{Prop:Fsigandalphacomparison} again, we have $\s_X (H) > 0$.
\end{proof}

\subsection{Continuity of the $\FA$-invariant on the big cone}

Now we show that the $\alpha_F$-invariant is continuous on the big cone. This is significantly easier than the case of the $F$-signature thanks to its monotonicity.

\begin{thm} \label{thm:alphaonbigcone}
    Let $X$ be a globally $F$-regular projective variety over $k$. Then, the $\FA$-invariant defines a well-defined function $\FA(X, \_)$ on the real big cone of $X$
    \[\FA(X, \_) : \text{Big}_\RR(X) \to \RR\]
    satisfying the following properties:
    \begin{enumerate}
        \item $\FA(X,L)$ is a locally Lipshcitz function of $L$.
        \item For any big rational class $L$, $\FA(X,L)$ is the same as defined in \autoref{dfn:alphaofQQdiv}.
        \item For any big real class $L$ and any integer $n \geq 1$, we have $\FA(X,nL) = \frac{\FA(X,L)}{n}$.
        \item  For any big real class $L$ and any effective class $E$ in $\text{N}^1 _\RR (X)$, we have $\FA(X,L) \geq \FA(X, L+E)$.
        \item For any big real class $L$, we have $\FA(X,L)$ is positive.
    \end{enumerate}
\end{thm}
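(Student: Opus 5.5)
The plan is to first establish the properties on the rational big cone $\mathrm{Big}_\QQ(X)$ and then extend by continuity via \autoref{lem:LipschitzContinuousExtension}, exactly as for the $F$-signature. Global $F$-regularity makes numerical and linear equivalence agree \cite[Theorem 3.4]{LeePandeFsigAmpCone}, so \autoref{dfn:alphaofQQdiv} gives a well-defined function on rational classes. On rational classes the transformation rule holds by \autoref{Prop:transformationforalpha}, extended to positive rationals $t$ in the form $\FA(X,tL)=\FA(X,L)/t$. Monotonicity under adding effective $\QQ$-divisors is \autoref{Prop:addingeffective}, and positivity is \autoref{thm:positivityofFsigbig}.

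For the Lipschitz estimate, fix a real big class $z$ and a ball $B_{R}(z)$ whose closure lies in the big cone. Take rational $x,y$ in this ball with $y-x$ in the (open) big cone. We may write $y-x=\delta E$ with $E$ effective, where we would like $\|E\|$ controlled, but we must compare $\FA(X,x)$ and $\FA(X,x+\delta E)$ in both directions.

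One direction is free from monotonicity: $\FA(X,y)\le \FA(X,x)$. For the other, use scaling. Fix a rational ample class $h$ near the direction of $z$; more robustly, use $x$ itself, since $x$ is big. Since $x$ is big and $x-y$ is small, for $t=1+c\|x-y\|$ with a suitable constant $c$ depending on $z$, the class $tx-y=(t-1)x-(y-x)$ should be effective. It is big because $(t-1)x$ dominates a small perturbation. The key uniform input is a constant $c(z)$ with $\epsilon x+v$ big for all $x\in B_R(z)$ and all $\|v\|\le \epsilon/c$; this follows from openness of the big cone and compactness of $\overline{B_R(z)}$, using that $\|x\|$ is bounded below on the ball. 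Then monotonicity and the transformation rule give
\[ \FA(X,x)/t=\FA(X,tx)=\FA(X,y+(tx-y))\le \FA(X,y). \]
Hence $0\le \FA(X,x)-\FA(X,y)\le (1-1/t)\FA(X,x)\le c\|x-y\|\,\FA(X,x)$.

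It remains to bound $\FA(X,x)$ uniformly on the ball. Choose a rational class $w$ with $w-x$ effective for all $x$ in the ball, for instance a small rational multiple of a class in the interior of $z$'s neighborhood. Alternatively, use the crude bound $\alpha_e\le C/\|x\|$ from \autoref{Thm:DegenSpaceEqualsWholeSpace}, which gives $\FA(X,x)\le C/\|x\|$ since $I_e(mL)=H^0(mL)\neq 0$ for $m>Cp^e/\|L\|$ (at least once $H^0\neq0$). This gives the hypothesis of \autoref{lem:LipschitzContinuousExtension}, which produces a unique locally Lipschitz extension to $\mathrm{Big}_\RR(X)$, proving (a) and (b).

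Properties (c) and (d) pass to real classes by continuity: approximate $L$ and $E$ by rational classes $L_i\to L$ and effective rational $E_i\to E$. This works because effective real classes are limits of effective rational ones, taking $E=\sum a_jE_j$ and rounding the $a_j$.

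For (e), positivity on real classes needs more than continuity, since a limit of positive values could be zero. Given a real big $L$, choose a rational big $L'$ with $L-L'$ an effective real class, possible because the big cone is open: take $L'=L-\epsilon A$ perturbed rationally, with $A$ ample and $\epsilon A$ effective. Then (d) gives $\FA(X,L)$ is at least… but (d) only bounds in the wrong direction. So instead pick a rational $L''$ with $L''-L$ effective, for instance $L''$ a rational point of $L+\epsilon A$. Then $\FA(X,L)\ge \FA(X,L'')>0$ by (d) and the rational positivity. The main obstacle I expect is the uniform constant $c(z)$ in the reverse Lipschitz estimate, together with the care needed to phrase effectivity of real classes.
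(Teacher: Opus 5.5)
Your proposal is correct and follows essentially the paper's route: an effective-direction Lipschitz estimate from scaling plus monotonicity (the paper's \autoref{lem.AlphaInvInequality} uses both the $(1+a)$- and $(1-a)$-scalings, whereas you get one side directly from monotonicity), the uniform bound $\FA(X,L)\le C/\|L\|$ of \autoref{lem:upperboundonalpha}, \autoref{lem:LipschitzContinuousExtension} for the extension, continuity for (c) and (d), and comparison with a rational big class $L''$ with $L''-L$ effective for (e). Two small points: your first suggestion for bounding $\FA(X,x)$ has the direction reversed, since $w-x$ effective only gives the lower bound $\FA(X,x)\ge \FA(X,w)$ and you need $x-w$ effective instead (your second suggestion, the $C/\|x\|$ bound, is exactly what the paper uses); and $t$ should be taken rational so that $tx$ remains a rational class.
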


\begin{lem}\label{lem.AlphaInvInequality}
Suppose that there is a number $0 <a<1$ such that $aL+D$ and $aL-D$ are both big. Then,  \[|\alpha_F(X, L) - \alpha_F(X,L+D)| \leq \frac{a}{1-a}\alpha_F(X,L)\]
\end{lem}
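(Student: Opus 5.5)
The plan is to use only the two structural properties already established: monotonicity under adding effective classes (\autoref{Prop:addingeffective}) and the scaling rule (\autoref{Prop:transformationforalpha}, extended to positive rational multiples via \autoref{dfn:alphaofQQdiv}). Throughout we work with $\QQ$-classes. The hypothesis that $aL+D$ and $aL-D$ are big will be used in the weaker form that both are effective up to $\QQ$-linear equivalence; after passing to a multiple, a big class has an effective representative. The idea is to sandwich $L+D$ between two rescalings of $L$, each differing from $L+D$ by an effective class.

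\textbf{Upper bound for $\FA(X,L+D)$.} Write
\[ L+D = (1-a)L + (aL+D). \]
Since $aL+D$ is effective, monotonicity gives
\[ \FA(X,L+D) \leq \FA(X,(1-a)L) = \frac{\FA(X,L)}{1-a}. \]
This uses $0<a<1$, so that $(1-a)L$ is still big. Hence
\[ \FA(X,L+D)-\FA(X,L) \leq \frac{a}{1-a}\FA(X,L). \]

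\textbf{Lower bound for $\FA(X,L+D)$.} Write
\[ (1+a)L = (L+D) + (aL-D). \]
Since $aL-D$ is effective, monotonicity gives
\[ \FA(X,(1+a)L) \leq \FA(X,L+D), \]
that is,
\[ \FA(X,L+D) \geq \frac{\FA(X,L)}{1+a}. \]
For this step we need $L+D$ to be big. This holds because $L+D$ is the sum of the big class $(1-a)L$ and the pseudo-effective class $aL+D$. Therefore
\[ \FA(X,L)-\FA(X,L+D) \leq \frac{a}{1+a}\FA(X,L) \leq \frac{a}{1-a}\FA(X,L). \]
Combining the two bounds gives the lemma.

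\textbf{Main obstacle.} The only delicate point is the scaling rule $\FA(X,tL)=\FA(X,L)/t$ for rational $t>0$ such as $1\pm a$, since \autoref{Prop:transformationforalpha} is stated only for integers. It extends to rationals by clearing denominators: apply \autoref{dfn:alphaofQQdiv} to $rtL$ for a common Cartier multiple $r$. We also need monotonicity for effective $\QQ$-classes up to $\QQ$-linear equivalence. This follows from \autoref{Prop:addingeffective} together with the fact that linearly equivalent Cartier divisors give the same $I_e$ dimensions. If $a$ is irrational, approximate it from above by a rational $a'<1$, which keeps $a'L\pm D$ big, and then let $a'\to a$.
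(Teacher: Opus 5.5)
Your proof is correct and is essentially the paper's argument: it uses the same two decompositions $(1+a)L=(L+D)+(aL-D)$ and $L+D=(1-a)L+(aL+D)$, together with monotonicity (\autoref{Prop:addingeffective}) and the scaling rule. The only difference is cosmetic: the paper bounds both one-sided differences by $a\,\FA(X,L+D)$ and then uses $\FA(X,L+D)\le \FA(X,L)/(1-a)$, and your remarks on rational scaling and on approximating an irrational $a$ from above make explicit details the paper leaves implicit.
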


\begin{proof}
Applying \autoref{Prop:addingeffective}, we have \begin{equation}\label{Eqn.ContinuityofAlphaInv1}\begin{split}\alpha_F(X, L) &= (1+a)\alpha_F(X,(1+a)L)  \\ &= (1+a)\alpha_F(X, (L+D) + (aL-D)) \\ &\leq (1+a) \alpha_F(X,L+D) \end{split}\end{equation} In other words, we have $\alpha_F(X,L) - \alpha_F(X,L+D)\leq a\alpha_F(X,L+D)$.
On the other hand, \begin{equation}\label{Eqn.ContinuityofAlphaInv2}\begin{split}\alpha_F(X, L) &= (1-a)\alpha_F(X,(1-a)L)  \\ &= (1-a)\alpha_F(X, (L+D) - (aL+D)) \\ &\geq (1-a) \alpha_F(X,L+D) \end{split}\end{equation} Combining \autoref{Eqn.ContinuityofAlphaInv1} and \autoref{Eqn.ContinuityofAlphaInv2}, we obtain $|\alpha_F(X, L) - \alpha_F(X,L+D)| \leq a\alpha_F(X,L+D) \leq \frac{a}{1-a}\alpha_F(X,L)$. \end{proof}

\begin{lem} \label{lem:upperboundonalpha}
Let $X$ be a normal projective variety over $k$. Fix a norm $\| \, \|$ on the N\'eron-Severi space of $X$. Then there exists a constant $C>0$ (depending only on $X$) such that for all big $\QQ$-divisors $L$, we have
\[ \FA(X,L) \leq \frac{C}{\|L\| } .\]
\end{lem}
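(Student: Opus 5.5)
The approach is to bound $m_e(X,L)$ from above using \autoref{Thm:DegenSpaceEqualsWholeSpace}, and then pass from Cartier to $\QQ$-divisors using the transformation rule. First take $L$ a big Cartier divisor. Since $L$ is big, some multiple $rL$ is linearly equivalent to an effective divisor. But we should not apply \autoref{Thm:DegenSpaceEqualsWholeSpace} to $rL$ directly, because the constant must stay uniform in $L$. Instead, I will apply \autoref{Thm:DegenSpaceEqualsWholeSpace}(a) in the form used throughout the paper: there is a constant $C$, depending only on $X$ and the norm, such that $I_e(mL)=H^0(mL)$ for all $e\ge 1$ and all $m> Cp^e/\|L\|$. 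This is exactly how it is invoked in the remark after \autoref{MainthmA} and in \autoref{Prop:signaturevolumecomparison} for big $L$.

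Next I show $m_e(X,L)\le Cp^e/\|L\| + M_L$, where $M_L$ is any integer with $H^0(mL)\neq 0$ for all $m\ge M_L$. Such an $M_L$ exists only for $m$ sufficiently divisible in general. To handle this, I use $I_e(mL)=H^0(mL)$ together with the inclusion $I_e(mL)\subseteq I_e(mL+E)$ for effective $E$ (\autoref{lem:Ieeffective}), as in the proof of \autoref{Prop:transformationforalpha}. Pick $m_0=\lfloor Cp^e/\|L\|\rfloor+1$ and a nonzero section of $(m_0+j)L$ for some $0\le j< M_L$, with $M_L$ independent of $e$. Then $I_e((m_0+j)L)\neq 0$, so $m_e(X,L)< m_0+M_L$. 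Dividing by $p^e$ and letting $e\to\infty$ gives $\FA(X,L)\le C/\|L\|$, since the additive constant disappears in the limit.

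For a big $\QQ$-divisor $L$, choose $n$ with $nL$ Cartier. By \autoref{dfn:alphaofQQdiv}, $\FA(X,L)=n\,\FA(X,nL)\le nC/\|nL\|=C/\|L\|$, so the same constant works. If the norm bound in \autoref{Thm:DegenSpaceEqualsWholeSpace} is only stated for effective $L$, I first replace $L$ by a linearly equivalent effective divisor $r L\sim L_r$. Numerical classes are unchanged, so $\|rL\|=r\|L\|$, and the $I_e$ spaces are preserved under linear equivalence. The same scaling argument then applies.

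The main obstacle is uniformity: the constant $C$ must be independent of $L$. This rests entirely on the uniform form of \autoref{Thm:DegenSpaceEqualsWholeSpace}. A secondary issue is the nonvanishing of $H^0(mL)$ for $m$ just above the threshold. I expect to handle it either by the $e$-independent shift $M_L$, which disappears after dividing by $p^e$, or by first passing to a multiple $rL$ that is effective. In the latter case every $mrL$ has a nonzero section, and the transformation rule \autoref{Prop:transformationforalpha} transfers the bound back to $L$.
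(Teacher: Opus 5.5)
Your proposal is correct and follows the paper's proof: use the uniform constant from \autoref{Thm:DegenSpaceEqualsWholeSpace} to get $m_e(X,L)\le Cp^e/\|L\|$ for $e\gg 0$, divide by $p^e$, and pass to $\QQ$-divisors by homogeneity. Your replacement of $L$ by an effective multiple $rL$ is extra care the paper skips, since that theorem is stated for effective $L$.

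One small correction to your middle step. The worry that $H^0(mL)\neq 0$ only for sufficiently divisible $m$ is unfounded for big $L$: writing $aL\sim A+N$ with $A$ ample and $N$ effective gives $H^0(mL)\neq 0$ for all $m\gg 0$, and the paper uses this. Moreover, a single nonzero $I_e((m_0+j)L)$ would not bound $m_e(X,L)$, which is a maximum over all $m$. So rely on the version with $H^0(mL)\neq 0$ for all $m\ge M_L$, or on your effective-multiple route; both are valid.
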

\begin{proof}
    By the homogeneity of $\FA(X,L)$ (\autoref{Prop:transformationforalpha}), it is sufficient to prove the lemma for all big Cartier divisors $L$. From \cite[Theorem~4.9]{LeePandeFsigAmpCone}, we have a constant $C>0$ such that for all $e>0 $ and all big Cartier divisors $L$, we have $I_e (mL) = H^0 (mL)$ for all $m \geq \frac{Cp^e}{\| L \|}$. In particular, for a fixed big divisor $L$ and for all $e \gg 0$ so that $H^0 (mL) \neq 0$ for $m \geq \frac{Cp^e}{\| L \|}$, we have $m _e (L) \leq \frac{Cp^e}{\| L \|} $. It follows that $\FA(X, L) \leq \frac{C}{\|L\|}$ for all big $\QQ$-divisors $L$.
\end{proof}
\begin{lem}\label{lem:Alphainvariantlipschitzeffectivedirection}
Let $X$ be a normal projective variety and $L$ be a big $\RR$ divisor. Fix a norm $\| \, \|$ on the N\'eron-Severi space of $X$. Let $L$ be a big $\mathbb{R}$-divisor on $X$ Then, there are constants $C, \delta >0$ depending only on $L$ such that for any two big $\QQ$-divisors $L_1, L_2\in B_\delta(L)$ such that $L_2-L_1$ is effective and $\|L_2-L_1\|<\frac{\|L_2\|}{2}$, we have \[|\alpha_F(X, L_1) - \alpha_F(X,L_2)|\leq C \| L_1 - L_2 \|.\] 

\end{lem}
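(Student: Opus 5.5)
Write $D := L_2 - L_1$, which is an effective $\QQ$-divisor. The plan is to reduce to \autoref{lem.AlphaInvInequality} applied to the pair $(L_1, D)$ with $a$ proportional to $\|D\|$, and then to bound $\FA(X,L_1)$ uniformly on a small ball around $L$ using \autoref{lem:upperboundonalpha}.

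\textbf{Choice of $\delta$ and $r$.} Since $\mathrm{Big}(X)$ is open, I first choose $r>0$ with $\overline{B_{2r}(L)} \subseteq \mathrm{Big}(X)$ and $r < \|L\|/4$. I then set $\delta = r/4$. For $L_1 \in B_\delta(L)$ this gives $\|L_1\| \geq \|L\| - \delta \geq 3r$.

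\textbf{Choice of $a$.} I take $a := \|D\|/r$. For $a$ to be admissible, I need $a$ small, say $a \leq 1/2$. Since both $L_1, L_2 \in B_\delta(L)$, we have $\|D\| < 2\delta = r/2$, so $a < 1/2$ automatically. The hypothesis $\|L_2 - L_1\| < \|L_2\|/2$ is then not even needed beyond this.

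\textbf{Bigness of $aL_1 \pm D$.} These divisors are big if and only if $L_1 \pm D/a$ is big. Now
\[ \|L_1 \pm D/a - L\| \leq \delta + r < 2r, \]
so $L_1 \pm D/a \in B_{2r}(L) \subseteq \mathrm{Big}(X)$. If $D=0$ there is nothing to prove. The rationality of $a$ can be arranged by taking $a$ to be a rational number slightly larger than $\|D\|/r$, still below $1/2$; the same ball estimate survives with $2r$ replaced by a slightly larger radius, so I pick $r$ with $\overline{B_{3r}(L)}\subseteq \mathrm{Big}(X)$ to have room.

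\textbf{Conclusion.} With these choices, \autoref{lem.AlphaInvInequality} gives
\[ |\FA(X,L_1) - \FA(X,L_2)| \leq \frac{a}{1-a}\FA(X,L_1) \leq 2a\,\FA(X,L_1). \]
By \autoref{lem:upperboundonalpha}, $\FA(X,L_1) \leq C_0/\|L_1\| \leq C_0/(3r)$. Substituting $a \leq 2\|D\|/r$ yields
\[ |\FA(X,L_1) - \FA(X,L_2)| \leq \frac{4C_0}{3r^2}\|L_1 - L_2\|, \]
so $C = 4C_0/(3r^2)$ works and depends only on $L$ through $r$.

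Effectivity of $D$ is not actually used by this argument; it is only what the later application supplies.

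\textbf{Main obstacle.} The substantive check is verifying the two-sided bigness needed in \autoref{lem.AlphaInvInequality} uniformly in $L_1$, together with the minor care needed so that all divisors involved are $\QQ$-divisors, since that lemma is stated for big $\QQ$-divisors via \autoref{dfn:alphaofQQdiv}.
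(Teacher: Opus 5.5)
Your proposal is correct and is essentially the paper's argument: apply \autoref{lem.AlphaInvInequality} to $L_1$ and $D=L_2-L_1$ with $a$ of order $\|D\|/r$, then bound $\FA(X,L_1)$ uniformly on the ball via \autoref{lem:upperboundonalpha}. The paper takes $a=\frac{4\|D\|}{r+4\|D\|}$, so that $\frac{a}{1-a}=\frac{4\|D\|}{r}$ exactly; like you, it never uses the effectivity of $D$ or the hypothesis $\|L_2-L_1\|<\|L_2\|/2$.

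Choosing $a$ rational is a worthwhile refinement the paper omits, since its $a$ need not be rational while the proof of \autoref{lem.AlphaInvInequality} rescales by $1\pm a$. Note also that enlarging $a$ only shrinks $D/a$, so the extra room $\overline{B_{3r}(L)}$ is unnecessary.
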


\begin{proof}
 Fix $0<r <\frac{1}{2}\|L\|$ such that $B_r(L) \subseteq \mathrm{Big}(X)$. Let $\delta = \frac{r}{4}$. Let $L_1, L_2\in B_\delta(L)$ be big $\QQ$-divisors and $L_2-L_1$ is effective. Let $D= L_2 - L_1$. Note that $L_1+D$ and $L_1-D$ are in $B_{\frac{1}{2}r}(L)$, hence they are both big. Now, we observe that $L_1+\left(1+\frac{r}{4\|D\|}\right)D$ and $L_1-\left(1+\frac{r}{4\|D\|}\right)D$ are in $B_r(L)$ since\[\begin{split}\left\|L_1+\left(1+\frac{r}{4\|D\|}\right)D - L\right\| &\leq \left\|D+\frac{r}{4\|D\|}D\right\| + \|L_1 - L\|\\ & \leq \| D \| +\left\|\frac{r}{4\|D\|}\right\|\|D\|+\|L_1 - L\|\\ &< \frac{1}{2}r + \frac{1}{4}r + \frac{1}{4}r =r \end{split}\] Let $a = \frac{4\|D\|}{r+4\|D\|}$. Then, we have $aL_1-D$ and $aL_1+D$ are big and $0<a<1$. Thus, by \autoref{lem.AlphaInvInequality}, we have \[\begin{split}|\alpha_F(X,L_2) - \alpha_F(X,L_1)| &\leq \frac{a}{1-a}\alpha_F(X,L_1)\\
& = \frac{4}{r}\alpha_F(X,L_1)\|L_2-L_1\|\\ & \leq \frac{4C}{r\|L_1\|}\|L_2-L_1\| \text{ (by \autoref{lem:upperboundonalpha})} \\
& \leq \frac{32C}{7r\|L\|}\|L_2-L_1\|.
\end{split}\] The last inequality is because we had $\|L_1-L\| <\frac{1}{4}r$ and $r<\frac{1}{2}\|L\|$. Hence, $\|L\| = \|L-L_1+L_1\| \leq \|L-L_1 \| + \|L_1\| \leq \frac{1}{8}\|L\| + \|L_1\|$. Thus, we have $\frac{7}{8}\|L\| \leq \|L_1\|$.
\end{proof}

\begin{Pn} \label{Prop:alphacontinuity}
Let $X$ be a normal projective variety and $L$ be a big $\RR$ divisor. Fix a norm $\| \, \|$ on the N\'eron-Severi space of $X$. Then, there are constants $C, \delta >0$ such that for any two big $\QQ$-divisors $L_1, L_2$ such that $\|L_i - L\| < \delta$ for $i=1,2$, we have \[|\alpha_F(X, L_1) - \alpha_F(X,L_2)|\leq C \| L_1 - L_2 \|.\] 
In other words, the function $\FA(X , .)$ is locally Lipschitz around every real big class $L$ in the N\'eron-Severi space.
\end{Pn}

\begin{proof}
We know that the Frobenius $\alpha$ invariant satisfies the transformation rule by \autoref{Prop:transformationforalpha}. Furthermore, by \autoref{lem:Alphainvariantlipschitzeffectivedirection}, it is locally Lipschitz continuous in effective directions. By \autoref{lem:LipschitzContinuousExtension}, we conclude that the alpha invariant is locally Lipschitz in any direction. 
\end{proof}

\begin{proof}[Proof of \autoref{thm:alphaonbigcone}]
For any big $\RR$-divisor $L$ on $X$, we may define $\FA(X,L)$ by the formula
\[ \FA(X,L) = \lim_{m \to \infty} \FA(X,L_m)  \]
for any sequence of big $\QQ$-divisors $L_m$ converging to $L$ in $\text{N}^1 _\RR (X)$. By \autoref{Prop:alphacontinuity}, this is well-defined and makes $\FA(X,L)$ a locally Lipschitz continuous function of $L$. This proves Parts (a) and (b). Part (c) follows from the continuity of $\FA(X,L)$ and \autoref{Prop:transformationforalpha}. Similarly, writing both $L$ and $E$ as a limit of big $\QQ$-divisors $L_m$ and $E_m$, Part (d) follows from \autoref{Prop:addingeffective}.

For Part (e), we first pick a big $\QQ$-divisor $L'$ such that $L' - L$ is effective. This can be done by the openness of the big cone. By Part (d), we have that $\FA(X,L) \geq \FA(X,L')$. Therefore, it is sufficient to show that $\FA(X.L')$ is positive. Moreover, by the transformation rule Part (c), we may further assume that $L'$ is a big Cartier divisor. Now pick an ample Cartier divisor $H$ and write $L'+ E  = nH$ for some $n \gg 0$ and an effective Cartier divisor $E$. This can be done since $H$ is ample. Therefore, we have
 \[ \FA(X,L') \geq \FA(X, L'+ E) = \FA (nH) > 0 \]
where in the last inequality we are using the fact that the $\FA$-invariant of an ample divisor on a globally $F$-regular projective variety is positive \cite[Theorem~4.6]{PandeFrobeniusVersionTiansAlphaInvariant}. This completes the proof of the theorem.
\end{proof}

\subsection{Positivity of the $F$-signature for big $\RR$-divisors.}
\begin{thm} \label{thm:positivityforbig}
Let $X$ be a normal projective variety over $k$ of positive dimension $d$. Assume that $X$ is globally $F$-regular. Then for any big $\RR$-divisor $L$ on $X$, the $F$-signature $\s_X (L)$ is positive. Moreover, for any compact subset $K$ that is contained in the big cone, there exists a constant $C>0$ (depending on $K$) such that $\s_X(L) \geq C \vol(L)$ for all $L  \in K$.
\end{thm}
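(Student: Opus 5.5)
The plan is to push the comparison inequality of \autoref{Prop:Fsigandalphacomparison} from big $\QQ$-divisors to all big $\RR$-divisors. The $F$-signature, the volume and the $\FA$-invariant are all continuous on the big cone, by \autoref{thm:FsigLocallyLipschitz}, the standard continuity of $\vol$, and \autoref{thm:alphaonbigcone}. Once the inequality holds for real classes, positivity and the uniform bound follow from compactness.

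The first step is to record the inequality for rational classes. For a big Cartier divisor $L$, \autoref{Prop:Fsigandalphacomparison} gives
\[ \s_X(L) \geq \frac{\vol(L)\,\FA(X,L)^{d+1}}{(d+1)!}. \]
For a big $\QQ$-divisor $L$, I choose $r$ with $rL$ Cartier and use \autoref{dfn:FsigofbigQdiv} and \autoref{dfn:alphaofQQdiv}, which give $\s_X(L)=r\s_X(rL)$ and $\FA(X,L)=r\FA(X,rL)$, together with $\vol(L)=\vol(rL)/r^d$. Both sides of the inequality then scale by exactly the factor $r$:
\[ r\cdot \frac{r^d\vol(L)\,\FA(X,L)^{d+1}r^{-(d+1)}}{(d+1)!} = \frac{\vol(L)\FA(X,L)^{d+1}}{(d+1)!}. \]
So the inequality holds on $\mathrm{Big}_\QQ(X)$.

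Next, for a real big class $L$, I take big $\QQ$-classes $L_m \to L$ and pass to the limit. Since $\s_X$, $\vol$ and $\FA(X,\cdot)$ are continuous on $\mathrm{Big}(X)$, the inequality holds for every big $\RR$-divisor. By \autoref{thm:alphaonbigcone}(e), $\FA(X,L)>0$, and $\vol(L)>0$ because $L$ is big. Hence $\s_X(L)>0$.

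Finally, let $K\subset \mathrm{Big}(X)$ be compact. The function $L\mapsto \FA(X,L)$ is continuous and strictly positive on $K$, so it attains a minimum $a>0$ there. Then $\s_X(L)\geq \frac{a^{d+1}}{(d+1)!}\vol(L)$ for all $L\in K$, and we take $C=a^{d+1}/(d+1)!$.

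The only delicate point is the well-definedness of the extensions, namely that the real-class values are the limits of the rational ones. This is exactly what \autoref{thm:FsigLocallyLipschitz} and \autoref{thm:alphaonbigcone} provide, and global $F$-regularity ensures that numerical classes determine the invariants. So I expect no real obstacle beyond checking the scaling in the first step.
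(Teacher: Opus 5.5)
Your proposal is correct and follows the paper's route. The paper extends the inequality $\s_X(L)\geq \vol(L)\,\FA(X,L)^{d+1}/(d+1)!$ of \autoref{Prop:Fsigandalphacomparison} to all big $\RR$-classes by continuity, then uses the positivity of $\FA$ from \autoref{thm:alphaonbigcone}(e) and its positive minimum on the compact set $K$; your explicit check that both sides scale by the same factor $r$ for $\QQ$-divisors spells out what the paper compresses into ``by continuous extension.''
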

\begin{proof}
    First we note that by using \autoref{Prop:Fsigandalphacomparison} and by continuous extension, for any big $\RR$-divisor $L$ we have
    \begin{equation} \label{eqn:Fsigalphainequality} \s_X(L) \geq \frac{\vol(L) \, \FA(X, L)^{d+1}}{(d+1)!}.  \end{equation}
    Now, since the function $\FA(X,L)$ is positive on the big cone (by Part (e) of \autoref{thm:alphaonbigcone}), the function $\FA(X, .)$ achieves a positive minimum on any compact subset contained in the big cone. Choose $M$ to be the minimum value of $\FA(X,.)$ on $K$. Then, choosing $C = \frac{M ^{d+1}}{(d+1)!}$ works for the Theorem by \autoref{eqn:Fsigalphainequality}.  The positivity of $\s_X (L)$ now follows from the positivity of $\vol(L)$ since $L$ is assumed to be big.
\end{proof}

\begin{rem}
    A special case of the above theorem is that the $F$-signature of ample $\RR$-divisors is positive on a globally $F$-regular variety. Even this case of the theorem is new and require the techniques involving the $\FA$-invariant developed in this section.
\end{rem}

\section{Global $F$-regularity and the $F$-signature function for graded sequences.} \label{section:gradedsequences}

In this section, we introduce the notion of globally $F$-regular pairs $(X, \fad)$, where $X$ is a projective variety over $k$ and $\fad$ is a graded sequence of coherent ideal sheaves on $X$. We also consider their $F$-signatures via the formalism of Cartier subalgebras as used in \cite{BlickleSchwedeTuckerFSigPairs1}.

\subsection{Global $F$-regularity and the $F$-signature.}

\begin{dfn} \label{gradedsequencedfn}
    Let $X$ be a normal variety over $k$. Then a \emph{graded sequence of ideal sheaves} on $X$, denoted by $\fad$, is a sequence of coherent ideal sheaves $(\fa_n)_{n \geq 1}$ such that for any integers $n,m \geq 1$, we have
    \[ \fa_m \cdot \fa_n \subset \fa_{m+n}.  \]
    In other words, for every affine open set $U \subset X$, the sequence $(\fa_n (U))_{n \geq 1}$ forms a graded sequence of ideals of $\cO_X(U)$. We will follow the convention that $\fa_0 = \cO_X$ for any graded sequence of ideal sheaves $\fad$. 
\end{dfn}

\begin{dfn} \label{def:gFrforgradedsequence}
Let $X$ be a normal variety over $k$ and $\fad$ be a graded sequence of ideal sheaves on $X$ (\autoref{gradedsequencedfn}). We say that the pair $(X, \fad)$ is \emph{globally $F$-regular} if for any effective Weil divisor $D$, there is an $e > 0$, and an element
\[\varphi\in H^0(\Fe\frak{a}_{p^e-1} \cdot \sheafhom_{\cO_X}(F^e_*\cO_X(D),\cO_X))\] 
such that $\varphi(F^e_* 1)=1$, where $\Fe 1$ is the global section of $\Fe
\cO_X (D)$ that corresponds to the effective divisor $D$. Note that via the natural inclusion
\[ F^e_*\frak{a}_{p^e-1} \cdot \sheafhom_{\cO_X}(F^e_*\cO_X(D),\cO_X)) \subset  \sheafhom_{\cO_X}(F^e_*\cO_X(D),\cO_X), \]
we may consider any element $\varphi \in H^0(\Fe\frak{a}_{p^e-1} \cdot \sheafhom_{\cO_X}(F^e_*\cO_X(D),\cO_X))$, as a map of $\cO_X$-modules from $\Fe \cO_X(D)$ to $\cO_X$.
\end{dfn}

It will be convenient to think of the splitting condition in the above definition in terms of the subspaces $I_e ^{\fad} $ as below:
\begin{dfn} \label{def:Iebullet}
    Let $(X,\fad)$ be a pair consisting of a normal projective variety $X$ and a graded sequence of ideal sheaves $\fad$. For any Weil divisor $D$ on $X$, we define the subspace $I_e^{\fad}(D)$ as \[I_e^{\fad}(D):= \{f\in H^0(\cO_X (D)) \mid \varphi(F^e_*f) = 0 \text{ for all } \varphi \in H^0(F^e_*\frak{a}_{p^e-1} \cdot \sheafhom_{\cO_X}(F^e_*\cO_X(D),\cO_X)). \}\]
\end{dfn}

Note that in the above definition, the subspace $I_e ^{\fad} (D)$ only depends on the sheaf $\cO_X(D)$ and not on the divisor $D$ chosen. In this notation, the pair $(X, \fad)$ is globally $F$-regular if and only if for any effective Weil divisor $D$, the global section $1 \in H^0 (X, \cO_X(D))$ is not contained in $I_e ^{\fad} $ for some $e >0$. Let $E$ be an effective divisor. Throughout this section, we will write $E\in I^{\fa_\bullet}_e(D)$ for any effective divisor $E \sim D$ if the global section of $\cO_X (D)$ defining $E$ belongs to $I^{\fa_\bullet}_e(D)$. 

With this notation, we can define the $F$-signature of a pair $(X, \fad)$ where $X$ is projective, with respect to an ample line bundle $\cL$ over $X$:
\begin{dfn} \label{def:fsigofidealsequence}
Let $(X, \fa_\bullet)$ be a pair consisting of a normal projective variety of dimension $d$ and a graded sequence of ideal sheaves $\fad$. We define the $F$-signature function of $\s_X(\fa_\bullet, L)$ for any ample divisor $L$ on $X$ as
\[ \s_X(\fad,L) = \lim_{e \to\infty} \frac{1}{p^{e(d+1)}}\sum_{r=0}^\infty \dim \frac{H^0(rL)}{ I^{\fad}_e(rL)}.\]
\end{dfn}

The existence of the limit in the above definition requires a proof, which is one of the goals of the rest of this section. We will prove:

\begin{thm} \label{thm:fsigofgradedsequences}
Let $X$ be a normal variety over $k$ and $\fa_\bullet$ be a graded sequence of ideal sheaves on $X$. Let $L$ be an ample divisor on $X$. Then, the $F$-signature $\s_X(\fa_\bullet, L)$ exists and is positive if and only if $(X,\fa_\bullet)$ is globally $F$-regular (\autoref{def:gFrforgradedsequence}).
\end{thm}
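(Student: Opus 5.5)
The natural route is to move to the section ring $S = S(X,L)$, which is a normal, finitely generated, $\NN$-graded domain with homogeneous maximal ideal $\fm$. There the statement should reduce to the known theory of $F$-signature of Cartier subalgebras from \cite{BlickleSchwedeTuckerFSigPairs1}.

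\textbf{Step 1: translate to a Cartier subalgebra on $S$.}
\begin{itemize}
\item Pull $\fad$ back to the punctured cone $\Spec S \setminus V(\fm)$ and push forward to get graded ideals $\tilde{\fa}_n \subset S$. Explicitly, $\tilde{\fa}_n = \bigoplus_m H^0(X, \fa_n \otimes \cO_X(mL))$. These form a graded sequence.
\item Consider the Cartier subalgebra $\mathscr{C}^{\tfad} = \bigoplus_e \mathscr{C}_e$, where $\mathscr{C}_e$ is the $F^e_*S$-submodule of $\Hom_S(F^e_*S,S)$ generated by $F^e_*\tilde{\fa}_{p^e-1}\cdot\Hom_S(F^e_*S,S)$. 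It is closed under composition because $\fa_{p^{e'}-1}^{p^e}\fa_{p^e-1} \subset \fa_{p^{e+e'}-1}$.
\item Using \autoref{lem.SectionRingVSCoherentSheaf}(c) and the twisting argument of \cite[Lemma 4.6]{LeePandeFsigAmpCone}, identify the degree-$r$ part of the splitting ideal:
\[ I_e^{\mathscr{C}}(S)_r = I_e^{\fad}(rL). \]
Here $I_e^{\mathscr{C}}$ consists of $f$ with $\varphi(F^e_* f)\in\fm$ for all $\varphi\in\mathscr{C}_e$.
\item The point of the identification is that a graded map $F^e_*S \to S$ is determined by maps $F^e_*\cO_X(rL)\to\cO_X(\tfrac{r-i}{p^e}L)$. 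A homogeneous $f$ of degree $r$ splits exactly when some degree-zero component, a map $F^e_*\cO_X(rL)\to\cO_X$ lying in the image of $\fa_{p^e-1}$, sends $f$ to a unit.
\end{itemize}

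\textbf{Step 2: existence and the positivity criterion.}
\begin{itemize}
\item Summing over $r$, the quantity $\sum_r \dim H^0(rL)/I_e^{\fad}(rL)$ is exactly $\ell_S(S/I_e^{\mathscr{C}})$. So $\s_X(\fad,L)$ is the $F$-signature $s(S,\mathscr{C}^{\tfad})$ of the Cartier algebra, and existence follows from \cite{BlickleSchwedeTuckerFSigPairs1}.
\item Since $\dim S = d+1$, the normalization $p^{e(d+1)}$ matches. The fact that we use $\Gamma_e = p^e$ rather than only sufficiently divisible $e$ is a technicality in that paper's setup that I will need to check.
\item The same paper shows that $s(S,\mathscr{C})>0$ if and only if the pair $(S,\mathscr{C})$ is strongly $F$-regular. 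So it remains to prove that $(S,\mathscr{C}^{\tfad})$ is strongly $F$-regular (at $\fm$, equivalently everywhere by gradedness) if and only if $(X,\fad)$ is globally $F$-regular in the sense of \autoref{def:gFrforgradedsequence}.
\end{itemize}

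\textbf{Step 3: the cone correspondence (main obstacle).} This is the analogue of \cite{SmithGloballyFRegular} and \cite{SchwedeSmithLogFanoVsGloballyFRegular} for Cartier algebras. I would argue each direction as follows.
\begin{itemize}
\item \emph{From $X$ to $S$.} Given $0\neq c\in S$, it suffices to treat homogeneous $c$ of degree $n$, since a power of $\fm$ lands in any nonzero ideal's radical after multiplying by a homogeneous element. Such a $c$ is a section of $\cO_X(nL)$ with divisor $D$. Global $F$-regularity of $(X,\fad)$ gives $\varphi \in H^0(F^e_*\fa_{p^e-1}\cdot\sheafhom(F^e_*\cO_X(D),\cO_X))$ with $\varphi(F^e_*1)=1$. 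Twisting by powers of $L$ promotes $\varphi$ to a graded map in $\mathscr{C}_e$ sending $F^e_*c\mapsto1$.
\item \emph{From $S$ to $X$.} Given an effective Weil divisor $D$, choose $n$ with $nL-D$ effective via a section $g$, and take $c\in S_n$ to be the corresponding section with divisor $D+\mathrm{div}(g)$. Strong $F$-regularity gives $\phi\in\mathscr{C}_e$ with $\phi(F^e_*c)=1$. Its degree-zero component is a sheaf map $F^e_*\cO_X(D+\mathrm{div}(g))\to\cO_X$ lying in $\fa_{p^e-1}\cdot\sheafhom$, and restricting along $\cO_X(D)\subset\cO_X(D+\mathrm{div}(g))$ gives the required splitting, via the monotonicity of \autoref{lem:Ieeffective}.
\end{itemize}

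The delicate point I expect is in the first bullet: checking that products $\fa_{p^e-1}\cdot\Hom$ correspond correctly under global sections and graded pieces. In particular, global sections of $F^e_*\fa_{p^e-1}\cdot\sheafhom$ need not be generated by products of global sections. I would first try to handle this by working sheaf-theoretically on $\Spec S\setminus V(\fm)$ and then extending over the vertex using $S_2$-ness (reflexivity) of the relevant Hom modules on the normal cone.
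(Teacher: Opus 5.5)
\textbf{The gap is in Step 1.} The identification $I_e^{\mathscr{C}}(S)_r=I_e^{\fad}(rL)$ is false for your algebra $\mathscr{C}_e=F^e_*\tilde{\fa}_{p^e-1}\cdot\Hom_S(F^e_*S,S)$. The point you flagged as delicate is therefore fatal, not technical.

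Put $M=(1-p^e)K_X-rL$. The part of $\mathscr{C}_e$ that carries $S_r$ to $S_0$ is the image of
\[\bigoplus_{\ell\ge 0}H^0(X,\fa_{p^e-1}\otimes\cO_X(\ell L))\otimes H^0(X,\cO_X(M-\ell L))\]
in $H^0(X,\fa_{p^e-1}\cdot\cO_X(M))$. By trace duality, however, $I_e^{\fad}(rL)$ is cut out by the whole target, and the image can be strictly smaller.

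Here is a counterexample. Take $X=\mathbb{P}^1\times\mathbb{P}^1$ and $L=\cO(1,2)$. Let $\fa_n=\fm_x^n$, where $x$ is the origin of the chart in which sections of $\cO(a,b)$ are polynomials in $s,t$ of bidegree at most $(a,b)$. Set $q=p^e-1$ and $r=q$, so that $M=\cO(q,0)$.
\begin{itemize}
\item Since $H^0(X,\fm_x^q)=0$ and $H^0(\cO(q-\ell,-2\ell))=0$ for $\ell\ge 1$, the relevant piece of $\mathscr{C}_e$ is zero. Hence $I_e^{\mathscr{C}}(S)_q=S_q$.
\item On the other hand, $s^q\in H^0(\fm_x^q\cdot\cO(q,0))$ and $t^q\in H^0(qL)$. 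The product $s^qt^q$ is exactly the monomial of $H^0(\cO(2q,2q))=H^0((1-p^e)K_X)$ on which $\mathrm{Tr}^e$ is nonzero. So $t^q\notin I_e^{\fad}(qL)$.
\end{itemize}
The same monomial bookkeeping shows more. We have $s^it^j\in I_e^{\mathscr{C}}(S)_r\setminus I_e^{\fad}(rL)$ whenever $j>q-r$ and $s^{q-i}t^{q-j}\in H^0(\fm_x^q\cdot\cO_X(M))$. This holds for a positive proportion of all $(r,i,j)$, so here even the limits differ: $s(S,\mathscr{C})<\s_X(\fad,L)$.

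Consequences for the rest of your plan:
\begin{itemize}
\item Step 2 would prove existence and positivity for the wrong quantity.
\item The ``$X\Rightarrow S$'' bullet of Step 3 fails for the same reason. Your twisted $\varphi$ is a global section of the sheaf $F^e_*\fa_{p^e-1}\cdot\sheafhom$, which need not lie in $\mathscr{C}_e$.
\item The ``$S\Rightarrow X$'' bullet is fine, since $\mathscr{C}_e$ is contained in the sheaf-theoretic version.
\end{itemize}

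\textbf{Your proposed remedy does not rescue $\mathscr{C}$.} Extending a map from the punctured cone across the vertex by $S_2$-ness lands in the reflexive module $\Hom_S(F^e_*S,S)$. But $F^e_*\tilde{\fa}_{p^e-1}\cdot\Hom_S(F^e_*S,S)$ is not saturated, so what you produce is its saturation, not an element of $\mathscr{C}_e$.

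\textbf{The correct fix, which is what the paper does,} is to take that saturation as the Cartier algebra from the outset:
\[\cD_{e,m}=H^0\big(X,F^e_*\fa_{p^e-1}\cdot\sheafhom_{\cO_X}(F^e_*\cO_X(-mL),\cO_X)\big).\]
With this choice:
\begin{itemize}
\item Closure under $\star$ follows from the global $\star$-operation on premultiplied $p^{-e}$-linear maps, together with $\fa_{p^{e'}-1}^{[p^e]}\fa_{p^e-1}\subseteq\fa_{p^{e+e'}-1}$.
\item $I_e^{\cD}(m)=I_e^{\fad}(mL)$ is then a tautology, because only $\cD_{e,-m}$ acts nontrivially to degree $0$ on degree-$m$ elements.
\item $\cD_e\neq 0$ for every $e$ by ampleness, which settles your worry about the semigroup of nonzero degrees.
\end{itemize}
After that, your Steps 2 and 3 go through as in the paper. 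The reduction to homogeneous $c$ needs only that $I_e^{\cD}$ is a homogeneous ideal, not the radical argument you sketch.
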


We will prove this theorem by reducing it to the case of a \emph{Cartier subalgebra} and importing results from \cite{BlickleSchwedeTuckerFSigPairs1}. We begin by showing that in \autoref{def:gFrforgradedsequence}, we may restrict to checking splittings along certain ample divisors. The following operations on maps will be useful:

\begin{dfn} \label{def:globaloperations}
    Given a normal variety $X$ over $k$, we define the following operations on $p^{-e}$-linear maps on $X$:
    \begin{itemize}
       
         \item Given two ideal coherent sheaves $\fa$ and $\fb$, and Weil divisors $D_1$ and $D_2$, and global sections $\varphi$ and $\psi$ of
         $ F^e_*\frak{a} \cdot \sheafhom_{\cO_X}(F^e_*\cO_X(D_1),\cO_X)$ and $ F^f_*\frak{b} \cdot \sheafhom_{\cO_X}(F^f_*\cO_X(D_2),\cO_X)$ respectively, we have the map $\varphi \star \psi \in  F^{e+f} _* (\fa ^{[p^f]} \fb) \cdot( \sheafhom_{\cO_X}(F^{e+f}_*\cO_X(D_2 + p^f D_1),\cO_X) )$ defined as follows: on any affine open subset $U$ of $X$, we pick local sections $g \in \fa$ and $h \in \fb$ such that $\varphi = \tilde{\varphi} ( \Fe g \rule{0.8em}{0.2pt})$ and  $\psi = \tilde{\psi} ( F^f _ * h \rule{0.8em}{0.2pt})$ for maps $\tilde{\varphi} \in \Hom_{\cO_U} ( \Fe \cO_U (D_1) , \cO_U)$ and $\tilde{\psi} \in \Hom_{\cO_U} (F_* ^f \cO_U (D_2) , \cO_U)$, and we define 
         \[ \varphi\star \psi (U) := \tilde{\varphi} \circ F^e_* \big (g\cO_U ( D_1) \otimes_{\cO_U}  \psi  \big )^{**}. \]
        Here $M^{**}$ denotes the reflexification of the module $M$. More concretely, we may first twist the map $\psi$ by the rank one reflexive module $g \cO_U (D_1)$ and reflexify to get a map  
        \[  \big ( g\cO_U ( D_1) \otimes_{\cO_U}   \psi \big )^{**} :  F_* ^{f} \big ( g^{p^f} \,h \, \cO_U (D_2 + p^f D_1) \big ) \to g\cO_U (D_1) . \]
       Then, we may pushforward the above map by $\Fe$ and post-compose with $\varphi$, which is the resulting map
       \[  \varphi \star \psi (U) : F_* ^{e+f} \big (g^{p^f} \, h \, \cO_U (D_2 + p^f D_1) \big) \to \Fe (g\cO_U (D_1)) \to \cO_X. \]
       This construction naturally globalizes to define a global section $\varphi \star \psi $ of the sheaf  $F^{e+f} _* (\fa ^{[p^f]} \fb) \cdot( \sheafhom_{\cO_X}(F^{e+f}_*\cO_X(D_2 + p^f D_1),\cO_X) )$.
       \medskip
       
          \item Given a coherent ideal sheaf $\fa \subset \cO_X$ and a Weil divisor $D$ and a global section $\varphi$ of the sheaf
         $ F^e_*\frak{a} \cdot \sheafhom_{\cO_X}(F^e_*\cO_X(D),\cO_X)$ for some $e \geq 1$, we define $\varphi^{[p^f]}$ as the iteratively as $\varphi ^{[p^{f+1}]} = \varphi \star \varphi^{[p^{f}]} $ (for any integer $f \geq 1$). We note that $\varphi ^{[p^f]}$ is naturally a global section of the sheaf $ F^{ef} _* (\fa ^{1 + p^e + \dots + p^{(f-1)e}}) \cdot( \sheafhom_{\cO_X}(F^e_*\cO_X( (1 + p^e + \dots + p^{(f-1)e}) D),\cO_X) ) $
    \end{itemize}
\end{dfn}

\begin{Pn} \label{prop:testdivisor}
    A pair $(X, \fad)$ as in \autoref{def:gFrforgradedsequence} with $X$ \emph{projective} is globally $F$-regular if and only if one of the following conditions hold:
    \begin{enumerate}
        \item there exists an ample divisor $L$ on $X$ such that for any $r \geq 0$ and any effective divisor $D \sim r L $, there is an $e > 0$ such that $D \notin I_e ^{\fad} (rL)$. 

    \item there exists an ample and effective divisor $D_0 \geq 0$ such that:
    \begin{itemize}
        \item  the vanishing locus $ V(\fa_1)$ of $\fa_1$ is contained in the support of $D_0$,

        \item the variety $X \setminus \Supp (D_0) $ has strongly $F$-regular singularities and

        \item there is an $e_0 > 0$ such that $D_0 \notin I_{e_0} ^{\fad} (\cO_X(D_0))$. 
    \end{itemize}
    \end{enumerate}
\end{Pn}

\begin{lem} \label{lem:testdivisor}
    Let $D$ be a Weil divisor on a normal projective variety $X$ and let $f \in H^0 (X, \cO_X (D))$ be a global section. We have the following properties:
    \begin{enumerate}
       \item If $D'$ is another Weil divisor and $g \in H^0 (X, \cO_X (D'))$ such that $fg \notin I_e ^{\fad} (\cO_X(D + D'))$, then $f \notin I_e ^{\fad} (\cO_X (D))$.

        \item If $f \notin I_{e_0} ^{\fad} (\cO_X(D))$, then $ f^{1 + p + \dots + p^{e-1}}$ is not contained in $I_{e e_0} ^{\fad} ((1 + p + \dots + p^{e-1})D)$ for all $e \geq 1$.

    \end{enumerate}
\end{lem}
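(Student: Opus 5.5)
Both parts come down to manipulating splitting maps with the operations of \autoref{def:globaloperations}. Throughout, recall that $f \notin I_e^{\fad}(\cO_X(D))$ means there is a global section $\varphi$ of $\Fe\fa_{p^e-1}\cdot\sheafhom_{\cO_X}(\Fe\cO_X(D),\cO_X)$ with $\varphi(\Fe f)\neq 0$. Since $H^0(X,\cO_X)=k$ and $X$ is projective, we may rescale so that $\varphi(\Fe f)=1$.

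For part (a), suppose $\varphi$ is a global section of $\Fe\fa_{p^e-1}\cdot\sheafhom(\Fe\cO_X(D+D'),\cO_X)$ with $\varphi(\Fe(fg))\neq 0$.
\begin{enumerate}
\item Precompose $\varphi$ with multiplication by $g$. This gives a map
\[\psi:=\varphi(\Fe g\,\rule{0.8em}{0.2pt}):\Fe\cO_X(D)\to\cO_X,\]
defined as the reflexive extension of the map on the regular locus. The map $\cO_X(D)\xrightarrow{\cdot g}\cO_X(D+D')$ is a map of reflexive sheaves, so this is legitimate.
\item Check that $\psi$ is still a section of $\Fe\fa_{p^e-1}\cdot\sheafhom(\Fe\cO_X(D),\cO_X)$. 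Locally, $\varphi=\sum\tilde\varphi_i(\Fe a_i\,\rule{0.8em}{0.2pt})$ with $a_i\in\fa_{p^e-1}$. Then $\psi=\sum(\tilde\varphi_i\circ g)(\Fe a_i\,\rule{0.8em}{0.2pt})$, so the ideal factor is preserved.
\item Conclude: $\psi(\Fe f)=\varphi(\Fe fg)\neq 0$, hence $f\notin I_e^{\fad}(\cO_X(D))$.
\end{enumerate}

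For part (b), the plan is to use $\varphi^{[p^f]}$, namely the iterated $\star$-powers from \autoref{def:globaloperations}.
\begin{enumerate}
\item Start from $\varphi$ with $\varphi(F^{e_0}_*f)=1$, where $\varphi$ is a section of $F^{e_0}_*\fa_{p^{e_0}-1}\cdot\sheafhom(F^{e_0}_*\cO_X(D),\cO_X)$.
\item Show by induction that the $e$-fold $\star$-composite sends $f^{N}$ to $1$. Here $N=1+p^{e_0}+\dots+p^{(e-1)e_0}$ is the exponent produced by the construction (matching the stated exponent when $e_0=1$; for general $e_0$ one reads it this way). The inductive step is the identity
\[(\varphi\star\psi)\big(F^{e_0+f}_*(f^{p^f}h)\big)=\varphi\big(F^{e_0}_*(f\cdot\psi(F^f_*h))\big),\]
which holds because $\psi$ is $p^{-f}$-linear. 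With $\psi(F_*^f h)=1$ this gives $\varphi(F^{e_0}_*f)=1$. So the composite is $1$ on $f^N$; this is the same factorization argument as in \autoref{postivitylemma}.
\item Verify the ideal condition. By construction, the composite lies in
\[F^{ee_0}_*\Big(\prod_i \fa_{p^{e_0}-1}^{[p^{ie_0}]}\Big)\cdot\sheafhom\big(F^{ee_0}_*\cO_X(ND),\cO_X\big).\]
Each Frobenius bracket power satisfies $\fa^{[q]}\subset\fa^q$. Using the graded property $\fa_a\fa_b\subset\fa_{a+b}$, the product then lands in $\fa_{(p^{e_0}-1)N}$.
\item Conclude from $(p^{e_0}-1)N=p^{ee_0}-1$ that the product lies in $\fa_{p^{ee_0}-1}$. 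Hence the composite is an admissible map for $I_{ee_0}^{\fad}(ND)$ that does not kill $f^N$.
\end{enumerate}

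The main obstacle is the careful local bookkeeping in the definition of $\star$. One has to confirm that twisting by $g\cO_U(D_1)$ and then reflexifying is well defined, independent of the local presentation, and actually produces the claimed ideal factor. I would check this on the regular locus, where the sheaves are invertible, and extend across codimension two using reflexivity and normality.
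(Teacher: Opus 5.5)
Your proposal is correct and follows the paper's proof. Part (a) precomposes a splitting map with multiplication by $g$. Part (b) takes the iterated $\star$-power $\varphi^{[p^e]}$ and uses $\fa_{p^{e_0}-1}^{(p^{ee_0}-1)/(p^{e_0}-1)} \subset \fa_{p^{ee_0}-1}$, with the exponent $N = 1+p^{e_0}+\dots+p^{(e-1)e_0}$ exactly as in the paper's own argument.

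You also need not reinterpret the statement. Since $1+p+\dots+p^{e-1}\le N$, the literal version follows from yours by applying part (a) with $g=f^{N-(1+p+\dots+p^{e-1})}$.
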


\begin{proof} Recall the $\star$ and Frobenius-power operations on maps introduced in \autoref{def:globaloperations}.

     \begin{enumerate}
         \item Let $D_f$ and $D_g$ be the effective divisors defined by $f$ and $g$. Note that we have a natural inclusion $\cO_X (D_f ) \subset \cO_X(D_f + D_g)$ (mapping the section $f$ to $fg$) inducing an inclusion
        \[  \sheafhom_{\cO_X}(F^e_*\cO_X(D_f + D_g),\cO_X)) \subset \sheafhom_{\cO_X}(F^e_*\cO_X(D_f),\cO_X)). \]
        By pre-multiplication by sections of $\Fe \fa_{p^e -1}$, we then have an inclusion
         \[  \Fe \fa_{p^e -1 } \cdot \sheafhom_{\cO_X}(F^e_*\cO_X(D_f + D_g),\cO_X)) \subset \Fe \fa_{p^e -1} \cdot \sheafhom_{\cO_X}(F^e_*\cO_X(D_f),\cO_X)). \]
        Then, we see that any element $\psi \in H^0 (\Fe \fa_{p^e -1 } \cdot \sheafhom_{\cO_X}(F^e_*\cO_X(D_f + D_g),\cO_X)) $ such that $\psi (\Fe 1) = 1$ gives an element $\psi \in H^0 (\Fe \fa_{p^e -1} \cdot \sheafhom_{\cO_X}(F^e_*\cO_X(D_f),\cO_X))$ as required.

        \item Let $D_f $ denote the effective Weil divisor corresponding to $f$ (so that $D_f \sim D$). Since $f \notin I_{e_0} ^{\fad} (\cO_X (D))$, there exists a section $\varphi \in H^0(F^{e_0} _* \fa_{p^{e_0 - 1}} \cdot \sheafhom_{\cO_X}(F _ *  ^{e_0} \cO_X(D_f),\cO_X)) $ that maps the global section $F_* ^{e_0} 1$ to $1$. Then, it follows from the construction of $\varphi^{[p^e]}$ that it defines a map
        \[ \varphi^{[p^e]}: F^{ee_0}_*\cO_X( (1 + p^{e_0} + \dots + p^{(e-1)e_0} ) D_f),\cO_X)   \]
        such that $ \varphi^{[p^e]} (F_* ^{e e_0} 1)  = 1$ (see \cite[Lemma 5.2.3]{BlickleSchwedeSurveyPMinusE} for a similar argument). Moreover, since $\varphi$ is premultiplied by $\fa_{p^{e_0} -1}$, we have $\varphi ^{[p^e]}  $ is premultiplied by sections of $\fa_{p^{e_0} - 1} ^{\frac{p^{ e e_0  -1}}{p^{e_0} -1}} \subset \fa_{p^{e e_0} - 1}$. This proves part (b) of the lemma. \qedhere
    \end{enumerate}
\end{proof}

\begin{proof}[Proof of \autoref{prop:testdivisor}]
The forward direction for both parts follows from the observation that if $(X, \fad)$ is globally $F$-regular, then we know already that $X$ is globally $F$-regular. So we will now prove the converse.
Part (a) of the Proposition follows immediately from Part (a) of \autoref{lem:testdivisor} since for any effective divisor $D$, we may find an $r>0$ and an effective divisor $E$ such that $D + E \sim rL $. 

Now we prove Part (b): We note that since $D_0$ is assumed to be ample, the complement $U = X \setminus \Supp (D_0)$ is an affine variety with strongly $F$-regular singularities. Now, for any effective divisor $D \sim r D_0$ (for some $r >0$), we have a splitting $ \phi: \Fe \cO_U (D|_U) \to \cO_U$ for some $e \gg 0$. We will view $\phi$ as a section of the coherent sheaf $\sheafhom_{\cO_U}(F^e_*\cO_U(D|_U),\cO_U)$ over the open set $U$ and let $t_0$ denote the section corresponding to $D_0$ of $\cO_X (D_0)$. Since $V(\fa_{p^e - 1})   \subset V (\fa_ 1 ^{p^e -1}) =V(\fa_1) \subset \text{Supp}(D_0) $, we have a restriction map of sections:
\[ H^0 (X,    F^e_*\frak{a}_{p^e -1} \cdot \sheafhom_{\cO_X}(F^e_*\cO_X(D),\cO_X) ) \to H^0 (U, \sheafhom_{\cO_U}(F^e_*\cO_U(D|_U),\cO_U)) . \]
Since $U = X \setminus \text{Supp} (D_0)$ and $D_0$ is ample, the section $\phi t_0^n$  extends to $X$ along the above map for some $n \gg 0$ to a global section 
\[ \tilde{\phi}  \in H^0 (X,    F^e_*\frak{a}_{p^e -1} \cdot \sheafhom_{\cO_X}(F^e_*\cO_X(D),\cO_X (nD_0) ) \]
such that the restriction of $\tilde{\phi}$ to $U$ is equal to $\phi$. In particular, the map $\tilde{\phi}$ sends the canonical section $\Fe 1 $ to $1 \in \cO_X (nD_0)$ (defined by the effective divisor $nD_0$). Next, by \autoref{lem:testdivisor} we may choose $e' > 0$ and a map $\psi \in H^0 (X, F_ * ^{e'}  \fa _{p^{e'} - 1} \cdot \sheafhom (F_* ^{e'} \cO_X  (nD_0), \cO_X)$ such that $\psi (F_* ^{e'} 1) =  1$. Then, noting that $\tilde{\phi} (-nD_0)$ can be viewed as a map in $   H^0 (X,    F^e_*\frak{a}_{p^e -1} \cdot \sheafhom_{\cO_X}(F^e_*\cO_X(D-np^e D_0),\cO_X  )$, the map 
\[ \psi \star \tilde{\phi} (-nD_0) \in    F^{e+e'} _* (\fa_{p^{e'} -1} ^{[p^e]} \fa_{p^e -1}) \cdot( \sheafhom_{\cO_X}(F^{e+ e'}_*\cO_X(D),\cO_X) )\]
constructed in \autoref{def:globaloperations} splits $D$ and shows that $D \notin I_{e+e'} ^{\fad} (\cO_X (D))$ since $\fa_{p^{e' -1}} ^{[p^e]} \fa_{p^e -1} \subset \fa_{p^{e+ e'} - 1} $. This completes the proof of the Proposition. \qedhere
\end{proof}

\subsection{Cartier subalgebras over graded rings.}

Let $R$ be a Noetherian $F$-finite reduced ring. For given natural numbers $e,f\in \mathbb{N}$, consider $R$-module homomorphisms $\varphi \in \Hom_R(F^e_*R, R)$ and $\psi \in \Hom_R(F^f_*R, R)$. Now, we construct the star operation $\varphi \star \psi \in \Hom_R(F^{e+f}_* R, R)$ defined as a composed map $\varphi\star \psi := \varphi \circ F^e_*\psi$. Let $\cC _e (R) := \Hom_R(F^e_*R, R)$ and $\cC(R) = \bigoplus_{e\geq 0} \cC_e(R)$. Via the $\star$-opeation, $\cC(R)$ forms a non-commutative graded algebra over $R$.

\begin{dfn}
Let $R$ be a Noetherian $F$-finite reduced ring. The \emph{full Cartier algebra} $\mathcal{C}(R)$ is defined to be the non-commutative graded algebra
\[ \mathcal{C}(R) := \bigoplus_{e\geq 0} \cC_e =  \bigoplus_{e\geq 0} \Hom_R(F^e_*R,R).\]
A \emph{Cartier subalgebra over $R$}, denoted by $\cD\subseteq \cC(R)$ is a graded subalgebra of $\cC$ such that $\cD_0 = R$ (under addition and the $\star$-operation). Under the natural grading induced from $\cC (R)$, we denote the $e^{\text{th}}$-graded piece of $\cD$ by $\cD_e \subset \cC_e $ which is an $\Fe R$-submodule of $\Hom_R(F^e_*R,R)$.
\end{dfn}

The notions of $F$-regularity and the $F$-signature extend to the settings of Cartier subalgebras of an $F$-finite local rings as follows:

\begin{dfn}[\cite{BlickleSchwedeTuckerFSigPairs1}] \label{def:Cartiersubalgebras}
    Let $(R, \fm, k)$ be an $F$-finite local domain and $\cD$ be a Cartier subalgebra over $R$. Let $I_e ^{\cD}$ denote the following ideal of $R$:
\[ I_e^{\cD} = \{ f \in R \mid \varphi(F^e_* f) \in \fm \text{ for all } \varphi \in \cD_e \}. \]
The pair $(R, \cD)$ is said to be $F$-regular if for each non-zero element $c \in R$, there exists an $e \gg 0$ such that $c \notin I_e ^\cD$.
Next, assume that $\cD_e \neq 0$ for all $e \gg 0$. Then, the $F$-signature of the pair $(R, \cD)$ is defined as
\[ \s(R, \cD) := \lim_{e \to \infty } \frac{\ell (R/I_e ^\cD)}{p^{ed}} .\]
Here $\ell$ denotes the length as an $R$-module and $d$ denotes the dimension of $R$.
\end{dfn}

Moreover, we also have:
\begin{thm}{\cite[Theorems 3.11, 3.18]{BlickleSchwedeTuckerFSigPairs1}} Let $(R, \fm, k)$ be an $F$-finite local domain and $\cD$ be a Cartier subalgebra over $R$ such that $\cD_e \neq 0$ for all $e \gg 0$. Then, the $F$-signature of $(R, \cD)$ exists and it is positive if and only if the pair $(R, \cD)$ is $F$-regular.
    
\end{thm}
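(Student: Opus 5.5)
This result is imported from \cite{BlickleSchwedeTuckerFSigPairs1}. If I were to reprove it, I would follow the strategy already used for \autoref{MainthmA}: verify the hypotheses of \autoref{Lem.LimitExists} for $a_e=\ell(R/I_e^{\cD})$, with $d+1$ replaced by $d$.

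\textbf{Existence.} First, the $I_e^{\cD}$ are ideals of $R$. The ratio $a_e/p^{ed}$ is bounded because $\fm^{[p^e]}\subseteq I_e^{\cD}$. For the recursive inequality, the issue is that $\cD_1$ may vanish. Since $\cD$ is closed under $\star$ and $\cD_e\neq 0$ for $e\gg0$, I would fix $e_0$ with $\cD_{e_0}\neq 0$ and pass to the subsequence of multiples of $e_0$. Along that subsequence I would compare $I_{e+e_0}^{\cD}$ with $I_e^{\cD}$ through an injection $F^{e_0}_*R\hookrightarrow R^{\oplus p^{e_0 d}}$. This injection is generically an isomorphism and its cokernel has dimension $<d$, as in \autoref{lem:injectivemap}. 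Combining the $\star$-products $\varphi\star\psi$ for $\varphi\in\cD_e$ and $\psi\in\cD_{e_0}$ with this injection should give
\[ p^{e_0 d}\,\ell(R/I_e^{\cD}) \le \ell(R/I_{e+e_0}^{\cD}) + C p^{e(d-1)}. \]
This is the local analogue of \autoref{Lem.LimitComparison}, and \autoref{Lem.LimitExists} then yields the limit along the subsequence. To pass to the full limit I would control $a_{e+1}$ against $a_e$ by a bounded factor up to $O(p^{e(d-1)})$. I expect this comparison, made without a degree-one element of $\cD$, to be the main obstacle. BST handle it through the structure of the semigroup of $e$ with $\cD_e\neq0$, and I would try that approach first.

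\textbf{Positivity from $F$-regularity.} Assume $(R,\cD)$ is $F$-regular. I would choose $c\neq 0$ (a test element) and $e_1$ with $c\notin I_{e_1}^{\cD}$, witnessed by some $\psi\in\cD_{e_1}$ with $\psi(F^{e_1}_*c)\notin\fm$. If $f\notin (\fm^{[p^e]}:c)$, then some $\varphi\in\cD_e$ applied to $F^e_*(cf)$ is nonzero modulo $\fm$, and composing with $\psi$ shows $f\notin I_{e+e_1}^{\cD}$. This gives
\[ I_{e+e_1}^{\cD}\subseteq (\fm^{[p^e]}:c). \]
The normalized length of $R/(\fm^{[p^e]}:c)$ is bounded below by a positive constant for a domain, and this bound gives $\s(R,\cD)>0$.

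\textbf{The converse.} If $(R,\cD)$ is not $F$-regular, the set of $c$ lying in every $I_e^{\cD}$ contains a nonzero ideal $J$. Then $J+\fm^{[p^e]}\subseteq I_e^{\cD}$, so $\ell(R/I_e^{\cD})\le \ell\bigl(R/(J+\fm^{[p^e]})\bigr)=O(p^{e(d-1)})$, because $\dim R/J<d$. Hence the $F$-signature vanishes.
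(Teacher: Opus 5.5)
The paper gives no argument for this statement; it is quoted from \cite{BlickleSchwedeTuckerFSigPairs1}. Judged on its own terms, your existence argument is sound once two points are made explicit, and your converse (non-$F$-regular implies $\s=0$) is correct. The positivity direction, however, rests on a false step.

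\textbf{Existence.} Your comparison needs the components $\iota_1,\dots,\iota_N$ of $\iota\colon F^{e_0}_*R\hookrightarrow R^{\oplus N}$, $N=p^{e_0d}$, to lie in $\cD_{e_0}$. This is possible. Write $K=\operatorname{Frac}R$. Then $\cD_{e_0}$ is a nonzero $F^{e_0}_*R$-submodule of the rank-one module $\cC_{e_0}$, so it is generically all of $\Hom_K(F^{e_0}_*K,K)$.

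With such components, $\varphi\star\iota_i\in\cD_{e+e_0}$ for every $\varphi\in\cD_e$. This forces $\iota(F^{e_0}_*I^{\cD}_{e+e_0})\subseteq(I^{\cD}_e)^{\oplus N}$. Your inequality follows, with error term $\ell(Q/I^{\cD}_eQ)=O(p^{e(d-1)})$, where $Q=\operatorname{coker}\iota$.

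The step you leave open is easy, and it does not need a comparison of $a_{e+1}$ with $a_e$ (which could not identify limits anyway). Put $b_e=\ell(R/I^{\cD}_e)/p^{ed}$. The bound $b_e\le b_{e+e'}+C_{e'}p^{-e}$ holds for \emph{every} $e'$ with $\cD_{e'}\neq0$, in particular for both $e_0$ and $e_0+1$. The first gives a limit $L_j$ along each residue class $j$ mod $e_0$. The second gives $L_0\le L_1\le\dots\le L_{e_0-1}\le L_0$, so all the $L_j$ agree.

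\textbf{Positivity.} Your claim ``if $f\notin(\fm^{[p^e]}:c)$ then some $\varphi\in\cD_e$ has $\varphi(\Fe(cf))\notin\fm$'' amounts to $cR\cap I^{\cD}_e\subseteq\fm^{[p^e]}$. This is false in general, even for regular $R$.

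For example, take $R=k[[x]]$ with $p$ odd and $\cD_e=\Fe\bigl(x^{(p^e-1)/2}\bigr)\cdot\cC_e$, the $F$-regular pair $(R,x^{1/2})$. Then $I^{\cD}_e=(x^{(p^e+1)/2})$. For any fixed $c$ and $e\gg0$, the element $c\,x^{(p^e+1)/2}$ lies in $I^{\cD}_e$ but not in $(x^{p^e})$. Your composition step is fine after rescaling $\psi$ by a unit, but it receives no valid input. So the containment $I^{\cD}_{e+e_1}\subseteq(\fm^{[p^e]}:c)$ is left unproved.

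What is missing is an element $c$, independent of $e$, that measures how far $F^e_*R$ is from being $\cD$-free. Your own $\iota$ provides one:
\begin{enumerate}
\item Set $\iota^{(n)}=\iota^{\oplus N^{n-1}}\circ F^{e_0}_*\iota^{(n-1)}\colon F^{ne_0}_*R\hookrightarrow R^{\oplus N^n}$. Its components $\iota_i\star\iota^{(n-1)}_j$ lie in $\cD_{ne_0}$.
\item Let $c_0\neq0$ kill $Q$. Then $\operatorname{coker}\iota^{(n)}$ is an extension of $Q^{\oplus N^{n-1}}$ by a quotient of $F^{e_0}_*\operatorname{coker}\iota^{(n-1)}$. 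Since $c_0$ kills $F^{e_0}_*M$ whenever $c_0^2$ kills $M$, induction shows that $c:=c_0^2$ kills every $\operatorname{coker}\iota^{(n)}$.
\item By $F$-regularity, choose $\psi\in\cD_{e_1}$ with $\psi(F^{e_1}_*c)=1$. Choose $y_j$ with $\iota^{(n)}(y_j)=c\,\epsilon_j$.
\item Then $\psi\star\iota^{(n)}_i\in\cD_{e_1+ne_0}$ and $(\psi\star\iota^{(n)}_i)(F^{e_1}_*y_j)=\delta_{ij}$. Hence $F^{e_1+ne_0}_*(R/I^{\cD}_{e_1+ne_0})$ surjects onto $k^{\oplus N^n}$.
\end{enumerate}
It follows that $\ell(R/I^{\cD}_{e_1+ne_0})\ge p^{ne_0d}$ for all $n$ (with $k$ perfect, as the normalization by $p^{ed}$ presumes). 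Combined with existence of the limit, this gives $\s(R,\cD)\ge p^{-e_1d}>0$.
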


We will utilize this theorem by relating pairs $(X, \fad)$ where $X$ is a projective variety with an ample divisor $L$, with a suitable Cartier subalgebra over the section ring $S(X, L)$.

First we discuss the appropriate bi-grading on the full Cartier algebra of an $\NN$-graded ring $S$. Fix an integer $e \geq 0$. Since $S$ is an $\NN$-graded ring, we have that $\Fe S$ is naturally an $\frac{1}{p^e} \NN$-graded ring such that the map $F^e$ is a degree-preserving map. Moreover, for any $\ZZ$-graded $S$-module $N$, the module $ \Hom _S (\Fe S , N)$ is $\frac{1}{p^e}\ZZ$-graded with a decomposition given as
\begin{equation} \label{decompositionofHom} \Hom _S (\Fe S , N) =\bigoplus_{\frac{n}{p^e}\in \frac{1}{p^e}\ZZ}\Hom_S^{\text{gr}}\left(F^e_*S\left[\frac{-n}{p^e}\right], N\right) \end{equation}
where $F^e_*S\left[\frac{-n}{p^e}\right]$ is the module whose $\frac{m}{p^e}$ degree component (for any $m \in \ZZ$) is $(F^e_*S)_{\frac{m-n}{p^e}}$. Note that by definition, given a homogeneous element $f \in S_m$ and a homogeneous map $\phi \in \Hom_S^{\text{gr}}\left(F^e_*S\left[\frac{-n}{p^e}\right], N\right) $, the image $\phi (\Fe f) $ has degree $k$ if $m + n = k p^e $ (for some $k \in \ZZ$) and is equal to $0$ is if $m \not \equiv n \bmod p^e$.

\begin{dfn} \label{dfn:lambdagrading}
    Let $\frac{1}{p^\infty} \ZZ$ denote the additive group consisting of fractions of the form $\frac{n}{p^e}$ for some $e \geq 0$ and $n \in \ZZ$. We set $\Lambda : = \NN \times \frac{1}{p^\infty} \ZZ $. Then, by the above discussion, the full Cartier algebra of an $\NN$-graded ring $S$ is naturally a $\Lambda$-graded non-commutative algebra over $S$, which we denote as
    \[ \cC(S) = \bigoplus_{e \geq 0} \bigoplus _{\frac{n}{p^e} \in \frac{1}{p^e}\ZZ}\Hom_S^{\text{gr}}\left(F^e_*S\left[\frac{-n}{p^e}\right], S \right)  .\] We will consider $\Lambda$-graded Cartier subalgebras of $\cD \subset \cC$ with $\cD_0 = S$. Note that for such a subalgebra, each $e^{\text{th}}$-graded component $\cD_e$ is a $\frac{1}{p^e} \ZZ$-graded $\Fe S$-submodule of  $\Hom_S (\Fe S, S)$ and decomposes as
    \[  \cD_e = \bigoplus _{\frac{n}{p^e} \in \frac{1}{p^e}\ZZ} \Big ( \cD_{e,n} \subset \Hom_S^{\text{gr}}\left(F^e_*S\left[\frac{-n}{p^e}\right], S \right) \Big)  .\]
\end{dfn}

\begin{eg} \label{examplesoflambdagraded} There are several natural $\Lambda$-graded Cartier subalgebras that arise over an $\NN$-graded ring $S$.
    \begin{enumerate}
        \item Suppose $S$ is normal and $\Delta$ is an effective $\QQ$-divisor over $Y : = \Spec(S)$. Assume that $\Delta$ is \emph{homogeneous}, which means that the prime divisors occuring as components of the support of $\Delta$ are all defined by homogeneous prime ideals in $S$. Then, the Cartier subalgebra over $S$ associated to the pair $(Y, \Delta)$, denoted by $\cC(S, \Delta)$ is naturally $\Lambda$-graded as follows:
        \[ \cC(S, \Delta)_{e} = \Hom_S (\Fe ( S(\lceil (p^e -1) \Delta \rceil) ), S) =  \bigoplus _{\frac{n}{p^e} \in \frac{1}{p^e}\ZZ} \Hom_S^{\text{gr}}\left(F^e_*S (\lceil (p^e -1) \Delta \rceil) \, \left[\frac{-n}{p^e}\right], S \right). \]

        \item Continuing the previous example, suppose moreover that the $\NN$-graded ring $S$ is a section ring of a normal projective variety $X$ over $k$ with respect to an ample line bundle $\cL$. Let $\Delta$ be an effective $\QQ$-divisor over $X$ and $\tilde{\Delta}$ denote the cone over $\Delta$ with respect to $\cL$. Thus, $\tilde{\Delta}$ is a homogeneous $\QQ$-divisor on the cone $Y = \Spec (S(X, \cL))$. Then, the graded pieces of $\cC(S, \tilde{\Delta})$ can be understood as (see \autoref{lem.SectionRingVSCoherentSheaf})
        \[ \cC(S, \tilde{\Delta})_{e,n} = \Hom_S^{\text{gr}}\left(F^e_*S (\lceil (p^e -1) \tilde{\Delta} \rceil) \, \left[\frac{-n}{p^e}\right], S \right) =  \Hom_{\cO_X}
        (\Fe \cO_X (\lceil (p^ e-1) \Delta \rceil), \cL^n).  \]

        In particular, by taking $\Delta = 0$, we see that the full Cartier algebra $\cC (S)$ naturally consists of graded pieces 
        \[ \cC (S) _{e,n} = \Hom_{\cO_X} (\Fe \cO_X, \cL^n) = \Hom_{\cO_X} (\Fe \cL^{-np^e}, \cO_X) .  \]
        Moreover, under this identification of homogeneous maps, the $\star$-operation on $\cC(S)$ applied to homogeneous maps  matches with the global $\star$-operation on maps defined in \autoref{def:globaloperations}.
        
        \item Next, suppose $\fa$ is a homogeneous ($\NN$-graded) ideal of $S$. Then, $\Fe \fa$ is naturally $\frac{1}{p^e} \NN$-graded ideal of $\Fe  S$. Then, $ \Fe \fa \cdot \Hom_S (\Fe S, S) $ is a $\frac{1}{p^e} \ZZ$-graded $\Fe S$-submodule of  $\Hom_S (\Fe S, S)$ which we may compute as follows:
\begin{equation}\label{eqn.decomposition} \begin{split} F^e_* \fa \cdot \Hom_{S}(F^e_*S ,S)& = (F^e_* \fa) \cdot\bigoplus_{\frac{n}{p^e}\in \frac{1}{p^e}\ZZ}\Hom_S \left(F^e_*S, S\right)_{\frac{n}{p^e}} \\ & = \bigoplus _{\frac{m}{p^e} \in \frac{1}{p^e}\ZZ} \quad \sum _{ \ell\in \Z} (F^e_*\frak{a})_{\frac{\ell}{p^e}} \cdot \Hom_{S} (F^e_* S , S) _{\frac{m-\ell}{p^e}}.
\end{split}
\end{equation}
Thus, putting this together with \autoref{decompositionofHom}, we see that the graded components of $(\Fe \fa) \cdot \Hom_S (\Fe S, S)$ are given by
\begin{equation} \label{eqn:decompositioofhomwithideal} \left( (\Fe \fa) \cdot \Hom_S (\Fe S, S) \right)_{\frac{m}{p^e}}  = \sum _{ \ell\in \Z} (F^e_*\frak{a})_{\frac{\ell}{p^e}} \cdot \Hom_{S} ^{\mathrm{gr}}\left(F^e_* S \left[ \frac{-m+ \ell}{p^e} \right] , S\right).\end{equation}

Thus, given a graded sequence of homogeneous ideals $\fad$ over $S$ (or more generally, an $F$-graded sequence), there is a naturally $\Lambda$-graded Cartier subalgebra $\cC(S, \fad)$ defined as follows:
\[ \cC(S, \fad)_{e} = \left( (\Fe \fa_{p^e -1}) \cdot \Hom_S (\Fe S, S) \right)  \]
where the further decomposition of $\cC(S, \fad)_e = \bigoplus_{\frac{m}{p^e} \in \frac{1}{p^e} \ZZ } \,  \left( (\Fe \fa_{p^e -1}) \cdot \Hom_S (\Fe S, S) \right)_{\frac{m}{p^e}} $ can be computed using \autoref{eqn:decompositioofhomwithideal}.
    \end{enumerate}
\end{eg}

Given an $\NN$-graded ring $(S, \fm, k)$ (with $S_0 = k$), and a $\Lambda$-graded Cartier subalgebra $\cD$ over $S$, the ideals $I_e^\cD$ are also naturally $\NN$-graded with the following graded components:
\[ I_e ^\cD (m) = \{ f \in S_m \mid \varphi(F^e_*f) \in \fm \text{ for all homogeneous } \varphi \in \cD_{e} \}  . \]
Moreover, we have the following simpler description: 

\begin{lem} \label{lem:simplifiedIegraded}
    With notation as above, for any $e \geq 0$ and $m \geq 0$, we have 
    \[ I_e ^\cD (m) = \left\{f\in S_m \mid \varphi(F^e_*f) = 0 \text{ for all } \varphi \in \cD_{e,-m}  \right\} \]
\end{lem}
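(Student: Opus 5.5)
The plan is to compare the two conditions one homogeneous component of $\cD_e$ at a time. Fix $f \in S_m$ and a homogeneous $\varphi \in \cD_{e,n}$, with $\cD_{e,n} \subset \Hom_S^{\text{gr}}(F^e_*S[\frac{-n}{p^e}], S)$. By the degree bookkeeping after \autoref{decompositionofHom}, $\varphi(F^e_* f)$ is zero unless $m+n = kp^e$ for some integer $k$, and in that case it is homogeneous of degree $k$. Since $S$ is $\NN$-graded with $S_0 = k$, we have $\fm = \bigoplus_{j>0} S_j$. So a homogeneous element lies in $\fm$ exactly when it is zero or has positive degree, and it can fail to lie in $\fm$ only when it is a nonzero element of degree $0$.

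For the inclusion ``$\supseteq$'', let $f$ satisfy $\varphi(F^e_* f) = 0$ for all $\varphi \in \cD_{e,-m}$, and take any homogeneous $\varphi \in \cD_{e,n}$. If $m+n \not\equiv 0 \bmod p^e$, the image is zero. If $m + n = kp^e$ with $k \neq 0$, the image is homogeneous of degree $k$. When $k>0$ it lies in $\fm$. When $k<0$ it is zero, because $S$ has no components of negative degree. The only remaining case is $k = 0$, that is $n = -m$, and there the image is zero by hypothesis. Hence $f \in I_e^\cD(m)$.

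For the inclusion ``$\subseteq$'', let $f \in I_e^\cD(m)$ and $\varphi \in \cD_{e,-m}$. Then $\varphi(F^e_* f)$ has degree $0$, so it lies in $S_0 = k$, and by assumption it also lies in $\fm$. Since $\fm \cap S_0 = 0$, it vanishes.

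One point needs checking in the definition of $I_e^\cD$: testing only homogeneous $\varphi$ must be the same as testing all $\varphi \in \cD_e$. This holds because $\cD_e$ is $\frac{1}{p^e}\ZZ$-graded, so every element of $\cD_e$ is a finite sum of homogeneous elements of $\cD_e$, and $\fm$ is closed under sums. No step here is a real obstacle. The only care needed is to keep the degree shift conventions of \autoref{decompositionofHom} straight, so that the component forced to land in degree $0$ is exactly $\cD_{e,-m}$.
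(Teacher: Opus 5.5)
Your proof is correct and follows the same degree-bookkeeping argument as the paper. For $f\in S_m$ and homogeneous $\varphi\in\cD_{e,n}$, the image $\varphi(F^e_*f)$ vanishes unless $m+n=kp^e$ with $k\geq 0$, lies in $\fm$ when $k>0$, and only the component $n=-m$ can produce a nonzero element of $S_0=k$; your condition $m+n\not\equiv 0 \bmod p^e$ is the correct form of the paper's ``$m\not\equiv n$''.
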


\begin{proof}
Note that the $\subseteq$ is straightforward since the right-hand side takes care of the homomorphisms of degree $\frac{-m}{p^e}$. For the converse direction, suppose $f \in S_m$ and $\varphi(F^e_*f) = 0 \text{ for all } \varphi \in \cD_{e,-m}$. Then for any homogeneous $\phi \in \cD_{e,n}$, we have that $\phi (\Fe f) $ is $0$ is $m \not \equiv n \bmod p^e $ or if $ n < -m$. But if $m + n = k p^e $ for some $k > 0$, then $\phi (\Fe f)$ has degree $k>0$ and hence already lies in $\fm$. Therefore, we see that $\phi (\Fe f) \in \fm$ in any case and therefore for all $\phi \in \cD_e$, which shows that $f \in I_e ^\cD$.
\end{proof}


\subsection{Cartier subalgebra associated to a graded sequence.}
In this subsection, we will complete the proof of \autoref{thm:fsigofgradedsequences}. For this, we will associate to a graded sequence of ideal sheaves, an appropriate Cartier subalgebra over the section ring:

\begin{dfn} \label{def:Cartiersubalgebraassociatedtogradedsequence}
    Let $X$ be a normal projective variety and $\cL$ be an ample line bundle on $X$. Let $S=S(X,\cL)$ denote the section ring of $X$ with respect to $\cL$. Then, given a graded sequence of coherent ideal sheaves $\fad$ on $X$ (\autoref{gradedsequencedfn}), we define the associated Cartier subalgebra $\cD(S, \fad) = \cD(X, \cL, \fad)$ over $S$ as:
    \[ \cD(X, \cL, \fad)_e = \bigoplus _{\frac{m}{p^e} \in \frac{1}{p^e} \ZZ} \Big ( \cD_{e,m} :=  H^0 \big( X, \Fe \fa_{p^e-1} \cdot \sheafhom_{\cO_X}(F^e_*\cL^{-m},\cO_X)  \big)  \Big).  \]
    It follows from the compatibility of the global $\star$-operation defined in \autoref{def:globaloperations} and the $\star$-operation on $\cC(S)$ (see \autoref{examplesoflambdagraded}, (b)) that $\cD(S, \fad)$ is indeed a Cartier subalgebra over $S$.
    
\end{dfn}

The key observation to prove \autoref{thm:fsigofgradedsequences} is the following lemma relating the $I_e$-ideals:

\begin{lem} \label{lem:relatingIeofgradedsequenceandCartiersubalgebra}
  Let $X$ be a normal projective variety and $\cL$ be an ample line bundle on $X$ and $S=S(X,\cL)$ denote the section ring of $X$ with respect to $\cL$. Suppose we have a graded sequence of coherent ideal sheaves $\fad$ on $X$ (\autoref{gradedsequencedfn}) and the associated Cartier subalgebra $\cD(S, \fad) = \cD(X, \cL, \fad)$ over $S$ (\autoref{def:Cartiersubalgebraassociatedtogradedsequence}). Then, for all $e \geq 1$ and $m \geq 0$, we have
  \[ I_e ^{\fad} (\cL^m) = I_e ^\cD (m).  \]
  Here $I_e ^{\fad} (\cL^m) $ is calculated over $X$ as in \autoref{def:Iebullet} and $I_e ^\cD$ is calculated over $S$ as in \autoref{lem:simplifiedIegraded}.
\end{lem}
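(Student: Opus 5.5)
The plan is to compare the two subspaces of $H^0(X,\cL^m)=S_m$ degree by degree, using \autoref{lem:simplifiedIegraded} to reduce the Cartier-algebra side to the single graded piece $\cD_{e,-m}$. By that lemma,
\[ I_e^\cD(m) = \{ f\in S_m \mid \varphi(F^e_* f)=0 \text{ for all } \varphi\in \cD_{e,-m}\}, \]
where by \autoref{def:Cartiersubalgebraassociatedtogradedsequence}
\[ \cD_{e,-m} = H^0\big(X, F^e_*\fa_{p^e-1}\cdot \sheafhom_{\cO_X}(F^e_*\cL^{m},\cO_X)\big). \]
On the other side, \autoref{def:Iebullet} with $D$ a divisor of $\cL^m$ gives
\[ I_e^{\fad}(\cL^m) = \{ f\in H^0(\cL^m) \mid \varphi(F^e_*f)=0 \text{ for all } \varphi\in H^0(F^e_*\fa_{p^e-1}\cdot \sheafhom(F^e_*\cL^m,\cO_X))\}. \]
The two families of test maps are therefore literally the same global sections. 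Proving the lemma thus amounts to checking that the evaluation $\varphi(F^e_*f)$ means the same thing in both settings.

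\textbf{Step 1: identify the Hom-sheaf.} I would write down the identification of \autoref{examplesoflambdagraded}(b) explicitly. A global map $\varphi\colon F^e_*\cL^m\to\cO_X$ corresponds, via \autoref{lem.SectionRingVSCoherentSheaf}(c) after twisting by $\cL^{n}$ for all $n$, to the graded $S$-linear map
\[ F^e_*S\left[\frac{m}{p^e}\right]\to S, \qquad F^e_* s\mapsto \varphi\otimes \cL^{n}(F^e_* s) \quad (s\in S_{np^e-m}). \]
On the degree $m$ piece $S_m$ (that is, $n=0$), this map is exactly $f\mapsto \varphi(F^e_*f)\in H^0(\cO_X)=S_0=k$. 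So for $f\in S_m$, the value computed on $X$ agrees with the value computed on $S$.

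\textbf{Step 2: check that the ideal factor matches.} Here I would verify that premultiplying by sections of $F^e_*\fa_{p^e-1}$ on $X$ corresponds to the submodule used in the definition of $\cD_{e,-m}$. This holds by the definition of $\cD$, which takes global sections of the product sheaf, so no comparison with a homogeneous ideal of $S$ is needed. Both sides are then defined as the annihilating condition for the same $k$-space of maps.

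\textbf{Step 3: conclude.} With Steps 1 and 2 in place, $f\in I_e^\cD(m)$ if and only if $\varphi(F^e_*f)=0$ for all $\varphi$ in the common space of maps, which is the condition $f\in I_e^{\fad}(\cL^m)$.

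The main obstacle, and the point needing care, is the bookkeeping of the grading: confirming that $\cD_{e,-m}$ consists precisely of the maps of degree $-m/p^e$ that send $S_m$ into $S_0$. I would also check that the relevant $\star$-compatibility and twisting conventions in \autoref{def:Cartiersubalgebraassociatedtogradedsequence} use $\cL^{-(-m)}=\cL^m$ with the correct sign. Once this sign convention is pinned down, the equality is tautological.
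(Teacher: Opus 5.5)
Your proposal is correct and follows essentially the same route as the paper: use \autoref{lem:simplifiedIegraded} to reduce to the graded piece $\cD_{e,-m}$, which is literally the space of test maps in \autoref{def:Iebullet}. Then note that the corresponding homogeneous map on $S$ is the cone over $\varphi$, so its value on $f\in S_m$ is $\varphi(F^e_*f)\in S_0=k$. One small slip in Step 1: the twist $\varphi\otimes\cL^{n}$ is a map $F^e_*\cL^{m+np^e}\to\cL^{n}$, so it acts on $s\in S_{np^e+m}$, not $S_{np^e-m}$, and only with this correction does $n=0$ recover the map $S_m\to S_0$ you use.
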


\begin{proof} 
Note that $S_m = H^0 (X, \cL^m)$ so both  $I_e ^{\fad} (\cL^m)$ and $ I_e ^\cD (m)$ are subspaces of $S_m$. By \autoref{lem:simplifiedIegraded}, a homogeneous element $f \in S_m$ is contained in $I_e ^\cD (m)$ if and only if $ \varphi (\Fe f) = 0 $ for all $\varphi \in \cD_{e, -m} = H^0 \big( X, \Fe \fa_{p^e-1} \cdot \sheafhom_{\cO_X}(F^e_*\cL^{m},\cO_X)  \big)  \Big) $. But given a map $\varphi \in \Hom _{\cO_X}( \Fe \cL^m ,  \cO_X)$, the corresponding map $\tilde{\varphi}$ in the Cartier algebra of $S$ is just the cone over the map $\varphi$. Therefore, we have $\varphi (\Fe (1_f)) = 0$ if and only $\tilde{\varphi}(f) = 0 $, where we view $ 1_f $ the canonical global section of $\cL^m$ defined by $f \in S_m$. This proves the lemma.
\end{proof}

\begin{proof}[Proof of \autoref{thm:fsigofgradedsequences}]
    We set $\cD$ to be the Cartier subalgebra over $S$ associated to $(X, \cL, \fad)$ as in \autoref{def:Cartiersubalgebraassociatedtogradedsequence}.
    
    By the definition of the $F$-signature $\s_X(\fa_\bullet ,L)$ (\autoref{def:fsigofidealsequence}) and \autoref{lem:relatingIeofgradedsequenceandCartiersubalgebra}, we have the equivalent limit 
    \[ \s_X(\fad,L) = \lim_{e \to\infty} \frac{1}{p^{e(d+1)}}\sum_{r=0}^\infty \dim \frac{S_r}{I^{\cD} _e (r)} \]
    which exists by \cite[Theorem 3.11]{BlickleSchwedeTuckerFSigPairs1}. Note that for any $e \geq 1$, the space $\cD_{e,n} = H^0 \big( X, \Fe \fa_{p^e-1} \cdot \sheafhom_{\cO_X}(F^e_*\cL^{-n},\cO_X)  \big) \neq 0 $ for all $n \gg 0$ by the ampleness of $\cL$. $\cD_e \neq 0$ for all $ e \geq 0$.

    Next, by \cite[Theorem 3.18]{BlickleSchwedeTuckerFSigPairs1} the positivity of the $F$-signature $\s (S, \cD)$ is equivalent to the $F$-regularity of the pair $(S, \cD)$ (see \autoref{def:Cartiersubalgebras}). Therefore, it suffices to check that the $F$-regularity of the pair $(S, \cD)$ is equivalent to the global $F$-regularity of the pair $(X, \fad)$. Suppose $(S, \cD)$ is $F$-regular, which by definition means that for each non-zero element $c \in S$, there is some $e \gg 0$ such that $c \notin I_e ^\cD$. Applying this to homogeneous elements of $S$ and using \autoref{lem:relatingIeofgradedsequenceandCartiersubalgebra}, we see that for any divisor $D \sim rL$ (for some $r >0$), there exists an $e \gg 0$ such that $D \notin I_e ^{\fad} (rL)$. By \autoref{prop:testdivisor}, this implies that the pair $(X, \fad)$ is globally $F$-regular. Conversely, suppose $(X, \fad)$ is globally $F$-regular but $(S, \cD)$ is not $F$-regular. Then, there exists a non-zero element $c \in S$ such that $c \in I_e ^\cD$ for all $e \geq 1$. But since the ideals $I_e ^\cD$ are homogeneous, this implies that if $c = \sum c_r$ is the homogeneous decomposition of $c$, then each $c_r \in I_e ^\cD$ for all $e \geq 1$. But again, using \autoref{lem:relatingIeofgradedsequenceandCartiersubalgebra}, and the global $F$-regularity of $(X, \fad)$, we know that $c_r \notin I_e ^\cD$ for $ e \gg 0$. This is a contradiction and shows that $(S, \cD)$ is $F$-regular. Therefore, by \cite[Theorem 3.18]{BlickleSchwedeTuckerFSigPairs1}, the pair $(X, \fad)$ is globally $F$-regular if and only if the $F$-signature $\s_X (\fad, L)$ is positive. This completes the proof of \autoref{thm:fsigofgradedsequences}.
\end{proof}



\begin{dfn}
    Let $X$ be a projective variety and $\cL$ be an ample line bundle on $X$. Let $S=S(X,\cL)$ denote the section ring of $X$ with respect to $\cL$. Then, given a graded sequence of coherent ideal sheaves $\fad$ on $X$,
    the \emph{cone over $\fad$} is a graded sequence of ideals $\tfad$ of $S$ defined by
\[  \tilde{\fa}_n = M(\fa_n, L) := \bigoplus _{r \geq 0} H^0 (X, \fa_n \otimes L^r). \]
We note that since $\fa_m \cdot \fa_n \subset \fa_{m+n} $, the cones $\tfad$ are automatically a graded sequence since the condition can be checked on homogeneous elements and the image of the map $H^0 (X, \fa_m \otimes L^r) \otimes H^0 (X, \fa_n \otimes L^s) \to H^0 (X, L^{r + s} ) $ lies in $H^0 (X, \fa_{m+n} \otimes L^{r+s})$.
\end{dfn}

\section{Remarks about pairs and triples.} \label{section:pairsandtriples}
\subsection{Remarks about the $(X, \Delta)$ case.}

Let $X$ be a normal projective variety over $k$. Let $\Delta$ be an effective $\bQ$-Weil divisor on $X$. For any Weil divisor $D$ on $X$ and $e\geq 1$, define the $k$-vector subspace $I^\Delta_e(D)$ of $H^0
(X, \cO_X(D))$ as follows:
\[I_e^\Delta(D) = \{ s \in H^0(D) \mid \varphi(F^e_* s) = 0 \text{ for all } \varphi \in \Hom_{\cO_X}(F^e_*\cO_X(\lceil (p^e-1)\Delta \rceil +D), \cO_X)\}.\]
Now, let $L$ be a big divisor on $X$. We define the $F$-signature of a pair

\begin{dfn}\label{def:FsigPair}
Let $X$ be a $d$-dimensional normal variety over $k$. Let $L$ be a big divisor on $X$. The \emph{$F$-signature function of a pair $\s_X(\Delta,L)$} is \[\s_X(\Delta, L) = \lim_{e\to\infty}\frac{1}{p^{e(d+1)}}\sum_{m=0}^\infty\dim_k \frac{H^0(mL)}{I_e^\Delta(mL)}\]
\end{dfn}

\begin{thm}
Let $X$ a normal projective variety and $L$ be a big Cartier divisor. Then,
\begin{enumerate}
    \item The $F$-signature of a pair $s_X(\Delta,  L)$ exists.
    \item (Scaling property) For any $n\in \mathbb{N}$, $s_X(\Delta, nL) = \frac{1}{n} s_X(\Delta,  L)$.
    \item The function $L \mapsto s_X(\Delta, L)$ is locally Lipschitz continuous on the big cone.
\end{enumerate}
\end{thm}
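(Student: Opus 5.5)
The plan is to rerun the arguments of \autoref{section:Fsigofbig} with every $I_e$ replaced by $I_e^\Delta$, checking at each step that the only new ingredient, the twist by $\lceil (p^e-1)\Delta\rceil$, costs at most $O(p^{e(d-1)})$ in each degree $m$. First I would record the basic facts about $I^\Delta_e$. There is a containment $I_e(D)\subseteq I_e^\Delta(D)$, since maps from $F^e_*\cO_X(\lceil(p^e-1)\Delta\rceil+D)$ restrict to maps from $F^e_*\cO_X(D)$. Monotonicity $I^\Delta_e(D)\subseteq I^\Delta_e(D+E)$ for effective $E$ holds by the proof of \autoref{lem:Ieeffective}. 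The vanishing statement $I^\Delta_e(mL)=H^0(mL)$ for $m>Cp^e/\|L\|$ then follows from \autoref{Thm:DegenSpaceEqualsWholeSpace} together with the containment. Hence the defining sums are finite, and as in part (a) of the proof of \autoref{thm:Fsigbigexists} the normalized sums $a^\Delta_e/p^{e(d+1)}$ are bounded by $\vol(L)$ via \autoref{lem:sumvol}.

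For existence in (a), I would apply \autoref{Lem.LimitExists}, which requires the analogues of \autoref{Lem.LimitComparison} and \autoref{Pn:twistedFsig}. For the comparison lemma, I would use the maps $\iota_{e,m}$ from \autoref{lem:injectivemap} but twist by $\lceil (p^{e+1}-1)\Delta\rceil$. The problem is that $\lceil(p^{e+1}-1)\Delta\rceil$ is not $p\lceil(p^e-1)\Delta\rceil$. I would handle this by replacing $D$ by $D+\lceil\Delta\rceil$ (or $D+B$ for a fixed Cartier $B\ge\lceil\Delta\rceil$). The key inequality is $\lceil(p^{e+1}-1)\Delta\rceil \le p\lceil(p^e-1)\Delta\rceil+p\lceil\Delta\rceil$. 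With it, any splitting downstairs composes to give a $\Delta$-splitting upstairs, so the image of $I^\Delta_{e+1}(mL)$ lands in $I^\Delta_e$. The cokernel estimate \autoref{Eqn.CokernelComputation} is unchanged.

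The twisted estimate \autoref{Pn:twistedFsig} is, I expect, the main obstacle. Its proof uses the duality \autoref{Lem.PandeDuality}, and here the duality is already stated for pairs. So I would run the same argument with $J_e=H^0(mL)\cap I^\Delta_e(mL+D)$ and dual divisors $(1-p^e)K_X-\lceil(p^e-1)\Delta\rceil-mL$. The restriction to a normal Bertini divisor $E$ now also involves $-\lceil(p^e-1)\Delta\rceil|_E$. Since this is anti-effective once $E$ avoids the components of $\Delta$, it only decreases $h^0$, and the bound $O(p^{e(d-1)})$ survives. Once both lemmas are in place, the existence computation is verbatim.

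For (b), the scaling argument of \autoref{Pn:TransformationRuleFsig} uses only the $\Delta$-version of \autoref{Pn:twistedFsig} and the vanishing bound, so it carries over directly. For (c), I would repeat \autoref{thm:FsigLocallyLipschitz}. The perturbation estimate \autoref{lem.BoundsForFsigBigDiv} comes from counting arguments that use only monotonicity, vanishing, and volume estimates. The upper bound \autoref{Prop:signaturevolumecomparison} follows from $\s_X(\Delta,L)\le\s_X(L)$, which in turn follows from $I_e\subseteq I_e^\Delta$. These inputs then feed into \autoref{lem:LipschitzContinuousExtension}, \autoref{Pn:LocallyLipschitzwithTransformationRule} and \autoref{Thm:LipschitzAwayFromZero}. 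The rest is routine bookkeeping.
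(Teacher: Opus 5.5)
Your proposal is correct and follows the paper's route. The paper only asserts that the arguments of \autoref{section:Fsigofbig} (principally \autoref{Pn:twistedFsig}, \autoref{Pn:TransformationRuleFsig} and the Lipschitz machinery) adapt directly to $I_e^\Delta$, and your sketch supplies details it leaves out, such as absorbing the rounding discrepancy $\lceil (p^{e+1}-1)\Delta\rceil \le p\lceil (p^e-1)\Delta\rceil + p\lceil \Delta\rceil$ into the target divisor of $\iota_j$ and running \autoref{Lem.PandeDuality} for the pair $(X, \Delta + \frac{1}{p^e-1}D)$.

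One point in (c) should be made explicit: \autoref{Prop:signaturevolumecomparison} and \autoref{thm:FsigLocallyLipschitz} assume $X$ is globally $F$-regular, which is what lets $\s_X(\Delta,\cdot)$ descend to numerical classes. The remaining case is trivial, because then $\s_X(\Delta,L)\le \s_X(L)=0$ by \autoref{thm:positivityofFsigbig}.
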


\begin{proof}
The key ingredients in establishing the first two properties are \autoref{Pn:twistedFsig} and \autoref{Pn:TransformationRuleFsig}. The arguments adapt directly to $I_e^\Delta$ since the essential part of the proof is comparing the dimensions of the spaces. As a consequence, $\s_X(\Delta,L)$ is locally Lipschitz continuous on the big cone of $X$.
\end{proof}

Now, we define the Frobenius $\alpha$-invariant of a divisor pair. 
\begin{dfn} \label{alphae}
 For each integer $e \geq 1$, we define
\[ m_{e} (X,\Delta , L) : = \max \{ m \geq 0 \, | \, I_{e}^\Delta (mL)  = 0 \}, \]
where $I_e^\Delta(L)$ is the subspace defined as above. We also define
\[ \alpha_{e}(X,\Delta, L) := \frac{m_e (X,\Delta, L)}{p^e} .\] The \emph{$\alpha_F$-invariant of pairs} is $\alpha(X,\Delta,L)= \lim_{e\to\infty} \alpha_e(X,\Delta,L)$. 
\end{dfn}

\begin{Pn}\label{Pn:AlphaInvPairsPropositions}
Let $X$ be a normal projective variety and $L$ be an integral big divisor on $X$. Let $\Delta$ be an effective $\bQ$-divisor on $X$. Then,
\begin{enumerate}
\item the $\alpha_F$-invariant $\alpha(X,\Delta,L)$ exists.
\item for any $n\in \mathbb{N}$, $\alpha_F(X,\Delta, nL) = \frac{1}{n}\alpha_F(X,\Delta, L)$.

\end{enumerate}

\end{Pn}
\begin{proof}
The first part is similar to  \autoref{lem:AlphaInvExists} together with \cite[Lemma 4.5]{PandeFrobeniusVersionTiansAlphaInvariant} to deal with the difference of rounding. The second follows by an analogous argument using \autoref{Prop:transformationforalpha}, since the proof relies on the index $m_e$ that makes $I_e^\Delta$ trivial and the linear spaces of big divisors $|mL|$ for $m\gg0$.
\end{proof}
If $L$ is a big $\bQ$-Cartier $\bQ$-divisor, there is a natural number $n\in\mathbb{N}$ such that $nL$ is an integral big divisor. In this case, we define $\alpha_F(X,\Delta, L):= n\alpha_F (X,\Delta, nL)$. Note that $\alpha_F(X,\Delta, L)$ is independent of the choice of $n$.

\begin{Pn}
Let $X$ be a normal projective variety and $L$ be a big $\bQ$-Cartier divisor on $X$. Let $\Delta$ be an effective $\bQ$-divisor on $X$. Then, \begin{enumerate}
\item for any effective $\bQ$-divisor $E$, $\alpha_F(X, \Delta, L) \geq \alpha_F(X, \Delta, L+E)$.
\item the alpha invariant is locally Lipschitz continuous on the big cone.
\end{enumerate}
\end{Pn}
\begin{proof}
For the first part, let $n$ and $m$ be the Cartier indices of $L$ and $E$, respectively, and set $L' = mnL$ and $E' = mnE$. Since $I_e^\Delta(L') \subseteq I_e^\Delta(L' + E')$, it follows that $m_e(X, \Delta, L') \geq m_e(X, \Delta, L' + E')$. Together with \autoref{Pn:AlphaInvPairsPropositions} and the definition of $\alpha_F$, the desired inequality follows. For the second part, recall that the continuity of the non-pair version of $\alpha_F$ relies on \autoref{Prop:addingeffective} and \autoref{lem:upperboundonalpha}. The first part above provides the paired analogue of \autoref{Prop:addingeffective}. Moreover, by definition, we have $\alpha_F(X, \Delta, L) \leq \alpha_F(X, L)$, so $\alpha_F(X, \Delta, L)$ satisfies the same upper bound as in \autoref{lem:upperboundonalpha}. Therefore, by running the same argument as in \autoref{Prop:alphacontinuity}, we obtain the continuity of the alpha invariant of the pair.
\end{proof}

\subsection{Remarks about triples $(X,\Delta, \frak{a}_\bullet)$}\label{subsec:triples}  In this subsection, we combine the theory of pairs and graded sequences. Let $X$ be a normal projective variety over $k$. A \emph{triple} $(X, \Delta, \mathfrak{a}_{\bullet})$ consists of the variety $X$, an effective $\mathbb{Q}$-divisor $\Delta$, and a graded sequence of ideal sheaves $\mathfrak{a}_{\bullet}$ (see \autoref{gradedsequencedfn}). We generalize the definition of the $I_e$ space to this setting. For any Weil divisor $D$ on $X$ and integer $e \geq 1$, we define the subspace $I_e^{\Delta, \mathfrak{a}_{\bullet}}(D)$ of $H^0(X, \cO_X(D))$ as follows:
\[
I_e^{\Delta, \mathfrak{a}_{\bullet}}(D) = \left\{ s \in H^0(D) \;\middle|\; \varphi(F^e_* s) = 0 \text{ for all } \varphi \in H^0(\mathcal{F}_{e,\Delta, \mathfrak{a}_{\bullet}}(D)) \right\}
\]
where $\mathcal{F}_{e,\Delta, \mathfrak{a}_{\bullet}}(D)$ denotes the sheaf of splitting maps compatible with the triple:
\[
\mathcal{F}_{e,\Delta, \mathfrak{a}_{\bullet}}(D) := \mathfrak{a}_{p^e-1} \cdot \mathscr{H}om_{\cO_X} \big( F^e_* \cO_X(\lceil (p^e-1)\Delta \rceil + D), \cO_X \big) .
\]
We say that the triple $(X,\Delta,\mathfrak{a}_\bullet)$ is
\emph{globally $F$-regular} if, for every effective Weil divisor
$D$ on $X$, there exists an integer $e>0$ such that the canonical
section $1\in H^0(X,\cO_X(D))$ does not belong to
$I_e^{\Delta,\mathfrak{a}_\bullet}(D)$.
\begin{dfn} \label{def:FsigTriple}
Let $X$ be a $d$-dimensional normal projective variety. Let $(X, \Delta, \mathfrak{a}_{\bullet})$ be a triple as above and $L$ be a big divisor on $X$. We define the \emph{F-signature function of the triple}, denoted $s_X(\Delta, \mathfrak{a}_{\bullet}, L)$, as:
\[
s_X(\Delta, \mathfrak{a}_{\bullet}, L) = \lim_{e \to \infty} \frac{1}{p^{e(d+1)}} \sum_{m=0}^{\infty} \dim_k \frac{H^0(mL)}{I_e^{\Delta, \mathfrak{a}_{\bullet}}(mL)}.
\]
\end{dfn}

\begin{thm}
Let $X$ a normal projective variety and $L$ be a big Cartier divisor. Let $\frak{a}_\bullet$ be a graded sequence of ideal sheaves on $X$. Then,
\begin{enumerate}
    \item The $F$-signature of triple $s_X(\Delta, \mathfrak{a}_{\bullet}, L)$ exists.
    \item (Scaling property) For any $n\in \mathbb{N}$, $s_X(\Delta, \mathfrak{a}_{\bullet}, nL) = \frac{1}{n} s_X(\Delta, \mathfrak{a}_{\bullet}, L)$.
    \item The function $L \mapsto s_X(\Delta, \mathfrak{a}_{\bullet}, L)$ is locally Lipschitz continuous on the big cone.
\end{enumerate}
\end{thm}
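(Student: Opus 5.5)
The plan is to rerun the arguments of \autoref{section:Fsigofbig} for the subspaces $I_e^{\Delta,\fad}(mL)$. The point is that every estimate there uses only three structural features of the spaces $I_e(mL)$, and we check each for the triple version.

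\textbf{Step 1: the three structural features.}
\begin{enumerate}
\item \emph{Monotonicity under effective twists:} $I_e^{\Delta,\fad}(D)\subseteq I_e^{\Delta,\fad}(D+E)$ for $E\ge 0$, the analogue of \autoref{lem:Ieeffective}. This holds because a map defined on the larger sheaf restricts to the smaller one, and the premultiplication by $\fa_{p^e-1}$ is unaffected.
\item \emph{Compatibility with the degree-lowering maps:} a section of $I_{e+1}^{\Delta,\fad}(mL)$ maps into $I_e(D+\lfloor m/p\rfloor L)$ under $\iota_{e,m}$. This needs the upper-index bookkeeping $\lceil (p^{e+1}-1)\Delta\rceil \ge p\lceil (p^e-1)\Delta\rceil$ up to a fixed effective error, and $\fa_{p^e-1}^{[p]}\fa_{p-1}\subseteq \fa_{p^{e+1}-1}$. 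The bounded error can be absorbed into the divisor $D$ of \autoref{lem:injectivemap}, after twisting by a fixed effective divisor.
\item \emph{Vanishing for large degree:} $I_e^{\Delta,\fad}(mL)=H^0(mL)$ for $m>Cp^e$. This follows from \autoref{Thm:DegenSpaceEqualsWholeSpace}, since $I_e(mL)\subseteq I_e^{\Delta,\fad}(mL)$ (fewer maps are allowed).
\end{enumerate}

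\textbf{Step 2: existence of the limit (part (a)).} We need the analogue of \autoref{Pn:twistedFsig}, a perturbation bound $Cp^{e(d-1)}$ for twisting by a fixed Cartier $D$.
\begin{itemize}
\item The term $\dim H^0(mL+D)/H^0(mL)$ is bounded exactly as before.
\item For $\dim J_e/I_e$ with $J_e = H^0(mL)\cap I_e^{\Delta,\fad}(mL+D)$, the original proof used the duality \autoref{Lem.PandeDuality}. That duality is unavailable with the ideal $\fa_{p^e-1}$ present. Instead, I would argue directly. Pick a general normal very ample $E\ge D$ as in the proof, and set $\Delta_e=\Delta+\frac1{p^e-1}E$.
\item Then $J_e/I_e$ injects into the cokernel of the restriction
\[ H^0\big(F^e_*\fa_{p^e-1}\sheafhom(F^e_*\cO(\lceil(p^e-1)\Delta\rceil+mL),\cO)\big) \to H^0\big(F^e_*\fa_{p^e-1}\sheafhom(F^e_*\cO(\lceil(p^e-1)\Delta\rceil+mL+E),\cO)\big), \]
dualized. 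This cokernel is controlled by sections on the divisor $E$ of a sheaf of rank $p^{e(d-1)}$-scale. I would bound it through the twisted Grothendieck-dual sheaf $F^e_*\cO((1-p^e)K_X-\cdots)$ restricted to $E$, giving $O(p^{e(d-1)})$ uniformly in $m\le Cp^e$.
\end{itemize}
With this bound in hand, \autoref{Lem.LimitComparison} and the summation in the proof of \autoref{thm:Fsigbigexists} go through verbatim, and \autoref{Lem.LimitExists} gives the limit.

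\textbf{Step 3: scaling (part (b)).} This is identical to \autoref{Pn:TransformationRuleFsig}: split $m$ by residues mod $n$ and apply the Step 2 perturbation bound with $D=iL$.

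\textbf{Step 4: local Lipschitz continuity (part (c)).} By \autoref{lem:LipschitzContinuousExtension} and \autoref{Pn:LocallyLipschitzwithTransformationRule}, it suffices to bound differences in effective directions, via the analogue of \autoref{lem.BoundsForFsigBigDiv}. That lemma's proof uses only the monotonicity of feature (1), the volume comparisons, and the upper bound \autoref{Prop:signaturevolumecomparison}. The upper bound transfers because $s_X(\Delta,\fad,L)\le$ the sum of $h^0(mL)$ over $m\le Cp^e$. The rest of the proof of \autoref{thm:FsigLocallyLipschitz} then applies verbatim.

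\textbf{Main obstacle.} I expect the hardest step to be the perturbation estimate in Step 2, since the duality trick used for $\dim J_e/I_e$ is lost once the ideal $\fa_{p^e-1}$ enters. If the direct restriction-to-$E$ bound is hard to make uniform, a fallback for ample $L$ is the Cartier subalgebra route of \autoref{section:gradedsequences}, combined with \cite{BlickleSchwedeTuckerFSigPairs1}. The big case would then be reduced to it by sandwiching $L$ between ample classes using feature (1).
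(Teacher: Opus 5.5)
Your route is the paper's own. Its proof just asserts that the arguments of \autoref{section:Fsigofbig} (notably \autoref{Pn:twistedFsig} and \autoref{Pn:TransformationRuleFsig}) carry over. Your Step~1 bookkeeping, namely $\fa_{p^e-1}^{[p]}\fa_{p-1}\subseteq\fa_{p^{e+1}-1}$ with $\lceil (p-1)\Delta\rceil$ absorbed into the source of $\iota_j$, is exactly what the degree-lowering step needs.

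\textbf{The gap is Step~2}, on which both your (a) and (b) rest. You rightly note that \autoref{Lem.PandeDuality} is lost, but the replacement bound is only asserted, and the mechanism you describe ignores precisely the ideal. For an effective twist $E$ with equation $s_E$, put $B'=(1-p^e)K_X-\lceil (p^e-1)\Delta\rceil-mL$. By \autoref{lem:multiplicationandtracemap}, $I_e^{\Delta,\fad}(mL)$ and $J_e=H^0(mL)\cap I_e^{\Delta,\fad}(mL+E)$ are the annihilators, under the trace pairing, of $W=H^0(\fa_{p^e-1}\mathcal{O}_X(B'))$ and $W'=s_E\cdot H^0(\fa_{p^e-1}\mathcal{O}_X(B'-E))$. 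Hence $\dim J_e/I_e^{\Delta,\fad}(mL)\le\dim W/W'$. (Note that the restriction map runs from the $mL+E$ space into the $mL$ space, not the other way.)

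Only the quotient $W/\big(W\cap s_E H^0(\mathcal{O}_X(B'-E))\big)$ embeds into $H^0(\mathcal{O}_X(B'))/s_EH^0(\mathcal{O}_X(B'-E))\subseteq H^0(E,\mathcal{O}_E(B'_E))$. That is the part the dual sheaf restricted to $E$ controls. The other piece, $\big(W\cap s_EH^0(\mathcal{O}_X(B'-E))\big)/W'$, injects into the global sections of the kernel of $s_E\colon\mathcal{O}_X(B'-E)/\fa_{p^e-1}\mathcal{O}_X(B'-E)\to\mathcal{O}_X(B')/\fa_{p^e-1}\mathcal{O}_X(B')$. This kernel vanishes exactly when $E$ avoids the associated points of $\mathcal{O}_X(B')/\fa_{p^e-1}\mathcal{O}_X(B')$. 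Those points move with $e$, so a single $E$ chosen as in \autoref{Pn:twistedFsig} cannot be assumed to avoid them all: there are countably many conditions, and $k$ may be countable. Thus the uniform $O(p^{e(d-1)})$ bound is not established. The paper's one-line proof does not address this point either.

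\textbf{Repairs.} One option is to let $E=E_e$ depend on $e$, general in $|H|$, so that $s_{E_e}$ is a nonzerodivisor on $\mathcal{O}_X(B')/\fa_{p^e-1}\mathcal{O}_X(B')$; its associated points do not depend on $m$, since $L$ is Cartier. You would then need to check that the bound on $E$ is uniform over the linear system. It is simpler to bypass the perturbation estimate altogether, using only your three features. Write $a_e=\sum_{m\ge 0}\dim_k H^0(mL)/I_e^{\Delta,\fad}(mL)$, and $a_e(nL)$ for the same sum with $nL$.
\begin{itemize}
\item \emph{Part (a).} Since $L$ is big, compose your modified $\iota_j$ with $\mathcal{O}_X(D)\hookrightarrow\mathcal{O}_X(c_0L)$ for a fixed $c_0$ with $c_0L-D$ effective; the cokernel is still supported in dimension at most $d-1$. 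Then \autoref{Lem.LimitComparison} compares $p^d\dim_k H^0(kL)/I_e^{\Delta,\fad}(kL)$, with $k=c_0+\lfloor m/p\rfloor$, directly with $\dim_k H^0(mL)/I_{e+1}^{\Delta,\fad}(mL)$. Summing over $m$ gives $p^{d+1}\big(a_e-\sum_{k<c_0}h^0(kL)\big)\le a_{e+1}+O(p^{ed+1})$, which is all \autoref{Lem.LimitExists} needs.
\item \emph{Part (b).} Choose $r_0$ with $H^0(rL)\neq 0$ for all $r\ge r_0$. Feature (1) then gives $\dim I_e^{\Delta,\fad}(kL)\le\dim I_e^{\Delta,\fad}((k+r)L)$ and $h^0(kL)\le h^0((k+r)L)$ for all $r\ge r_0$. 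Sandwiching $nq+i$, $0\le i<n$, between $n(q-c')$ and $n(q+c)$ for fixed $c,c'$ yields $|a_e-n\,a_e(nL)|=O(p^{ed})$, hence the scaling.
\item \emph{Part (c).} This then goes through as in your Step~4, since the effective-direction estimate uses only monotonicity, scaling, the vanishing threshold and volume bounds.
\end{itemize}
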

\begin{proof}
The proof is identical to the pair version when $\frak{a}_\bullet$ is trivial. Hence, we omit the proof.
\end{proof}

We may also define the Frobenius alpha invariant for triples.

\begin{dfn}
For a triple $(X, \Delta, \mathfrak{a}_{\bullet})$ and a big divisor $L$, we define:
\[ m_e(X, \Delta, \mathfrak{a}_{\bullet}, L) := \max \{ m \ge 0 \mid I_e^{\Delta, \mathfrak{a}_{\bullet}}(mL) = 0 \}. \]
The \emph{$\alpha_F$-invariant of the triple} is defined as $\alpha_F(X, \Delta, \mathfrak{a}_{\bullet}, L) := \lim_{e \to \infty} \frac{m_e(X, \Delta, \mathfrak{a}_{\bullet}, L)}{p^e}$.
\end{dfn}

Finally, we list analogous properties for alpha invariant of triples:

\begin{Pn}\label{Pn:AlphaInvPairsPropositions}
Let $(X,\Delta,\frak{a}_\bullet) $ be a triple and $L$ be an integral big divisor on $X$. Then,
\begin{enumerate}
\item the $\alpha_F$-invariant $\alpha(X,\Delta,\frak{a}_\bullet,L)$ exists.
\item for any $n\in \mathbb{N}$, $\alpha_F(X,\Delta,\frak{a}_\bullet, nL) = \frac{1}{n}\alpha_F(X,\Delta,\frak{a}_\bullet, L)$.

\end{enumerate}

\end{Pn}

If $L$ is a big $\bQ$-Cartier $\bQ$-divisor, there is a natural number $n\in\mathbb{N}$ such that $nL$ is an integral big divisor. In this case, we define $\alpha_F(X,\Delta,\frak{a}_\bullet, L):= n\alpha_F (X,\Delta,\frak{a}_\bullet, nL)$. Note that $\alpha_F(X,\Delta, L)$ is independent of the choice of $n$.

\begin{Pn}
Let $(X,\Delta, \frak{a}_\bullet)$ be a triple and $L$ be a big $\bQ$-Cartier divisor on $X$. Then, \begin{enumerate}
\item for any effective $\bQ$-divisor $E$, $\alpha_F(X,\Delta,\frak{a}_\bullet, L) \geq \alpha_F(X,\Delta,\frak{a}_\bullet, L+E)$.
\item the alpha invariant is locally Lipschitz continuous on the big cone.
\end{enumerate}
\end{Pn}

\section{Transformation rules for the $F$-signature along proper birational maps} \label{section:transformationrule}

In this section, we explore how the $F$-signature functions transform under proper birational maps. This provides a geometric interpretation for the value of the $F$-signature function $\s_X(L)$ for a \emph{big and semi-ample} line bundle $L$ on $X$ in terms of the contraction defined by (multiples of) $L$. Recall that the $F$-signature function of big and nef divisors was defined by continuous extension from the ample cone, or equivalently by \autoref{Dfn.FsigBigDivisors}.

\begin{thm}\label{Thm:transformationRuleProperBirational}
    Let $X$ be a globally $F$-regular projective variety and $L$ be a big and semi-ample divisor over $X$. Suppose $\pi: X \to Y$ is the birational contraction defined by $|mL|$ for some $ m \gg 0$ sufficiently divisible and $L_0$ denote the ample divisor on $Y$ such that $\pi ^* L_0 \sim L$. Then, we have
    \begin{enumerate}
        \item Suppose $K_Y $ is $\QQ$-Cartier and $K_{X|Y} = K_X - \pi^* K_Y \leq 0$. Then,  $\s_X (L) = \s_Y (L_0) $.

        \item Suppose $K_Y $ is $\QQ$-Cartier and we have $K_{X|Y} = E \geq 0$ for some effective $\QQ$-divisor $E$. Then, we have a natural graded sequence of ideal sheaves over $Y$ defined by
        \[ \fa_m := \pi_* \cO_X (\lfloor -mE \rfloor ).  \]
        Moreover, we have the following transformation rule for the $F$-signature:
        \[ \s_X (L) = \s_Y (\fad, L_0). \]
        
        \item For any effective $\QQ$-divisor $\Delta_X $ such that $(X, \Delta_X)$ is globally $F$-regular and $-K_X - \Delta_X  \sim _\QQ L$, we have:
        \begin{itemize}
            \item setting $\Delta_Y = \pi_* \Delta_X $, the pair $(Y, \Delta_Y)$ is globally $F$-regular.

            \item the divisor $- K_Y - \Delta_Y$ is $\QQ$-Cartier and $ - K_Y - \Delta_Y \sim _\QQ L_0 $.

            \item We have 
            \[ \s_X (\Delta_X, L) = \s_Y (\Delta_Y, L_0).\]
        \end{itemize}
        
    \end{enumerate}
Here $\s_Y(\fad,L_0)$ is the $F$-signature defined in \autoref{def:fsigofidealsequence}, while the $F$-signatures
of divisor pairs are defined in \autoref{def:FsigPair}.

\end{thm}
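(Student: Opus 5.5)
The strategy is to compare the splitting subspaces $I_e(mL)\subset H^0(X,\cO_X(mL))$ on $X$ with the corresponding subspaces on $Y$ degree by degree, using the identification $H^0(X,\cO_X(mL)) = H^0(Y,\cO_Y(mL_0))$. This identification holds because $\pi$ is a birational contraction with $\pi_*\cO_X=\cO_Y$ and $\pi^*L_0\sim L$, by the projection formula. By \autoref{Pn:TransformationRuleFsig} we may replace $L$ by a multiple and assume $L=\pi^*L_0$ with $L_0$ Cartier. All three statements then reduce to an equality of subspaces for every $e$ and $m$:
\[ I_e(mL) = I_e^{\mathcal{G}}(mL_0), \]
where $\mathcal{G}$ is the appropriate $Y$-side datum: nothing in (a), $\fad$ in (b), and $\Delta_Y$ in (c). Once these equalities hold, the defining sums in \autoref{Dfn.FsigBigDivisors}, \autoref{def:fsigofidealsequence} and \autoref{def:FsigPair} agree term by term, and the limits coincide.

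The key step is a duality computation for $p^{-e}$-linear maps. By Grothendieck duality for Frobenius,
\[ \Hom_{\cO_X}(F^e_*\cO_X(mL),\cO_X) = H^0\big(X, F^e_*\cO_X((1-p^e)K_X - mL)\big), \]
and similarly on $Y$. A map on $X$ pushes forward along $\pi$ to a map on $Y$, since $\pi_*\cO_X=\cO_Y$ and $\pi$ is an isomorphism off the exceptional locus. Conversely, a map on $Y$ corresponds to a section of $(1-p^e)K_Y - mL_0$. Pulling this section back and comparing with $(1-p^e)K_X - mL$ introduces the discrepancy $(1-p^e)K_{X|Y}$. The sections of $\cO_X((1-p^e)K_X-mL)$ pushed forward to $Y$ are exactly the sections of $\cO_Y((1-p^e)K_Y-mL_0)$ that satisfy the extra vanishing condition imposed by $-(p^e-1)K_{X|Y}$ along the exceptional divisors.

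I then treat the three cases as follows.

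(a) If $K_{X|Y}\le 0$, the condition $(p^e-1)(-K_{X|Y})\ge 0$ imposes no vanishing. Hence every map on $Y$ lifts to $X$, and the two families of maps coincide under $\pi_*$. Evaluation at $F^e_*f$ commutes with this correspondence, so $I_e(mL)=I_e(mL_0)$.

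(b) If $K_{X|Y}=E\ge 0$, the maps on $Y$ that lift are exactly those premultiplied by $\pi_*\cO_X(\lfloor -(p^e-1)E\rfloor)=\fa_{p^e-1}$. To check that this is precisely $H^0(F^e_*\fa_{p^e-1}\cdot\sheafhom(F^e_*\cO_Y(mL_0),\cO_Y))$, I would check it locally, where $\sheafhom(F^e_*\cO_Y,\cO_Y)$ is generated by a trace-type map, and match generators. Note that $\fad$ is graded since $\lfloor -aE\rfloor+\lfloor -bE\rfloor\le\lfloor-(a+b)E\rfloor$. Then \autoref{lem:relatingIeofgradedsequenceandCartiersubalgebra} identifies the resulting subspaces with those defining $\s_Y(\fad,L_0)$.

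(c) Since $L$ is semi-ample and $-K_X-\Delta_X\sim_\QQ \pi^*L_0$, it follows that $K_X+\Delta_X$ is $\QQ$-linearly trivial over $Y$. Pushing forward gives $-K_Y-\Delta_Y\sim_\QQ L_0$, so this divisor is $\QQ$-Cartier, and $K_X+\Delta_X=\pi^*(K_Y+\Delta_Y)$ is crepant. The same duality comparison, now with $\lceil (p^e-1)\Delta\rceil$ twists, gives $I_e^{\Delta_X}(mL)=I_e^{\Delta_Y}(mL_0)$. Global $F$-regularity of $(Y,\Delta_Y)$ then follows by pushing forward splittings: for effective $D$ on $Y$, split $\pi^*D$ (or its strict transform plus exceptional parts) on $X$ and apply $\pi_*$. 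Alternatively, it follows from positivity of $\s_Y(\Delta_Y,L_0)=\s_X(\Delta_X,L)>0$ together with a pair version of \autoref{thm:positivityofFsigbig}.

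The main obstacle is the round-down bookkeeping in (b) and (c). When $E$ has fractional coefficients, the exceptional twist $(1-p^e)K_{X|Y}$ must be compared with $\lfloor -(p^e-1)E\rfloor$, respectively with $\lceil (p^e-1)\Delta\rceil$, with exactly matching roundings; otherwise the subspaces agree only up to bounded error. I would first try to prove exact equality by working on the smooth locus and reflexifying. If exactness fails, I would fall back on \autoref{Pn:twistedFsig}-type estimates: a bounded discrepancy of divisors changes $\dim H^0/I_e$ by at most $O(p^{e(d-1)})$, which does not affect the limit.
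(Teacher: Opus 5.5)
Your proposal is correct and follows essentially the same route as the paper, which also proves the degreewise equality $I_e^{\Delta_X}(\pi^*D)=\pi^*\big(I_e^{\Delta_Y,\fad}(D)\big)$ by combining Frobenius duality, compatibility of the trace map with $\pi_*$, and the projection formula $\pi_*\cO_X((1-p^e)K_X-\pi^*D)=\fa_{p^e-1}\cdot\cO_Y((1-p^e)K_Y-D)$. Like your local check via a generating trace map, this step needs $(p^e-1)K_Y$ to be Cartier for the relevant $e$; the paper then specializes to $\Delta=0$ for (a) and (b). For (c) the paper goes through a corollary that uses the negativity lemma and Schwede--Smith descent of global $F$-regularity, whereas you push forward $-K_X-\Delta_X\sim_\QQ\pi^*L_0$ to get crepancy directly, and your pushed-forward splittings are exactly that descent argument.
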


\begin{eg}
We provide an example of a smooth blow-up for \autoref{Thm:transformationRuleProperBirational}. In fact, it provides an asymptotic generalization of \cite[Proposition 2.1]{LakshmibaiFrobSplittingsandBlowups}. Let $X$ be a smooth globally $F$-regular projective variety and let $Z\subset X$ be a smooth subvariety of codimension $d\geq 2$. 
Consider the blow-up $\pi:Y = \mathrm{Bl}_Z(X) \to X $ along $Z$. Let $E$ be the exceptional divisor. Then, the relative canonical $K_{Y|X} = (d-1)E$ is effective. Let $\frak{a}_m = \pi_*\cO_Y(\lfloor -m(d-1)E\rfloor)$. Then, for any ample divisor $L$ on $X$, we have $\s_Y(\pi^*L) = \s_X(\frak{a}_\bullet, L)$ by \autoref{Thm:transformationRuleProperBirational}.

\end{eg}

This theorem is a consequence of the compatibility of the trace-map of the Frobenius under proper and birational maps. We begin by reviewing the relevant facts about the trace map.


\subsection{The trace map.} Let $X$ be a normal projective variety over $k$. Then, for each $e \geq 1$, we have a canonically defined trace map:
\[ \text{tr}^e : \Fe (\cO_X((1-p^e)K_X))) \to \cO_X.  \]
Let $\Tr^e$ denotes the map on the global sections induced by $\tr$. Moreover, given an effective $\QQ$-divisor $\Delta$ over $X$, let $\text{tr} ^e _\Delta$ denote the natural composition map
\[ \Fe \Big (\cO_X  \big ((1-p^e)K_X) - \lceil (p^e  -1 ) \Delta \rceil \big )  \Big ) \to  \Fe \big( \cO_X((1-p^e)K_X) \big) \to  \cO_X \]
 and $\text{Tr} ^e _{\Delta} $ the map induced by $\text{tr}^e _{\Delta}$ on the global sections.

The map $\Tr^e$ satisfies the following useful property: for any effective Weil divisor $D \geq 0$, the map $\cO_X \to \Fe (\cO_X(D))$ splits if and only if there exists an effective divisor $D' \sim (1-p^e)K_X - D$ such that $\Tr^e (\Fe  (D+D')) \neq 0.$
Note that we may think of $D+D'$ as a global section of the sheaf $\cO_X((1-p^e)K_X)$ since both $D$ and $D'$ are effective. More generally, we have:

\begin{lem} \label{lem:multiplicationandtracemap}
    Let $X$ be a normal projective variety, $\Delta$ an effective $\QQ$-divisor and $\fad $ a graded sequence of ideal sheaves over $X$ (\autoref{gradedsequencedfn}). Then, for any $e \geq 1$ and any Weil-divisor $D $, let $\cF_{e, \Delta, \fad} (D)$ denote the sheaf $ \fa_{p^e  -1 } \cdot (\cO_X( (1-p^e)K_X -  \lceil (p^e - 1) \Delta \rceil )  -D )$ over $X$. Then,  
    the subspace $I_e ^{\Delta, \fad} (D)$ can be described as follows:
    \[ I_e ^{\Delta, \fad} (D) = \{ f \in H^0 (\cO_X (D)) \, | \, \text{Tr} ^e _\Delta (\Fe (fg)) = 0 \text{ for all } g \in H^0 (\cF_{e, \Delta, \fad} (D)) \}. \]
\end{lem}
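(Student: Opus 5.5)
The plan is to reduce the description of $I_e^{\Delta,\fad}(D)$ to Grothendieck duality for the finite map $F^e$, using the standard isomorphism
\[ \sheafhom_{\cO_X}\big(\Fe \cO_X(M), \cO_X\big) \isom \Fe \cO_X\big((1-p^e)K_X - M\big) \]
for any Weil divisor $M$ on the normal variety $X$ (as in \cite[Section 4.1]{SchwedeSmithLogFanoVsGloballyFRegular}). Under this isomorphism, a local section $g$ of $\Fe\cO_X((1-p^e)K_X - M)$ corresponds to the map $\Fe f \mapsto \tr(\Fe(fg))$. Here $fg$ is the reflexified product, which is a section of $\Fe\cO_X((1-p^e)K_X)$. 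First I will check this on the regular locus, where all sheaves are invertible and the statement is the usual description of $\Hom(\Fe \cO_X,\cO_X)\isom \Fe\omega_X^{1-p^e}$ generated by the trace. It then extends over all of $X$ because both sides are reflexive and the regular locus has complement of codimension at least two.

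Next I apply this with $M = \lceil (p^e-1)\Delta\rceil + D$. This identifies $\sheafhom(\Fe\cO_X(\lceil (p^e-1)\Delta\rceil + D),\cO_X)$ with $\Fe\cO_X((1-p^e)K_X - \lceil (p^e-1)\Delta\rceil - D)$. The map given by a section $g$ is $\Fe f\mapsto \mathrm{tr}^e_\Delta(\Fe(fg))$. To see that it is $\mathrm{tr}^e_\Delta$ rather than $\tr$, recall that $\mathrm{tr}^e_\Delta$ is by definition $\tr$ precomposed with the inclusion coming from $\lceil (p^e-1)\Delta\rceil\ge 0$.

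The ideal is handled next. Multiplying by $\Fe\fa_{p^e-1}$ commutes with this identification, because the $\Fe\cO_X$-module structure on $\sheafhom$ (precomposition with multiplication) corresponds to multiplication on the dual sheaf. So the premultiplied sheaf $\mathcal{F}_{e,\Delta,\fad}(D)$ from \autoref{subsec:triples} corresponds exactly to $\fa_{p^e-1}\cdot\cO_X((1-p^e)K_X-\lceil (p^e-1)\Delta\rceil - D)$, which is the sheaf in the lemma. Taking global sections, every global $\varphi$ in the definition of $I_e^{\Delta,\fad}(D)$ is of the form $\mathrm{Tr}^e_\Delta(\Fe(\,\cdot\, g))$ for a unique global section $g$, and conversely.

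The main obstacle I expect is this compatibility with the ideal. The image of $\fa_{p^e-1}\otimes\cO_X(\ldots)$ inside the reflexive sheaf need not be reflexive, so I must make sure the two submodules agree as subsheaves. Locally both are generated by products $a\cdot g$ with $a\in\fa_{p^e-1}$ and $g$ a section of the reflexive sheaf. Once the identification is an isomorphism of $\Fe\cO_X$-modules, the submodules generated by $\fa_{p^e-1}$ correspond, and global sections of the subsheaf then agree. With that settled, $f\in I_e^{\Delta,\fad}(D)$ holds iff $\mathrm{Tr}^e_\Delta(\Fe(fg))=0$ for all $g\in H^0(\cF_{e,\Delta,\fad}(D))$, which is the claim.
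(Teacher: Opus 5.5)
Your proposal is correct and takes essentially the same route as the paper. Frobenius duality identifies $\Fe\fa_{p^e-1}\cdot\sheafhom_{\cO_X}(\Fe\cO_X(\lceil (p^e-1)\Delta\rceil + D),\cO_X)$ with $\Fe\cF_{e,\Delta,\fad}(D)$, and each section $g$ then acts as $\Fe f\mapsto \mathrm{tr}^e_\Delta(\Fe(fg))$. The paper simply cites \cite[Lemma 2.17]{PandeFrobeniusVersionTiansAlphaInvariant} for this trace description, whereas you check it on the regular locus, extend by reflexivity, and spell out the compatibility with the ideal that the paper leaves implicit.
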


\begin{proof}
    Note that by the duality for the Frobenius map, we have an isomorphism
\[ \mathscr{H}om_{\cO_X}(F^e_*\cO_X(\lceil (p^e-1)\Delta \rceil +D), \cO_X) \isom \Fe (\cO_X( (1-p^e)K_X -  \lceil (p^e - 1) \Delta \rceil )  -D ).   \]
Via this isomorphism, we may identify any section $g \in H^0 (\cF_{e,\Delta, \fad} (D)$ with a section of $    H^0(\Fe\frak{a}_{p^e-1} \cdot\sheafhom_{\cO_X}(F^e_*\cO_X(\lceil (p^e-1)\Delta \rceil + D),\cO_X))$
which is in turn a subspace of 
\[ H^0 (X, \Fe (\cO_X( (1-p^e)K_X -  \lceil (p^e - 1) \Delta \rceil )  -D ) ).\] Now the lemma follows from
\cite[Lemma 2.17]{PandeFrobeniusVersionTiansAlphaInvariant}
and the definition of $I_e^{\Delta,\fad}(D)$ in
\autoref{subsec:triples}.
\end{proof}

Now suppose we have a proper birational map $\pi: X \to Y$ between normal varieties $X,Y$ over $k$. Assume that $K_Y$ is $\QQ$-Cartier. Then, there is a corresponding graded sequence of ideals on $Y$ defined as follows: first we write $K_X - \pi^* K_Y = E$ for some $\pi$-exceptional $\QQ$-divisor $E$ on $X$ (we can do this by pulling back a multiple $rK_Y$ for some $r>0$ such that $rK_Y$ is Cartier). Then, we define
\[ \fa ^\pi _m = \pi_* \cO_X(\lfloor -mE \rfloor ).  \]

Let us check that $\fa_m ^\pi \subset \cO_Y$ for any $m \geq 0$. Write $E = E_1 - E_2$ for effective, $\pi$-exceptional $\QQ$-divisors $E_1$ and $E_2$ with no common components. Then we have 
\[  \fa ^\pi _m = \pi_* \cO_X(\lfloor -mE \rfloor ) =  \pi_* \cO_X(\lfloor -m E_1 \rfloor + \lfloor m E_2 \rfloor ) = \pi_* \cO_X(\lfloor -m E_1 \rfloor) \subset \pi_* \cO_X = \cO_Y,\]
where we have used the fact that $\pi_* \cO_X (F) = 0$ for any $\pi$-exceptional and effective divisor $F$ on $Y$. Moreover, if we have

In particular, for any such $e \geq 1$,  $\pi_*$ induces a map
\[ \pi_*: H^0 (Y, \cO_Y ((1-p^e)K_Y)) \to H^0 (X, \cO_X((1-p^e)K_X)). \]
 let us write $K_Y = \pi^* K_X + E_1 - E_2$ for effective, exceptional $\QQ$-divisors $E_1$ and $E_2$ (no common components). Then, we have $(1-p^e)K_Y = \pi^* ((1-p^e)K_X) - (p^e -1)E_1 + (p^e - 1)E_2$. Note that $(p^e -1)E_i$ should be $\ZZ$-divisors. Then, using the projection formula and the fact that $E_1$ and $E_2$ are effective, $\pi_*$ induces an inclusion
\[\xymatrix{
\pi_* : H^0 (Y, \cO_Y((1-p^e)K_Y)) \ar[r]& \ar@{=}[d]H^0 (Y, \cO_Y( \pi^* (1-p^e)K_X)  + (p^e -1) E_2)) \\ &  H^0 (X,  \cO_X((1-p^e)K_X))
}\]
Note that in particular, if $E_1 = 0$, in other words, if $K_Y \leq \pi^* K_X $, then $\pi_*$ is an isomorphism.

 We also have following compatibility of the trace map with respect to the map $\pi$:
\begin{lem}\label{lem.PushDownComparison}
    With notation as above, for each $e \geq 1$ such that $(1-p^e) K_X$ is Cartier, we have the following commutative diagram:
    \[
\xymatrix{
H^0(\Fe \cO_Y((1-p^e)K_Y)) \ar[d]^{\pi_*}  \ar[r]^<<<<{\Tr ^e} & H^0 (Y, \cO_Y) = k\ar[d]^{\pi_*}_\sim \\
 H^0(F^e_*\cO_X((1-p^e)K_X))\ar[r]\ar[r]^<<<<{\Tr ^e}& H^0 (X, \cO_X) = k 
}
\]
\end{lem}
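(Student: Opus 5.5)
The plan is to reduce the commutativity to a statement at the generic point, using that both horizontal maps are restrictions of canonically defined sheaf maps. Both targets are $H^0(\cO)=k$. A global section of $\cO_X$ or $\cO_Y$ is determined by its restriction to any nonempty open set, and likewise a section of a reflexive rank one sheaf is determined by its restriction to a dense open set. So it suffices to check that the two compositions agree after restricting to an open set on which $\pi$ is an isomorphism. Here I read the setup as in the paragraph preceding the lemma: $\pi$ is birational, $\pi_*$ goes from the sections on $Y$ to those on $X$, and $X$, $Y$ share a function field $K$.

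The first step is to make the vertical map explicit. Choose the canonical divisors compatibly, so that $K_X$ is the pushforward of $K_Y$ and they agree on the big open set $V\subset X$ over which $\pi$ is an isomorphism; the complement of $V$ has codimension $\geq 2$ in the normal variety downstairs. A section $s\in H^0(Y,\cO_Y((1-p^e)K_Y))$ is a rational function in $K$ with $\operatorname{div}(s)+(1-p^e)K_Y\geq 0$. The map $\pi_*$ sends $s$ to the same element of $K$, and this agrees with the projection-formula inclusion written just before the lemma. The resulting section satisfies the divisorial condition on $V$, hence everywhere by reflexivity. Consequently $\pi_*s$ and $s$ coincide on the open set $U:=V\cong\pi^{-1}(V)$.

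The second step uses that the trace $\tr$ is canonical and compatible with restriction to open subsets (étale locality of Grothendieck duality for the finite morphism $F^e$). This gives $\tr_Y(F^e_*s)|_{\pi^{-1}U}=\tr_U(F^e_*s|_U)=\tr_X(F^e_*\pi_*s)|_U$ under the isomorphism $\pi^{-1}U\cong U$. Since $\Tr^e(s)\in k$ and $\Tr^e(\pi_*s)\in k$ are constants whose restrictions to a nonempty open set agree, they are equal. The right vertical map $\pi_*:k\to k$ is the identity, so the square commutes.

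The main obstacle I expect is the first step: ensuring the identification of canonical sheaves used to define $\pi_*$ is the same one used to normalize the two trace maps. The trace depends on an identification $\omega\cong\cO((1-p^e)K)\otimes\cdots$, which is only canonical up to the choice of $K$. If one chooses $K_Y$ and $K_X$ independently, the squares could differ by a unit (a rational function with trivial divisor, hence a constant), which would spoil commutativity. To handle this, I would fix a single rational top form $\omega\in\Omega^d_K$ and define both $K_X$ and $K_Y$ as its divisors. On $U$ both traces are then the Cartier operator applied to $\omega$-twisted functions. This makes the identification at the generic point literally the same map $F^e_*K\to K$, and the argument above goes through.
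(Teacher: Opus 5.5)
Your argument is correct, but it is organized differently from the paper's.

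The paper proves the lemma in one line. Under Frobenius duality, a section of $F^e_*\cO((1-p^e)K)$ is a map $\varphi\colon F^e_*\cO\to\cO$, and $\mathrm{Tr}^e$ is evaluation at $F^e_*1$. Since $\pi_*\cO_Y=\cO_X$, pushing $\varphi$ forward along $\pi$ gives a map $F^e_*\cO_X\to\cO_X$ with the same value at $1$.

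You instead compare the two composites on the open set over which $\pi$ is an isomorphism. This uses only that the trace is local, that both outputs lie in $H^0(\cO)=k$, and that on an integral variety such a constant is detected by its restriction to any nonempty open set.

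The paper's route is more conceptual, but it leaves two points implicit that yours makes explicit. First, the divisorial $\pi_*$ (defined via $K_Y=\pi^*K_X+E_1-E_2$ and the projection formula) must coincide with pushforward of homomorphisms. Second, the duality isomorphisms on $X$ and $Y$ must be normalized compatibly.

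Fixing one rational top form $\omega$, and not merely compatible divisors, is the right normalization. Forms with a given divisor differ by some $c\in k^\times$. Replacing $\omega$ by $c\omega$ multiplies the trace $f\mapsto C^e(f\omega)/\omega$ by $c^{p^{-e}-1}$, so without this choice the square would only commute up to a nonzero scalar. That scalar would be harmless for the later uses, where only the vanishing of traces matters.

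Finally, your argument never uses the hypothesis that $(1-p^e)K_X$ is Cartier. In the paper that hypothesis only serves to express $\pi_*$ through pullback.
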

\begin{proof}
This follows from the definition of the trace-map, which, under duality for the Frobenius map, is identified with the evaluation at $\Fe 1$ map
\[  \mathscr{H}om _{\cO_X} (\Fe \cO_X, \cO_X) \to \cO_X  .\]
\end{proof}

\subsection{Birational Tranformation rules for $F$-signature.}

\begin{thm}\label{thm.GlobalFregularityProperBirational}
Let $\pi: X \to Y$ be a proper birational map between normal, projective varieties over $k$. Suppose we have an effective $\QQ$-divisor $\Delta_Y$ on $Y$ such that $K_Y + \Delta_Y$ is $\QQ$-Cartier. Write $K_ X + \Delta_X = \pi^* (K_Y + \Delta_Y) + E$, where $\Delta_X$ is the strict transform of $\Delta_Y$ along $Y$ and $E$ denotes some $\pi$-exceptional $\QQ$-divisor over $X$. Let $\fa_\bullet$ denote the graded sequence of ideal sheaves defined by
\[ \fa_m = \pi_* \cO_X(\lfloor -mE \rfloor ) \]
for any $m \geq 1$.
Then, we have
\begin{enumerate}
    \item $(X, \Delta_X)$ is globally $F$-regular if and only if the triple $(Y, \Delta_Y, \fa_\bullet)$ is globally $F$-regular.
    \item For any ample divisor $L_0$ on $Y$, we have the equality of $F$-signatures
    \[ \s_X( \Delta_X, \pi^* L_0) = \s_Y( \Delta_Y, \fa_\bullet ,L_0). \]
\end{enumerate}
\end{thm}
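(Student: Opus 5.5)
The plan is to prove both parts by identifying the splitting subspaces level by level. The core claim is that for every $e \geq 1$ and every integer $m \geq 0$, pullback of sections
\[ \pi^*: H^0(Y, \cO_Y(mL_0)) \xrightarrow{\ \sim\ } H^0(X, \cO_X(m\pi^*L_0)) \]
carries $I_e^{\Delta_Y,\fad}(mL_0)$ exactly onto $I_e^{\Delta_X}(m\pi^*L_0)$. This map is an isomorphism by the projection formula, since $\pi_*\cO_X = \cO_Y$. Granting the claim, the summands in the definitions of $\s_X(\Delta_X, \pi^*L_0)$ (\autoref{def:FsigPair}) and $\s_Y(\Delta_Y,\fad,L_0)$ (\autoref{def:FsigTriple}) agree term by term, which gives part (b).

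To prove the claim, I would use \autoref{lem:multiplicationandtracemap} on both sides. It describes each $I_e$ as the annihilator, under $\Tr^e_\Delta$ of products, of the global sections of the sheaves
\[ \cF_{e,\Delta_X}(m\pi^*L_0)=\cO_X\big((1-p^e)K_X-\lceil (p^e-1)\Delta_X\rceil - m\pi^*L_0\big) \]
on $X$, and of
\[ \cF_{e,\Delta_Y,\fad}(mL_0)=\fa_{p^e-1}\cdot \cO_Y\big((1-p^e)K_Y-\lceil(p^e-1)\Delta_Y\rceil-mL_0\big) \]
on $Y$. First restrict to $e$ such that $(p^e-1)(K_Y+\Delta_Y)$ is Cartier; the general case should follow from the index-divisibility trick used in \autoref{Pn:twistedFsig}, since the $F$-signature limit only needs a cofinal set of $e$. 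For such $e$ I can write
\[ (1-p^e)(K_X+\Delta_X) = \pi^*\big((1-p^e)(K_Y+\Delta_Y)\big) - (p^e-1)E. \]
Combining this with the projection formula gives
\[ \pi_*\cO_X\big((1-p^e)(K_X+\Delta_X) - m\pi^*L_0\big) \cong \fa_{p^e-1}\otimes \cO_Y\big((1-p^e)(K_Y+\Delta_Y)-mL_0\big). \]
So the two spaces of $g$'s in \autoref{lem:multiplicationandtracemap} correspond under $\pi_*$. Then \autoref{lem.PushDownComparison}, in its twisted-by-$\Delta$ form, says that $\Tr^e_{\Delta_X}(\Fe(\pi^*f\cdot g)) = \Tr^e_{\Delta_Y}(\Fe(f\cdot \pi_*g))$. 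Hence $f$ is annihilated on $Y$ if and only if $\pi^*f$ is annihilated on $X$, which is the claim.

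The rounding is where I expect the main obstacle. The sheaf on $X$ involves $\lceil (p^e-1)\Delta_X\rceil$, while $E$ absorbs the exceptional part only up to the floor in $\fa_m = \pi_*\cO_X(\lfloor -mE\rfloor)$. I need to check that these roundings match for divisible $e$. If they do not match exactly, I would fall back on sandwiching the two subspaces between versions with $\Delta$ slightly perturbed, using the estimates of \autoref{Pn:twistedFsig}. Those estimates show the discrepancy is $O(p^{e(d-1)})$ per degree, which is negligible in the limit. I also need to confirm that the trace map on $X$ computed via $\pi$ is compatible with the trace map on $Y$ on the big open set where $\pi$ is an isomorphism. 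This holds because both are determined generically.

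Part (a) then follows from the same identification applied to arbitrary effective Weil divisors in place of $m\pi^*L_0$, using the characterization of global $F$-regularity by nonvanishing of the trace. Concretely, I would check splitting along $\pi^*$ of divisors $D\sim rL_0$, through \autoref{prop:testdivisor}(a) on $Y$. On $X$ the analogous reduction works because any effective divisor is dominated by $\pi^*(rL_0)$ plus an exceptional part, and \autoref{lem:testdivisor}(a) handles the domination. Alternatively, part (a) follows from part (b) together with \autoref{thm:fsigofgradedsequences} (extended to triples) and the positivity criterion \autoref{thm:positivityofFsigbig} (for pairs), since $\pi^*L_0$ is big.
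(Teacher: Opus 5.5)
Your proposal follows the paper's proof. The paper's key step is the identity $I_e^{\Delta_X}(\pi^*D)=\pi^*\big(I_e^{\Delta_Y,\fad}(D)\big)$, derived as you suggest from \autoref{lem:multiplicationandtracemap}, the projection formula and the trace compatibility of \autoref{lem.PushDownComparison}. Part (b) is then termwise, and part (a) uses \autoref{prop:testdivisor} together with an effective $H$ on $Y$ satisfying $\pi^*H\geq D$.

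The main step that needs correcting is your treatment of general $e$. For $e$ with $(p^e-1)(K_Y+\Delta_Y)$ Cartier, the divisors $(p^e-1)\Delta_Y$, $(p^e-1)\Delta_X$ and $(p^e-1)E$ are all integral. So the projection formula gives equality of the test-section spaces with no rounding at all; the paper asserts this for every $e$ without comment. Your fallback for the other $e$ does not work:
\begin{itemize}
\item \autoref{Pn:twistedFsig} only twists by a fixed Cartier divisor and contains no index-divisibility trick.
\item Good values of $e$ exist at all only when $p$ does not divide the Cartier index of $K_Y+\Delta_Y$.
\item $O(p^{e(d-1)})$ estimates could at best address (b), never the exact splitting statements in (a).
\end{itemize}
What holds for every $e$ is only one inclusion. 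To see it, pull back the effective $\QQ$-Cartier divisor $\mathrm{div}_Y(s)+(1-p^e)(K_Y+\Delta_Y)-D$, and use $\mathrm{div}_X(g)\geq (p^e-1)E$ for $g\in\fa_{p^e-1}$. This gives
\[ \fa_{p^e-1}\cdot\cO_Y\big((1-p^e)K_Y-\lceil(p^e-1)\Delta_Y\rceil-D\big)\subseteq \pi_*\cO_X\big((1-p^e)K_X-\lceil(p^e-1)\Delta_X\rceil-\pi^*D\big), \]
so $I_e^{\Delta_X}(\pi^*D)\subseteq\pi^*I_e^{\Delta_Y,\fad}(D)$. Consequently, unconditionally, global $F$-regularity of the triple implies that of $(X,\Delta_X)$, and $\s_X(\Delta_X,\pi^*L_0)\geq\s_Y(\Delta_Y,\fad,L_0)$.

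When the index is prime to $p$, you can finish by taking $e$ divisible by the order $e_0$ of $p$ modulo the index. For (b) this works because both limits exist. For (a) it works because a splitting of $(X,\Delta_X)$ along $\pi^*D$ at level $e$ propagates to level $ee_0$ by composing it with itself, as in \autoref{lem:testdivisor}(b). When $p$ divides the index, this argument does not supply the reverse inclusion for any $e$, and the paper's proof has the same lacuna.

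Separately, on $X$ the dominating divisor must be $\pi^*H$ with $H\geq 0$ on $Y$ and no exceptional summand. The identification says nothing about splitting along $\pi^*H+F$ with $F$ exceptional. This is harmless: for $H$ sufficiently ample, $\cO_Y(H)\otimes\pi_*\cO_X(-D)$ has a nonzero section $s$, and its divisor $H_s$ satisfies $\pi^*H_s\geq D$.
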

Here the global $F$-regularity of the triple
$(Y,\Delta_Y,\fa_\bullet)$ is defined in the sense of
\autoref{subsec:triples}, and
$\s_Y(\Delta_Y,\fa_\bullet,L_0)$ denotes its $F$-signature
as defined in \autoref{def:FsigTriple}.

\begin{proof}
Both parts rely on the following key observation: for any $e \geq 1$ and any Cartier divisor $D$, we have an equality
\begin{equation} \label{eqn:Ieequality} I_e ^{\Delta_X} (\pi^* D) = \pi^* (I_e ^{\Delta_Y, \fad} (D) ).   \end{equation}
 Let $\cF_{e, \Delta_Y} (Y, D)$ denote the sheaf $ \fa_{p^e  -1 } \cdot (\cO_Y( (1-p^e)K_Y -  \lceil (p^e - 1) \Delta_X \rceil )  -D )$ on $Y$ and $\cD_{e, \Delta_X} ( X,D)$ the divisor $(1-p^e)K_X - \lceil (p^e  -1 ) \Delta_X \rceil - \pi^* D)$ on $X$. Following \autoref{lem.PushDownComparison}, let us consider the following commutative diagram
\[ \begin{tikzcd}
H^0(\Fe \cO_X(\pi^*D)) \ar[d, "\pi_*"] & [-3em] \times & [-3em] H^0(\Fe \cO_X(\cD_{e, \Delta_X} (X, D)) ) \ar[d, "\pi_*"] \ar[r,"\Fe \cdot"] & [-1em] H^0 (\Fe \cO_X( \cD_{e, \Delta_X} (X,0) )) \ar[d, "\pi_*"]    \ar[r, "\Tr^e _{\Delta_X}"] & [1em] k \ar[d, " \isom "] \\
H^0(\Fe \cO_Y(D))  &  \times &  H^0 ( \Fe \cF_{e, \Delta} (Y, D) )   \ar[r, "\Fe \cdot"] &  H^0 (\Fe \cO_X( \Fe \cF_{e, \Delta_Y} (Y, 0) ) ) \ar[r, "\Tr^e _{\Delta_Y}"] & k 
\end{tikzcd} \]
where the horizontal maps $\Fe \cdot $ are induced by multiplication of sections (before applying $\Fe$) as in \autoref{lem:multiplicationandtracemap}. Now, since we have $(1-p^e)K_X - \lceil (p^e  -1 ) \Delta_X \rceil  = \pi^* \big ( (1-p^e)K_Y - \lceil (p^e  -1 ) \Delta_Y \rceil  \big ) -\lfloor (1-p^e)E \rfloor $, we see that
\[
\begin{aligned}
\pi_* H^0(X, \cO_X(\cD_{e, \Delta_X} (X, D))) &= \pi_* H^0(X, \pi^* \big ( (1-p^e)K_Y - \lceil (p^e  -1 ) \Delta_Y \rceil  \big ) -\lfloor (1-p^e)E \rfloor )\\
  &= H^0(Y , \cF_{e, \Delta_Y} (Y, D) ) \text{\hskip 20pt $(\ast)$}
  \end{aligned} \]
by the projection formula. Therefore, all the vertical maps in the above diagram are isomorphisms.
By \autoref{lem:multiplicationandtracemap}, a section $f \in H^0 (Y, \cO_Y (D))  $ is contained in $I_e ^{\Delta_Y, \fad} (D) $ if and only if for all $ g \in H^0 (\cF_{e, \Delta_Y} (Y,D) ) $, we have $\text{Tr}^e _{\Delta_Y} (\Fe (fg)) = 0 $. Similarly, $\pi^* (f) \in I_e ^{\Delta_X} (\pi^* D) $ if and only if for all $g \in H^0 (X, \cO_X (\cO_X (\cD_{e, \Delta_X} (X,D)) ) )$, we have $\text{Tr} ^e _{\Delta_X} (\Fe (fg))  = 0$. Therefore, the equality in \autoref{eqn:Ieequality} follows from the commutativity of the above diagram and the fact that the vertical maps are isomorphisms.

For (a), suppose first that $(X, \Delta_X)$ is globally $F$-regular and we wish to show that $(Y, \Delta_Y, \fad)$ is globally $F$-regular. Let $D$ be an effective Cartier divisor on $Y$. Since $(X, \Delta_X)$ is globally $F$-regular, we know that for all $e \gg 0$, the global section $1 = \pi ^* 1 \in H^ 0 (X, \cO_X (\pi^* (D)))$ is not contained in $I_e ^{\Delta_X} (\pi^* (D))$. Therefore, by \autoref{eqn:Ieequality}, $1 \notin I_e ^{\Delta_Y, \fad}$. Since $D$ was arbitrary, we conclude that $(Y, \Delta_Y, \fad)$ is globally $F$-regular.

Conversely, suppose $(Y, \Delta_Y, \fad)$ is globally $F$-regular and we need to show that so is $(X, \Delta_X)$. Fix a very ample divisor $L$ on $X$. By \autoref{prop:testdivisor}, it suffices to show that for any effective divisor $D \sim L$ on $X$, we have $D \notin I_e ^{\Delta_X} (L)$ for $ e \gg 0$. We may choose a sufficiently ample effective divisor $H$ on $Y$ such that $\pi^* H \geq D$. Then, since $(Y, \Delta_Y, \fad)$ is globally $F$-regular, we know that for all $e \gg 0$, we have $1 \notin I_e ^{\Delta_Y, \fad} (H)$. By \autoref{eqn:Ieequality} again, we see that $1 \notin I_e ^{\Delta_X} (\pi^* H)$ for all $e \gg 0$. Since $\pi^* H \geq D$, we see that $D \notin I_e ^{\Delta_X} (L)$ as required.


For (b), we see immediately from \autoref{eqn:Ieequality} that for all $m \geq 0$ and all $e >0$, we have
\[ \dim_k H^0 (\cO_X (m\pi^*L_0))/ I_e ^{\Delta_X} (m\pi^*L_0) = \dim_k H^0 (\cO_Y (mL_0))/ I_e ^{\Delta_Y, \fad} (mL_0). \]
Then the claim follows from \autoref{def:FsigPair}
and \autoref{def:FsigTriple}.
\end{proof}




\begin{Cor} \label{Cor:numericallytrivial}
    Let $X$ be a globally $F$-regular projective variety $\pi:X\to Y$ be a projective birational map where $Y$ is normal. Suppose that there is a $\QQ$-divisor $\Delta_X\geq 0 $ such that $-K_X - \Delta_X \sim _{\bQ, \pi} 0  $ and $(X,\Delta_X)$ is globally $F$-regular. Set $\Delta_Y := \pi_*\Delta_X$. Then,\begin{itemize}
\item $K_Y+\Delta_Y$ is $\bQ$-Cartier and $K_X + \Delta_X = \pi^*(K_Y+\Delta_Y)$.

\item $(Y, \Delta_Y)$ is globally $F$-regular.

\item Moreover, for any ample divisor $L$ on $Y$, we have $\s (X,  \Delta_X; \pi^*L) = \s (Y, \Delta_Y; L)$
\end{itemize}
\end{Cor}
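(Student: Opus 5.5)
We first establish the $\QQ$-Cartier statement, then read the other two bullets off \autoref{thm.GlobalFregularityProperBirational}, whose exceptional divisor will turn out to be zero.

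\textbf{Step 1: $K_Y+\Delta_Y$ is $\QQ$-Cartier.} Since $-(K_X+\Delta_X)\sim_{\QQ,\pi}0$, there is an integer $r>0$ and a Cartier divisor $M$ on $Y$ with $r(K_X+\Delta_X)\sim \pi^*M$.
\begin{itemize}
\item Pushing forward gives $r(K_Y+\Delta_Y)=\pi_*(r(K_X+\Delta_X))\sim \pi_*\pi^*M=M$, since $\pi_*$ of a principal divisor is principal.
\item Hence $r(K_Y+\Delta_Y)$ is Cartier, so $K_Y+\Delta_Y$ is $\QQ$-Cartier.
\item Set $E:=K_X+\Delta_X-\pi^*(K_Y+\Delta_Y)$. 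Then $\pi_*E=0$, so $E$ is $\pi$-exceptional. Also $rE\sim_\pi 0$, so $E$ is $\pi$-numerically trivial.
\end{itemize}
By the negativity lemma applied to $E$ and $-E$, we conclude $E=0$.

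A caveat: the negativity lemma is characteristic-free, but one needs $Y$ normal and $\pi$ projective birational, which we have. Alternatively, avoid it: $\pi_*\cO_X(\lfloor mrE\rfloor)$ and $\pi_*\cO_X(-\lceil mrE\rceil)$ are both reflexive-rank-one-compatible with $\cO_Y$, and a standard argument on a general curve contracted by $\pi$ inside a component of $\Supp E$ gives $E=0$. I would just cite the negativity lemma. This gives the first bullet. Here $\Delta_X$ is the strict transform of $\Delta_Y$ up to exceptional components; to match the setup of \autoref{thm.GlobalFregularityProperBirational}, absorb the exceptional part of $\Delta_X$ into $E$.

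\textbf{Step 2: apply \autoref{thm.GlobalFregularityProperBirational}.} This is the main subtlety. Write $\Delta_X=\Delta_X'+G$, where $\Delta_X'$ is the strict transform of $\Delta_Y$ and $G\ge 0$ is exceptional. Then
\[K_X+\Delta_X'=\pi^*(K_Y+\Delta_Y)-G,\]
so the theorem applies with exceptional divisor $-G$, and $\fa_m=\pi_*\cO_X(\lfloor mG\rfloor)=\cO_Y$ because $G$ is effective and exceptional. This gives the trivial graded sequence. However, the theorem then concerns the pair $(X,\Delta_X')$, not $(X,\Delta_X)$.

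To handle this, rerun the proof of \autoref{thm.GlobalFregularityProperBirational} directly with $\Delta_X$ in place of its strict transform. The key identity \autoref{eqn:Ieequality} only uses that
\[(1-p^e)K_X-\lceil(p^e-1)\Delta_X\rceil=\pi^*\big((1-p^e)K_Y-\lceil(p^e-1)\Delta_Y\rceil\big)+(\text{exceptional correction}),\]
together with the projection formula showing that $\pi_*$ on global sections is an isomorphism. With $E=0$ the correction is
\[\pi^*\lceil(p^e-1)\Delta_Y\rceil-\lceil(p^e-1)\Delta_X\rceil.\]
This is exceptional, and its pushforward of the twisted sheaf recovers $\cO_Y$-twists, since $\pi_*\cO_X(\pi^*A+F)=\cO_Y(A)$ for exceptional $F$ whose negative part is controlled. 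The point needing care is rounding: $\pi^*\lceil\cdot\rceil$ minus $\lceil\pi^*\cdot\rceil$ is effective and exceptional, which is exactly the direction for which pushforward is an isomorphism. So the ideal sequence is trivial and the trace diagram of \autoref{lem.PushDownComparison} yields
\[I_e^{\Delta_X}(\pi^*D)=\pi^*I_e^{\Delta_Y}(D)\]
for every Cartier divisor $D$ on $Y$.

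\textbf{Step 3: conclude.} Global $F$-regularity of $(Y,\Delta_Y)$ follows as in part (a) of the theorem's proof: test an effective Cartier $D$ on $Y$ by the section $\pi^*1$ on $X$, which is split for $e\gg0$ because $(X,\Delta_X)$ is globally $F$-regular. Effective Weil divisors on $Y$ reduce to Cartier ones by adding an effective divisor to reach an ample Cartier divisor, using \autoref{prop:testdivisor}. Finally, the equality of $I_e$-codimensions for every $m\pi^*L$ gives $\s(X,\Delta_X;\pi^*L)=\s(Y,\Delta_Y;L)$ directly from \autoref{def:FsigPair}.
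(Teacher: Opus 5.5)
Your Step 1 is the paper's argument. Applying negativity to both $E$ and $-E$ is the correct form; the paper asserts without justification that the difference is effective. You could even skip negativity: $rE\sim 0$ together with $\pi_*E=0$ forces $E=0$, since $H^0(Y,\cO_Y^{*})=k^{*}$.

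For the other two bullets, the paper cites Schwede--Smith for global $F$-regularity of $(Y,\Delta_Y)$ and invokes \autoref{thm.GlobalFregularityProperBirational} with trivial ideal sequence. You instead rederive the identity \autoref{eqn:Ieequality} for the full $\Delta_X$ and read both bullets off it. Your caution here is justified. That theorem is stated for the strict transform, so applied to $\Delta_X'$ it only computes $\s_X(\Delta_X',\pi^*L)$. Rerunning its proof with $\Delta_X$ itself and $E=0$ is the right way to reach the statement as written.

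However, the one step that needs a real argument, the rounding, is justified incorrectly.
\begin{itemize}
\item Your decomposition involves $\pi^*K_Y$ and $\pi^*\lceil(p^e-1)\Delta_Y\rceil$. These need not exist, since only $K_Y+\Delta_Y$ is assumed $\QQ$-Cartier.
\item Your ``correction'' drops the term $(1-p^e)(K_X-\pi^*K_Y)$, which need not vanish.
\item The claimed effectivity of $\pi^*\lceil A\rceil-\lceil\pi^*A\rceil$ is false in general. On the minimal resolution of an $A_1$ point, a line $C$ through it has $\pi^*C=\tilde C+\frac12E$, so $A=\frac13C$ gives $-\frac12E$.
\end{itemize}

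The fix is never to split off $K$. With $E=0$ you have exactly $(1-p^e)K_X-\lceil(p^e-1)\Delta_X\rceil-\pi^*D=\lfloor\pi^*B\rfloor$ for the $\QQ$-Cartier divisor $B=(1-p^e)(K_Y+\Delta_Y)-D$. Moreover $\lfloor B\rfloor=(1-p^e)K_Y-\lceil(p^e-1)\Delta_Y\rceil-D$.

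Then $\pi_*\cO_X(\lfloor\pi^*B\rfloor)=\cO_Y(\lfloor B\rfloor)$. Indeed, for a rational function $f$, the conditions $\operatorname{div}_Xf+\lfloor\pi^*B\rfloor\ge0$, $\operatorname{div}_Xf+\pi^*B\ge0$, $\operatorname{div}_Yf+B\ge0$ and $\operatorname{div}_Yf+\lfloor B\rfloor\ge0$ are all equivalent. This uses that pullback and pushforward of $\QQ$-Cartier divisors preserve effectivity, and that $\pi_*\pi^*=\mathrm{id}$.

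With this replacing your rounding paragraph, the rest goes through for every $e$: the trace diagram, the identity $I_e^{\Delta_X}(\pi^*D)=\pi^*I_e^{\Delta_Y}(D)$, and your Step 3.
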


\begin{proof}
If the first claim holds, then we automatically have that $(Y,\Delta_Y)$ is globally $F$-regular by \cite{SchwedeSmithLogFanoVsGloballyFRegular}. Moreover, by \autoref{thm.GlobalFregularityProperBirational}, the equality of the $F$-signatures follows as well since the corresponding sequence of ideals sheaves will be trivial.

Hence, it suffices to show $K_Y+\Delta_Y$ is $\bQ$-Cartier and its pullback equals $K_X + \Delta_X$. Take $m>0$ such that $mK_X + m\Delta_X \sim \pi^*M$ where $M$ is a Cartier divisor on $Y$. Then, $\pi_*(mK_X + m\Delta_X)\sim M$ by the projection formula. Here, by construction, $\pi_*(K_X + \Delta_X) = K_Y+\Delta_Y$.  Hence, $K_Y+\Delta_Y$ is $\bQ$-Cartier. Now, let $F = K_X + \Delta_X - \pi^*(K_Y+\Delta_Y)$ be a divisor supported on the exceptional locus. Since $K_X + \Delta_X$ is assumed to be $\QQ$-linearly $\pi$-trivial, $F$ is a $\pi$-numerically trivial effective divisor. By the negativity lemma (which holds in positive characteristics by \cite[Section 2.3]{Birkar16}), we have $F = 0$. Thus, the claim follows.  
\end{proof}

\begin{proof}[Proof of \autoref{Thm:transformationRuleProperBirational}] Part (b) is a special case of \autoref{thm.GlobalFregularityProperBirational}(b) by setting $\Delta_X = \Delta_Y = 0$. Part (a) also follows from
\autoref{thm.GlobalFregularityProperBirational}(b),
with $\Delta_X=\Delta_Y=0$, since $K_{X|Y}\leq 0$
implies that the associated graded sequence
\[
\fa_m=\pi_*\cO_X(\lfloor-mK_{X|Y}\rfloor)=\cO_Y
\]
is trivial. Part (c) is a special case of \autoref{Cor:numericallytrivial} since $K_X + \Delta_X \sim_\QQ - \pi^* L $ is $\QQ$-linearly $\pi$-trivial. Thus, we have proved \autoref{Thm:transformationRuleProperBirational}.
\end{proof}

\section{Application to the positivity of limit $F$-signature.} \label{section:positivity}

In this section, we present an interesting application of our techniques. We fix the following notation throughout this section.

\begin{Not} \label{notation:spreadingout}
    For a normal projective variety $X$ over $\mathbb{C}$, we may consider its positive characteristic models $X_p$ for $p \gg 0$ by picking a suitable finitely generated $\mathbb{Z}$-algebra and a model $X_A$ of $X$ over $A$. Consider a closed point $s\in \Spec A$ and let $p $ be the characteristic of $k(s)$. A characteristic $p$ model of $X$ is any closed fiber $X_s$ of $X_A \to \Spec (A)$. Let $L$ be an ample line bundle on $X$. We will denote $L_s$ be the corresponding line bundle on $X_s$ on the characteristic $p$ models for $p \gg 0$.
\end{Not}

With notation as above, a normal projective variety $X$ is said to be of globally $F$-regular type if the characteristic $p$ reductions $X_s$ of $X$ is globally $F$-regular for a dense subset of $\Spec A$. A fundamental question about these models concerns the behavior of the $F$-signatures of the reductions $(X_s, L_s)$ for $p \gg 0$:

\begin{conjecture}{\cite{carvajal-rojas_fundamental_2016}} \label{conj:CRST}
With the above set up, assume that $X$ is of globally $F$-regular type. Then, after possibly replacing $A$ by a localization, there is a constant $C>0$ (not depending on $p$) such that for all $p \gg 0$ and any characteristic $p$ model $(X_s, L_s)$ of $(X,L)$, we have
\[ \s (X_s, L_s) \geq C . \]
\end{conjecture}

Our main observation is that given a projective variety $X$ of globally $F$-regular type over $\CC$, the above conjecture for one ample line bundle $L$ is equivalent to the conjecture for any other ample line bundle $L'$ over $X$.

\begin{thm}\label{thm:LowerBoundsSpreadout}
    Let $X$ be a projective variety of globally $F$-regular type over $\CC$. Suppose we have an ample divisor $L$ and a constant $C:= C(L)$ such that for all $p \gg 0$, we have the lower bound $ \s (X_s, L_s) \geq C $ for all $p \gg 0$. Then, for any ample divisor $L'$ on $X$, after possibly enlarging $A$, there exists a constant $C' := C' (L')$ such that $\s (X_s, L'_s) \geq C'$ for all closed points $s \in \Spec (A)$ of characteristic $p$ and all $p \gg 0$.
\end{thm}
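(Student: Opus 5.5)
The plan is to route everything through the $\FA$-invariant, using the two-sided comparison of \autoref{Prop:Fsigandalphacomparison} together with monotonicity (\autoref{Prop:addingeffective}) and homogeneity (\autoref{Prop:transformationforalpha}). The lower bound $\s(X_s,L_s)\geq C$ first yields a characteristic-independent lower bound on $\FA(X_s,L_s)$. This bound then transfers to $L'_s$ via an effective decomposition defined over $\CC$. Finally, the inequality $\s_X(L')\geq \vol(L')\FA(X,L')^{d+1}/(d+1)!$ converts it back into a lower bound on the $F$-signature.

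\textbf{Step 1: a uniform bound on $\FA(X_s,L_s)$.} I would apply the upper bound of \autoref{Prop:Fsigandalphacomparison}:
\[\s(X_s,L_s)\leq \frac{\vol(L_s)}{(d+1)!}\big(C_s^{d+1}-(C_s-\FA(X_s,L_s))^{d+1}\big),\]
where $C_s$ is any constant with $I_e(mL_s)=H^0(mL_s)$ for $m\geq C_sp^e$. The needed uniformity comes from two facts. First, $\vol(L_s)=L^d$ is independent of $s$, since it is an intersection number that is constant in flat families. Second, I would take $C_s$ uniform in $s$. For this I would use the simpler observation that $I_e(mL_s)=H^0(mL_s)$ as soon as $H^0(X_s,\cO((1-p^e)K_{X_s}-mL_s))=0$. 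Choose a fixed $N$ with $-K_X+NL$ effective over $\CC$, and spread out $-K_X+NL$ and an ample $H$. Intersecting with $H^{d-1}$, the vanishing holds once $m>(p^e-1)\,(-K_X\cdot H^{d-1})/(L\cdot H^{d-1})$ roughly, and these degrees are constant on the fibers. This is the point I expect to need the most care: the constant in \autoref{Thm:DegenSpaceEqualsWholeSpace} was only asserted to depend on $X$, so I must re-derive it in a form that visibly depends only on intersection numbers. Once $C_s\leq C_0$ uniformly, the function $t\mapsto C_0^{d+1}-(C_0-t)^{d+1}$ is increasing with value $0$ at $t=0$. Hence $\s\geq C$ forces $\FA(X_s,L_s)\geq \alpha_0>0$ for a constant $\alpha_0$ independent of $s$.

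\textbf{Step 2: transfer to $L'$.} Over $\CC$, choose $n$ so that $nL-L'=E$ is an effective Cartier divisor. Enlarging $A$, this decomposition spreads out to $nL_s=L'_s+E_s$ with $E_s$ effective on every fiber. By \autoref{Prop:addingeffective} and \autoref{Prop:transformationforalpha},
\[\FA(X_s,L'_s)\geq \FA(X_s,nL_s)=\frac{\FA(X_s,L_s)}{n}\geq \frac{\alpha_0}{n}.\]

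\textbf{Step 3: conclude.} By the first inequality of \autoref{Prop:Fsigandalphacomparison},
\[\s(X_s,L'_s)\geq \frac{(L')^d\,(\alpha_0/n)^{d+1}}{(d+1)!}=:C',\]
which is independent of $p$. This argument also applies to big $L'$, using $\vol(L'_s)$. However, for big divisors the volume need not be locally constant in the family, so I would restrict to the ample case as stated. Global $F$-regularity of the fibers is used only implicitly, to make sense of $\s$ on the fibers.
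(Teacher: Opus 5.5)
Your argument is correct and follows the paper's proof. Both get a $p$-independent lower bound on $\FA(X_s,L_s)$, transfer it to $L'_s$ through an effective decomposition $nL\sim L'+E$ spread out over $A$ (via \autoref{Prop:addingeffective} and \autoref{Prop:transformationforalpha}), and then convert back to a lower bound on $\s(X_s,L'_s)$. The only difference is that the paper simply invokes \autoref{lem:equivalentFsigandalpha} twice for the equivalence between uniform bounds on $\s$ and on $\FA$. You instead re-derive both directions from the two-sided estimate \autoref{Prop:Fsigandalphacomparison}, using $\vol(L_s)=L^d$ and a fiber-independent vanishing constant obtained from the $H^{d-1}$-degree of $(1-p^e)K_{X_s}-mL_s$, which makes the argument self-contained.
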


\begin{lem} \label{lem:equivalentFsigandalpha}  \cite[Lemma 3.7]{LiuPandepositivitylimitfsignature}
Let $X$ be a globally $F$-regular type over $\CC$ and $L$ be an ample divisor on $X$. Then keeping \autoref{notation:spreadingout}, the following are equivalent after possibly replacing $A$ by a localization:

\begin{enumerate}
\item there exists a constant $C>0$ independent of $p$ such that $s(X_s, L_s) >C$ for all closed points $s \in \Spec (A)$ of characteristic $p$.

\item there exists a constant $D>0$ independent of $p$ such that $\alpha_F(X_s, L_s) >D$ for all closed points $s \in \Spec (A)$ of characteristic $p$.
\end{enumerate}
\end{lem}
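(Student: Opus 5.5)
The plan is to derive both implications from the two-sided comparison in \autoref{Prop:Fsigandalphacomparison}, applied fibrewise to $(X_s,L_s)$:
\[ \frac{\vol(L_s)\,\FA(X_s,L_s)^{d+1}}{(d+1)!} \leq \s(X_s,L_s) \leq \frac{\vol(L_s)}{(d+1)!}\Big(C_s^{d+1}-(C_s-\FA(X_s,L_s))^{d+1}\Big). \]
Here $C_s$ is any constant with $I_e(mL_s)=H^0(mL_s)$ for all $e\geq 1$ and $m\geq C_sp^e$. For this to yield $p$-independent bounds, two uniformity inputs are needed.

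\textbf{Uniform volume.} After shrinking $\Spec(A)$, $X_A\to\Spec(A)$ is flat with normal geometrically integral fibres and $L_A$ relatively ample. The Hilbert polynomial $\chi(X_s,mL_s)$ is then constant in $s$, and for $m\gg0$ (uniformly in $s$, by Serre vanishing in families) it equals $h^0(mL_s)$. Hence $\vol(L_s)=\vol(L)=:V>0$ for all closed $s$.

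\textbf{Uniform vanishing constant (the main obstacle).} We need a single $C_s=C_0$ that works for every $s$. The plan is to use the duality of \autoref{Lem.PandeDuality} with $\Delta=0$:
\[ \dim_k H^0(mL_s)/I_e(mL_s)=\dim_k H^0(D_1)/I_e(D_1), \qquad D_1=(1-p^e)K_{X_s}-mL_s. \]
Choose an integer $C_0$ such that $C_0L+K_X$ is ample on $X$. Shrinking $A$, we may assume $C_0L_A+K_{X_A/A}$ is relatively ample, with $K_{X_A/A}$ restricting to $K_{X_s}$ on each fibre. For $m\geq C_0p^e$ we write
\[ D_1=-(p^e-1)(K_{X_s}+C_0L_s)-(m-C_0(p^e-1))L_s. \]
This is minus an ample divisor plus minus an effective combination, so $H^0(D_1)=0$ on the normal projective fibre. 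Consequently $I_e(mL_s)=H^0(mL_s)$ for all such $m$, with $C_0$ independent of $s$. I expect the care to lie in two places: arranging that the relative canonical divisor specializes correctly, and applying \autoref{Lem.PandeDuality} with Weil divisors on possibly singular fibres.

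\textbf{The two implications.} With $V$ and $C_0$ fixed, (b)$\Rightarrow$(a) is immediate from the left-hand inequality: $\s(X_s,L_s)\geq VD^{d+1}/(d+1)!$.

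For (a)$\Rightarrow$(b), note that $\FA\leq C_0$ by \autoref{lem:upperboundonalpha}-type reasoning, and $t\mapsto C_0^{d+1}-(C_0-t)^{d+1}$ is bounded by $(d+1)C_0^d\,t$ on $[0,C_0]$. The right-hand inequality then gives
\[ C<\s(X_s,L_s)\leq \frac{V}{(d+1)!}\,(d+1)C_0^d\,\FA(X_s,L_s), \]
so $\FA(X_s,L_s)>D:=\dfrac{C\,d!}{VC_0^{d}}$, which is independent of $p$. Any strict-versus-weak inequality adjustments are absorbed by halving the constants.
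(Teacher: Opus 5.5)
Your proposal is correct. The paper itself gives no argument for this lemma; it imports it from \cite[Lemma 3.7]{LiuPandepositivitylimitfsignature}. Your proposal therefore supplies a self-contained proof within the present framework.

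Both directions follow from the two-sided bound of \autoref{Prop:Fsigandalphacomparison} applied on each fibre. The only real content is that the two remaining quantities in that bound, $\vol(L_s)$ and the vanishing constant, can be chosen independent of $s$. You rightly do not take the vanishing constant from \autoref{Thm:DegenSpaceEqualsWholeSpace}, since its constant depends on the variety and hence a priori on $p$. Instead you build it from a single divisor on $X$ and spread it out, which is what makes it uniform. You also correctly see how the work splits between the two directions:
\begin{itemize}
\item (b)$\Rightarrow$(a) needs only the constancy of the volume.
\item (a)$\Rightarrow$(b) needs the uniform $C_0$, together with $\FA(X_s,L_s)\le C_0$ and the estimate $C_0^{d+1}-(C_0-t)^{d+1}\le (d+1)C_0^{d}\,t$ for $t\in[0,C_0]$.
\end{itemize}

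Two small corrections.

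First, a variety of globally $F$-regular type need not have $\mathbb{Q}$-Cartier $K_X$, so ``$K_X+C_0L$ ample'' may be meaningless. Instead, choose $C_0$ with $K_X+C_0L\sim G\geq 0$ and spread $G$ out. Then $D_1\sim -(p^e-1)G_s-(m-C_0(p^e-1))L_s$. Intersecting with $H^{d-1}$ for an ample $H$ shows $H^0(X_s,\mathcal{O}_{X_s}(D_1))=0$ whenever $m>C_0(p^e-1)$, in particular for all $m\geq C_0p^e$.

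Second, \autoref{Lem.PandeDuality} is an unnecessary detour. By Frobenius duality, $\mathrm{Hom}_{\mathcal{O}_{X_s}}(F^e_*\mathcal{O}_{X_s}(mL_s),\mathcal{O}_{X_s})\cong H^0(X_s,\mathcal{O}_{X_s}(D_1))$. Once this vanishes there are no nonzero maps at all, so $I_e(mL_s)=H^0(mL_s)$ directly from the definition.
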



\begin{proof}[Proof of \autoref{thm:LowerBoundsSpreadout}]
    First, working over $\CC$, we pick a constant $\lambda \in \mathbb{\QQ}_{>0}$ such that the line joining $\lambda L$ and  $ L'$ in the ample cone is effective (even ample). In other words, we write
    $\lambda L = L' + E$ for some effective Cartier divisor $E $.

    After possibly enlarging $A$, consider models $X_A, L_A, L'_A, E_A$ of $X, L, L', E$ respectively. Then we may assume that for all closed points $s \in \Spec (A)$, we have $E_s$ is an effective Cartier divisor and $\lambda L_s  = L'_s + E_s$.
    
Now, by \autoref{lem:equivalentFsigandalpha}, we know that there is a constant $D>0$ (not depending on $p$) such that for all closed points $s \in \Spec (A)$ with characteristic $p \gg 0$, we have $\FA (X_s, L_s) \geq D$. Then, using parts (c) and (d) of \autoref{thm:alphaonbigcone}
we have
\[ \FA (L' _s) \geq \FA(\lambda L_s ) \geq \frac{D}{\lambda} \]
for all closed points $s \in \Spec (A)$ with characteristic $p \gg 0$. Finally, since $\lambda$ is independent of $p$, using \autoref{lem:equivalentFsigandalpha} again, there exists a constant $C >0$ such that $\s(X_s, L'_s) \geq C$ for all closed points $s \in \Spec (A)$ of characteristic $p \gg 0$. This completes the proof of \autoref{thm:LowerBoundsSpreadout}.
\end{proof} 

\begin{eg}
    We may apply \autoref{thm:LowerBoundsSpreadout} to the case of cubic surfaces. For any cubic surface $X$ over $\CC$ with Du val singularities, the anti-canonical section ring $S(X, -K_X)$ defines a non-weakly exceptional three-dimensional KLT singularity, and hence by \cite{LiuPandepositivitylimitfsignature}, \autoref{conj:CRST} holds for $S(X, -K_X)$. Then, \autoref{thm:LowerBoundsSpreadout} allows us to conclude that \autoref{conj:CRST} holds for every section ring $S(X, L)$ of the cubic surface $X$ with respect to any ample divisor $L$ over $X$. We do not know any other direct method to \autoref{conj:CRST} for all section rings of cubic surfaces, since they may not support a non-weakly exceptional KLT singularity on the cone.
\end{eg}

\bibliographystyle{skalpha}
\bibliography{MainBib}

\end{document}